\definecolor{darkgreen}{rgb}{0.0, 0.7, 0.0}
\definecolor{darkpurple}{rgb}{0.7, 0.0, 0.7}
\def\lbr{\llbracket}
\def\rbr{\rrbracket}
\newenvironment{pf}{\proof[\proofname]}{\endproof}
\newenvironment{pf*}[1]{\proof[#1]}{\endproof}
\newtheorem{thm}{Theorem}%[section]
\newtheorem{cor}[thm]{Corollary}
\newtheorem{prop}[thm]{Proposition}
\newtheorem{conj}[thm]{Conjecture}
\theoremstyle{definition}
\newtheorem{defn}[thm]{Definition}
\newtheorem{rem}[thm]{Remark}
\newtheorem{nota}[thm]{Notation}
\newtheorem{example}[thm]{Example}
\numberwithin{equation}{section}
\newcommand{\thmref}[1]{Theorem~\ref{#1}}
\newcommand{\propref}[1]{Proposition~\ref{#1}}
\newcommand{\corref}[1]{Corollary~\ref{#1}}
\newcommand{\defref}[1]{Definition~\ref{#1}}
\newcommand{\remref}[1]{Remark~\ref{#1}}
\newcommand{\conjref}[1]{Conjecture~\ref{#1}}
\newcommand{\C}{{\Bbb C}}
\newcommand{\Z}{{\Bbb Z}}
\newcommand{\Q}{{\Bbb Q}}
\newcommand{\R}{{\Bbb R}}
\newcommand{\D}{{\mathcal D}}
\newcommand{\Coeff}{\mathop{\text{\rm Coeff}}}
\def\oo{{\cal O}}
\def\P{{\mathbb P}}
\def\<{\langle}
\def\>{\rangle}
\def\FD{{\mathbf{FD}}}
\def\trop{{\text{trop}}}
\def\R{{\mathbb R}}
\def\Q{{\mathbb Q}}
\def\P{{\mathbb P}}
\def\Z{{\mathbb Z}}
\def\C{{\mathbb C}}
\def\oo{{\mathcal O}}
\newcommand{\sgn}{\mathop{\text{\rm sgn}}}
\def\={\;=\;}
\def\+{\,+\,} \def\-{\,-\,}
\def\be{\begin{equation}}
\def\ee{\end{equation}}
\def\bes{\begin{equation*}}
\def\ees{\end{equation*}}
\def\mult{\text{mult}}
\def\minv{{\rm minv}}
\def\maxv{{\rm maxv}}
\def\ext{{\rm ext}}
\begin{document}
\title[Refined node polynomials via long edge graphs]{Refined node polynomials via long edge graphs}

\author{Lothar G\"ottsche \and Benjamin Kikwai}

\date\today

\begin{abstract} The generating functions of the Severi degrees for sufficiently ample line bundles on algebraic surfaces  are multiplicative in the
topological invariants of the surface and the line bundle. Recently new proofs of this fact were given for toric surfaces by Block, Colley, Kennedy and Liu, Osserman, using tropical geometry and in particular the   combinatorial tool of 
long-edged graphs. In the first part of this paper these results are for $\P^2$ and rational ruled surfaces generalised to refined Severi degrees.
In the second part of the paper we give a number of mostly conjectural generalisations of this result to singular surfaces, and curves with prescribed multiple points. 
%Roughly the idea is that for every condition imposed on the curves there is a power series, and the generating function for the counting of the curves is given by multiplying the 
%power series corresponding to all conditions imposed. In many cases we conjecture these power series to be modular forms and theta functions.
	\end{abstract}

\maketitle
\section{Introduction}

The  Severi degree $n^{d,\delta}$ is the number of $\delta$-nodal degree $d$ curves in the projective plane $\P^2$ through 
$d(d+3)/2-\delta$ general points. More  generally for a pair $(S,L)$ of a complex projective surface a line bundle on $S$, the 
 Severi degree $n^{(S,L),\delta}$ counts the number of $\delta$-nodal curves in the linear system $|L|$ passing through $\dim|L|-\delta$ general points. 
In \cite{dFI} is was conjectured that there are polynomials   $n_{\delta}(d)$ in $d$, called {\em node polynomials}, such  that $n^{d,\delta}=n_\delta(d)$, for  $d$ sufficiently large with respect to $\delta$.
In \cite{G} it was conjectured that there are universal polynomials
$t_{\delta}(x,y,z,w)$, such that  for $L$ sufficiently ample with respect to $\delta$, $n^{(S,L),\delta}$ is  obtained by substituting the intersection numbers
$L^2$, $LK_S$, $K_S^2$, $\chi(\oo_S)$: writing $n_\delta{(S,L)}:=t_\delta(L^2, LK_S,K_S^2,\chi(\oo_S))$ we should have 
$n_\delta(S,L)=n^{(S,L),\delta}$. 
The conjectures of \cite{G} furthermore express the generating functions
$$n(d):=\sum_{\delta\ge 0} n_\delta(d)t^\delta,\qquad n(S,L):=\sum_{\delta\ge 0} n_\delta(S,L)t^\delta$$ in terms of some universal power series.
$n(S,L)$ is multiplicative in the parameters, i.e. 
\begin{equation}\label{prodSL}
n(S,L)=A_1(t)^{L^2}A_2(t)^{LK_S}A_3(t)^{K_S^2}A_4(t)^{\chi(\oo_S)},
\end{equation} for some power series 
$A_i(t)\in \Q[[t]]$, and thus in particular
\begin{equation}\label{prodP2}
n(d)=A_1(t)^{d^2}A_2(t)^{-3d}A_3(t)^{9}A_4(t).
\end{equation}
Furthermore explicit formulas for $A_1(t)$ and $A_4(t)$ are given in terms of modular forms.

We will call \eqref{prodSL} and \eqref{prodP2} the {\it multiplicativity }of $n(S,L)$ and $n(d)$.
The Severi degrees of $\P^2$ and toric surfaces can be computed via tropical geometry, by the Mikhalkin correspondence theorem \cite{Mik}.
This was used in \cite{FM} to prove the existence of the node polynomials $n_\delta(d)$, using Floor diagrams which are combinatorial devices for encoding tropical curves.
The conjecture of \cite{G} was proven in \cite{Tz}, \cite{KST}, using the methods of complex geometry. 
 In \cite{BCK} and \cite{L} the Severi degrees $n^{d,\delta}$ were studied using long edge graphs, a modification of floor diagrams, giving  an alternative proof for the
 multiplicativity of the generating function $n(d)$.  This is done by taking the formal logarithm.
  $$Q(d):=\log(n(d))=\sum_{\delta\ge 1} Q_\delta(d) t^\delta.$$
 The multiplicativity for $n(d)$ is equivalent to the statement that $Q_\delta(d)$ is a polynomial of degree $2$ in $d$ for all $\delta$. This is proven in \cite{BCK} and \cite{L}, giving the first purely combinatorial proofs of  \eqref{prodP2}.
 In \cite{LO} this result is generalized to a large class of toric surfaces, and a generalisation is given to toric surfaces with rational singularities.
 This note tries to  extend these results to the refined Severi degrees defined in \cite{GS} and \cite{BG} and thus also to the Welschinger numbers.
  
The Welschinger numbers $W^{d,\delta}$ count $\delta$-nodal degree $d$ real curves  in $\P^2$ through $d(d+3)/2-\delta$ real points with suitable signs, and 
 $W^{(S,L),\delta}$ counts real $\delta$-nodal curves in the linear system $|L|$ on a real algebraic surface $S$ through a configuration of $\dim |L|-\delta$ real points.
 They are closely related to to the Welschinger invariants, deformation invariants defined in genus $0$.
 The Welschinger numbers depend in general on the point configuration, but in \cite{Mik} it is shown that, for a so called {\em subtropical} configuration of points, they coincide with the tropical Welschinger invariants $W^{\trop}_{d,\delta}$, $W^\trop_{(S,L),\delta}$, defined via tropical geometry (and these are independent of the tropical configuration of points). 
 In future we will assume that we are dealing with a subtropical configuation of points .
 
 In \cite{GS} and \cite{BG} refined Severi degrees $N^{d,\delta}(y)$, and $N^{(S,L),\delta}(y)$ for toric surfaces are introduced via tropical geometry. 
 These  are symmetric Laurent polynomials 
 in a variable $y$,  interpolating between the Severi degrees  and the Welschinger numbers, i.e.  $N^{(S,L),\delta}(1)=n^{(S,L),\delta}$, $N^{(S,L),\delta}(-1)=W^{(S,L),\delta}$.
 In \cite{GS}  analogues of the conjectures of \cite{G} are formulated for the refined Severi degrees. 
 In particular for $\delta\le 2d-2$  the  $N^{d,\delta}(y)$ should be given by refined node polynomials $N_\delta(d;y)\in \Q[d,y^{\pm1}]$.
 Similarly for pairs $(S,L)$ of a  smooth toric surface and a $\delta$-very ample toric line bundle the conjectures say
 $N^{(S,L),\delta}(y)=N_\delta((S,L);y)$, for some polynomial in $N_\delta((S,L);y)$ in  $L^2, LK_S, K_S^2, \chi(\oo_S)$. In the case of $\P^2$, $\P(1,1,m)$ or a Hirzebruch surface $\Sigma_m$, 
 these conjectures are (with  weaker bounds) proven in \cite[Thm.~4.2]{BG}.
% \begin{LG} This is not quite true. Put the correct statement.
 %\end{LG}

 We  introduce  generating functions for the refined node polynomials.
 Let 
 $$N(d)(y,t):=\sum_{\delta\ge 0} N_\delta(d;y)t^\delta, \qquad N(S,L)(y,t):=\sum_{\delta\ge 0} N_\delta (S,L;y)t^\delta.$$
 In \cite{GS} it is again conjectured that  $N(S,L)(y,t)$  is multiplicative.
 
 \begin{conj}\label{RefMult} \cite{GS}
 There exist power series $A_i(y,t)\in \Q[y^{\pm 1}][[t]], \ i=1,2,3,4$, such that for all pairs $(S,L)$ of a smooth toric surface and a toric line bundle we have
 \begin{equation}\label{prodSLy}
 \begin{split}
N(S,L)(y,t)=A_1(y,t)^{L^2}A_2(y,t)^{LK_S}A_3(t)^{K_S^2}A_4(t)^{\chi(\oo_S)},\\
N(d)(y,t)=A_1(y,t)^{d^2}A_2(y,t)^{-3d}A_3(y,t)^{9}A_4(y,t).
\end{split}
\end{equation}
\end{conj}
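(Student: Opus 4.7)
The plan is to adapt the long-edge graph arguments of \cite{BCK}, \cite{L}, and \cite{LO} to the refined setting, concentrating on the $\P^2$ and rational ruled surface cases treated in the first part of this paper. The identities in \eqref{prodSLy} are equivalent, after taking the formal logarithm
$$
Q(S,L)(y,t) := \log N(S,L)(y,t) = \sum_{\delta \ge 1} Q_\delta(S,L;y)\, t^\delta,
$$
to the assertion that each $Q_\delta(S,L;y)$ is a polynomial of total degree at most one in the four topological invariants $L^2$, $LK_S$, $K_S^2$, $\chi(\oo_S)$, with coefficients in $\Q[y^{\pm 1}]$; for $\P^2$ this specialises to the requirement that $Q_\delta(d;y)$ be of degree at most two in $d$. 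Thus the conjecture reduces to a polynomial degree bound on the log coefficients, which is exactly the form of statement proved combinatorially in the unrefined case.

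The first step is to express $N^{(S,L),\delta}(y)$ as a weighted sum over refined long-edge graphs, using the Block--G\"ottsche quantum multiplicities underlying \cite{BG} and \cite{GS} in place of the integer weights of \cite{BCK}. Taking the formal logarithm of the generating function, the exponential formula cuts the sum down to \emph{connected} refined long-edge graphs. It then suffices to show that the contribution of each fixed connected graph depends on $(S,L)$ as a polynomial of total degree at most one in $L^2$ and $LK_S$ (with $K_S^2$, $\chi(\oo_S)$ fixed along a given surface class), with coefficients in $\Q[y^{\pm 1}]$.

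The main combinatorial input, following \cite{L} and \cite{LO}, is that the number of lattice placements of a fixed connected long-edge graph inside the moment polygon of $(S,L)$ grows linearly in the lattice parameters of the polygon, hence linearly in $L^2$ and $LK_S$. The refined vertex weights are Laurent polynomials in $y$ depending only on \emph{local} edge multiplicities, so the $y$-dependence factorises vertex by vertex and is transparent to the placement count. Combining the linear placement estimate with the vertex-local refined weight then yields the desired polynomial bound for $Q_\delta$, and comparing coefficients of the four invariants across the allowed toric families pins down the universal series $A_i(y,t)$.

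The hard part will be twofold. First, one must verify that the refined long-edge contributions genuinely organise themselves into the four-parameter universal form of \eqref{prodSLy} and not merely a two-parameter one; for $\P^2$ and rational ruled surfaces the polygon geometry is simple enough to run the argument explicitly by comparing with a family of Hirzebruch surfaces, but already for weighted projective planes such as $\P(1,1,m)$ one must analyse non-smooth vertex contributions. Second, and more seriously, the universality of the $A_i(y,t)$ across all smooth toric pairs demands that the linear coefficient of the refined log contribution depends only on the surface class, which in the refined case is more delicate because the quantum multiplicities interact non-trivially with edges meeting the toric boundary. It is this second obstacle that prevents the argument from closing the full conjecture at present and is what confines the unconditional results of the paper to the $\P^2$ and rational ruled cases.
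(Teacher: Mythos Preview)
The statement under discussion is labelled a \emph{conjecture} in the paper and is not proved there in full. What the paper actually establishes (\thmref{mainthm}) is the second line of \eqref{prodSLy} for $\P^2$ --- namely that $Q^{d,\delta}(y)$ is a polynomial of degree~$2$ in $d$ for $d\ge\delta$ --- together with weaker polynomial bounds on $Q^{(\Sigma_m,cF+dH),\delta}(y)$ for rational ruled surfaces, which do \emph{not} collapse to the four-invariant form of the first line. You correctly flag this limitation in your final paragraph, so your proposal is best read as an outline of the paper's partial approach rather than a proof of the conjecture.

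At that level your outline matches the paper's strategy: pass to the logarithm, reduce to a distinguished class of long-edge graphs, and invoke linearity of the logarithmic counting function in the sequence $\beta$. Two imprecisions are worth correcting. First, the reduction is not to \emph{connected} graphs via a naive exponential formula, but to \emph{templates} (\corref{templatesum}), using the vanishing $\Phi^s_\beta(G)=0$ for non-shifted-templates \cite[Lem.~2.15]{L} and the identification of $\Phi^s_\beta$ with $\Phi_\beta$ on the relevant shift range \cite[Cor.~3.5]{L}. Second, the linear input is not a ``lattice placement count'' but \thmref{linear} (i.e.\ \cite[Thm.~3.8]{LO}): $\Phi_\beta(G)$ is a linear function of $\beta$ once $G$ is $\beta$-semiallowable. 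The proof of \thmref{mainthm} then controls the shift range over which semiallowability holds and sums the resulting linear functions in $k$.

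Your diagnosis of the obstruction is off in one important respect. The refined multiplicity $M(\Gamma)(y)=\prod_e[w(e)]_y^2$ depends only on the template $\Gamma$ and not on $\beta$ or on how edges meet the boundary; it therefore factors out of the inner sum in \corref{templatesum} entirely, and the $y$-refinement is genuinely transparent to the argument. The obstruction to the first line of \eqref{prodSLy} is thus \emph{identical} to the unrefined case and has nothing to do with quantum weights interacting with the boundary. What remains open is that the template method, as developed in \cite{L}, \cite{LO}, only controls $Q^\delta_\beta$ for the specific sequences $\beta=s(c,m,d)$ and does not by itself force the resulting polynomial in $(c,d,m)$ to lie in the $\Q[y^{\pm1}]$-span of $L^2$, $LK_S$, $K_S^2$, $\chi(\oo_S)$: \thmref{mainthm}(1) yields the seven monomials $1,c,d,cd,m,md,md^2$, whereas universal multiplicativity would impose four linear relations among their coefficients.
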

Two of these  power series are expressed in terms of Jacobi forms. One can rewrite \eqref{prodSLy} in a different way:
\begin{equation} \label{othergen} N_\delta (S,L;y)=\Coeff_{q^{L(L-K_S)/2}}\Big[DG_2(y,q)^{L(L-K_S)/2-\delta}B_1(y,q)^{K_S^2}B_2(y,q)^{LK_S}B_3(y,q)^{\chi(\oo_S)}\Big]
\end{equation} 
Here $DG_2(y,q),B_i(y,q)\in \Q[y^{\pm 1}][[q]]$, and $DG_2(y,q)$, $B_3(y,q)$ are related to theta functions.
For more details see Section 4.

%\begin{LG} Should I rather put the explicit power series. I think so?
%\end{LG}

In the first part of the current note we  adapt the method of long edge graphs and  the proofs of \cite{BCK}, \cite{L}, \cite{LO} to refined Severi degrees, to prove the multiplicativity 
also for the  $N(d)(y,t)$ and a weaker version of multiplicativity for rational ruled surfaces
(see \thmref{mainthm}).
We combine this with computer calculations of  the refined Severi degrees and the Welschinger numbers of $\P^2$ and rational ruled surfaces.
This allows to determine the refined node polynomials of $\P^2$ and rational ruled surfaces for low values $\delta$, confirming the predictions of \cite{GS}
(see \corref{refpol}),
and   extending  the  results of 
\cite{BG}.

We then extend the results and conjectures to surfaces with singularities and to curves passing through (smooth or singular) points of $S$ with higher multiplicity.
This in particular includes a conjectural generalisation of the results of \cite{LO} to the refined invariants.
The conjectural formulas  generalize \eqref{othergen}.
For every condition $c$ that we can impose on the curves at a point of $S$, we get  a power series
$D_c(y,q)\in \Q[y^{\pm 1}][[q]]$, such that the refined count of curves in $|L|$ on $S$ satisfying conditions $c_1,\ldots,c_s$ will be given by
\begin{equation} \label{othergen1} \Coeff_{q^{L(L-K_S)/2}}\Big[ B_1(y,q)^{K_S^2}B_2(y,q)^{LK_S}B_3(y,q)^{\chi(\oo_S)}\prod_{i=1}^s D_{c_i}(y,q)\Big]
\end{equation} 
The formula \eqref{othergen} is the case that the conditions imposed are to pass through $L(L-K_S)/2-\delta$ general points, in particular 
$DG_2(y,q)$ is the power series corresponding to the condition of passing through a point. 

 \section{Refined Severi degrees and long edge graphs}
 \subsection{Refined Severi degrees and Floor diagrams}
\label{subsecRefinedSeveri}

In \cite{GS}, \cite{BG} refined Severi degrees were introduced. We will briefly recall some of the results and definitions. 

A lattice  polygon $\Delta\subset \R^2$ is a polygon
with vertices of
 integer coordiates.
The lattice length of an edge $e$ of $\Delta$ is $\#( e\cap \Z^2)-1$. 
We denote by $int(\Delta)$, $\partial(\Delta)$ its interior and its boundary.
To a convex lattice polygon $\Delta$ one can associate a pair $S(\Delta)$, $L(\Delta)$ of a toric surface and 
a toric line bundle on $S(\Delta)$.  The toric surface is defined by the fan given by the outer normal vectors of $\Delta$. We have
$\dim H^0(S(\Delta),L(\Delta))=\# (\Delta\cap \Z^2)$. The arithmetic genus of a curve in $|L(\Delta)|$ is 
$g(\Delta)=\# (int(\Delta)\cap \Z^2)$.
In \cite[Def.~3.8]{BG}  refined Severi degrees $N^{\Delta,\delta}(y)$ are defined for any convex lattice polygon $\Delta$. They are a count of tropical curves in $\R^2$ satisfying suitable point conditions with multiplicities which are Laurent polynomials in $y$. We also write $N^{S(\Delta),L(\Delta),\delta}(y):=N^{\Delta,\delta}(y)$.
The $N^{\Delta,\delta}(y)$ interpolate between the Severi degrees (at $y=1$) and the tropical Welschinger numbers (at $y=-1$).

\begin{example}\label{polygons}
In the following we will be concerned only with the following lattice polygons
$\Delta_{c,m,d}=\big\{(x,y)\in (\R_{\ge 0}\bigm| y\le d; \ x+my\le md+c\big\}$, for $d,\ge 0, m\ge 0,c\ge 0$.
These   are so called $h$-transversal lattice polygons, i.e. all the slopes of the outer normal vectors of $\Delta$ are integers or $\pm \infty$.
This covers three different cases:
\begin{enumerate}
\item $d\ge 0$, $m=1$, $c=0$.  In this case $S(\Delta_{0,1,d})=\P^2$, $L(\Delta_{0,1,d})=dH$, with $H$ the hyperplane bundle on $\P^2$.
\item $d\ge 0$, $m\ge 1$, $c=0$. In this case  $S(\Delta_{0,m,d})=\P(1,1,m)$, $L(\Delta_{0,m,d})=dH$, with $H$ the hyperplane bundle on $\P(1,1,m)$ with self intersection $m$.
\item $d\ge 0$, $\ge 0, m\ge 0,c\ge 0$. 
In this case $S(\Delta_{c,m,d})$ is the rational ruled surface $\Sigma_m$. Let $E$ be the class of a section with self intersection $-m$ and $F$ the class of a fibre.
Let $H:=E+mF$. Then $L(\Delta)=cF+dH$.
\end{enumerate}
Note that in some cases the same lattice polygon corresponds to different pairs of a surface and a line bundle, but by the above the refined Severi degree only depends on $\Delta$.
\end{example}
In \cite{BG} it was also shown that the refined Severi degrees can for $h$-transversal lattice polygons be computed in terms of Floor diagrams.
Here we will not recall the definition of the refined Severi degrees as a count of tropical curves, but directly review them in terms of Floor diagrams which are very closely related to long-edge graphs. We will also restrict our attention to the lattice polygons  $\Delta_{c,m,d}$ of  Example \ref{polygons}, and thus to $\P^2$, $\P(1,1,m)$ and $\Sigma_m$.
In the following we fix $d,m,c$ and write $\Delta=\Delta_{c,m,d}$.

\begin{defn}
A \emph{$\Delta$-floor diagram} $\D$ consists of:
\begin{enumerate}
\item A graph on a vertex set $\{1, \dots, d\}$, possibly with
  multiple edges, with edges directed $i \to j$ if $i < j$. Edges $e$ carry a weight $w(e)\in \Z_{>0}$.
\item A sequence $(s_1, \dots, s_{d})$ of non-negative integers such
  that $s_1 + \cdots + s_{d} =c$.
  \item (Divergence Condition) For each vertex $j$ of $\D$, we have 
\[
\text{div}(j) \stackrel{\text{def}}{=} \sum_{ \tiny
     \begin{array}{c}
  \text{edges }e\\
j \stackrel{e}{\to} k
     \end{array}
} w(e) -   \sum_{ \tiny
     \begin{array}{c}
  \text{edges }e\\
i \stackrel{e}{\to} j
     \end{array}
} w(e)\le m + s_j.
\]
\end{enumerate}
\end{defn}

\begin{nota}
For an integer $n$ we introduce the quantum number $[n]_y$ by  
$$[n]_y=\frac{y^{n/2}-y^{-n/2}}{y^{1/2}-y^{-1/2}}=y^{{n-1}/2}+y^{{n-3}/2}+\ldots+y^{{-n+3}/2}+y^{{-n+1}/2}.$$
\end{nota}
\begin{defn}
\label{def:refined_multiplicity_FD}
We define the \emph{refined multiplicity} $\mult(\D, y)$ of a floor
diagram $\D$ as
\[
\mult(\D, y) = \prod_{\text{edges }e} \left( [w(e)]_y\right)^2. 
\]
\end{defn}
By definition  $\mult(\D, y)$ is a Laurent polynomial in $y$ with positive integral
coefficients.

\begin{defn}
\label{def:marking}
A \emph{marking} of a floor diagram $\D$ is defined by the following
four step process

{\bf Step 1:} 
For each vertex $j$ of $\D$ create $s_j$ new indistinguishable vertices and
connect them to $j$ with new edges directed towards $j$.

{\bf Step 2:} For each vertex $j$ of $\D$ create $m+s_j- div(j)$ new
indistinguishable vertices and connect them to $j$ with new edges
directed away from $j$. This makes the divergence of vertex $j$ equal
to $m$.

{\bf Step 3:} Subdivide each edge of the original floor
diagram $\D$ into two
directed edges by introducing a new
vertex for each edge. The new edges inherit their weights and orientations. Denote the
resulting graph $\widetilde{\D}$.

{\bf Step 4:} Linearly order the vertices of $\widetilde{\D}$
extending the order of the vertices of the original floor
diagram $\D$ such that, as before, each edge is directed from a
smaller vertex to a larger vertex.

The extended graph $\widetilde{\D}$ together with the linear order on
its vertices is called  a marked floor diagram or
\emph{marking} of the  floor diagram $\D$.
\end{defn}
The \emph{cogenus} of a marked floor diagram $\widetilde \D$ is $\delta(\widetilde \D):=\#(\Delta\cap \Z^2)-1-k$, where $k$ is the total number of vertices of $\widetilde \D$
(this coincides with the cogenus of the tropical curve corresponding to $\widetilde \D$, see e.g. \cite[Def.~4.2]{BG2}).
We  count marked floor diagrams up to equivalence. Two markings
$\widetilde{\D}_1$, $\widetilde{\D}_2$ of a floor diagram $\D$ are \emph{equivalent} if there exists an 
automorphism of weighted graphs which preserves the vertices of $\D$ and maps $\widetilde{\D}_1$ to
$\widetilde{\D}_2$. We denote  $\nu(\D)$ the number of
markings  $\widetilde{\D}$ of $\D$ up to equivalence.
Denote by $\FD(\Delta, \delta)$ the set of $\Delta$-floor diagrams $\D$ with cogenus $\delta$.

\begin{thm} (\cite[Thm.~5.7]{BG})
\label{Refinedfloor}
For $\Delta=\Delta_{c,m,d}$ as in Example \ref{polygons}  and $\delta \ge
0$, we have
\[
N^{\Delta, \delta}(y) = \sum_{\D \in \FD(\Delta, \delta)} \mult(\D; y)
\cdot \nu(\D).
\]
\end{thm}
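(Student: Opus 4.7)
The plan is to adapt the floor-decomposition method for tropical curves to the refined setting. The starting point is the defining description of $N^{\Delta,\delta}(y)$ as the sum, over tropical curves in $\R^2$ with Newton polygon $\Delta$ and cogenus $\delta$ passing through a generic configuration of $\#(\Delta \cap \Z^2)-1-\delta$ points, of the refined tropical multiplicity $\prod_V [m_V]_y$, where the product runs over trivalent vertices $V$ and $m_V$ is the lattice area of the triangle dual to $V$. Since this count is independent of the configuration, I would specialise to a \emph{vertically stretched} configuration $\mathcal{P}_t$ in which successive points have $y$-coordinates differing on vastly larger scales than their $x$-coordinates.

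For such a configuration, every tropical curve decomposes canonically into $d$ \emph{floors} (connected horizontal components at integer heights between the top and the bottom of $\Delta$) and \emph{elevators} (maximal vertical edges, bounded or unbounded). The associated floor diagram $\D$ has these floors as vertices $1,\ldots,d$ ordered by height, bounded elevators as weighted directed edges, and $s_j$ equal to the number of unbounded upward elevators attached to floor $j$. These $s_j$ sum to $c$ because they correspond to the top edge of $\Delta_{c,m,d}$. The divergence inequality is obtained by projecting the tropical balancing condition at each floor onto the horizontal direction: the slanted right edge of $\Delta$ contributes $m$ units of horizontal outflux per floor, the upward ends account for $s_j$ more, and the residual horizontal ends absorb $m+s_j-\text{div}(j)$. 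A marking of $\D$ then records the order in which the points of $\mathcal{P}_t$ appear along the elevators and boundary ends, matching Steps 1-4 of Definition \ref{def:marking}; equivalence of markings is exactly automorphism of the weighted graph fixing the floor vertices, so the number of tropical curves with underlying diagram $\D$ equals $\nu(\D)$.

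To conclude one matches multiplicities. The $h$-transversality of $\Delta$ together with vertical stretching forces every trivalent vertex of the tropical curve to arise as a transverse intersection of a horizontal floor edge of weight $1$ with a vertical elevator of weight $w(e)$; the dual triangle has lattice area $w(e)$, contributing a factor $[w(e)]_y$. A bounded elevator of weight $w(e)$ has exactly two such endpoints, producing $([w(e)]_y)^2$; unbounded elevators have only one such endpoint, but their quantum contribution is accounted for by the marking rather than by $\mult(\D;y)$. Assembling these contributions gives $\nu(\D)\cdot\mult(\D;y)$ for each diagram, and summing over $\D \in \FD(\Delta,\delta)$ yields the theorem. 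The main obstacle is justifying the floor decomposition itself for the polygons $\Delta_{c,m,d}$: one must show that vertical stretching forces all non-vertical bounded edges to lie within horizontal floors, identify the correct contribution of the slanted edge to the divergence inequality, and verify that no edge-weight cancellations produce hidden multiplicity factors. All three steps use $h$-transversality in an essential way, and the slanted edge is the one place where the specific geometry of $\Delta_{c,m,d}$ enters the argument.
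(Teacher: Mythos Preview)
The paper does not prove this theorem at all: it is quoted verbatim from \cite[Thm.~5.7]{BG} and used as input for the rest of Section~2. There is therefore no in-paper argument to compare your proposal against.

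That said, your sketch is essentially the argument of \cite{BG}, which in turn refines the floor-decomposition method of Brugall\'e--Mikhalkin \cite{BM} and Fomin--Mikhalkin \cite{FM} by replacing the integer vertex multiplicity $m_V$ with the quantum number $[m_V]_y$. Two small imprecisions are worth flagging. First, the floor edges are not literally horizontal; what matters is that for an $h$-transversal polygon the primitive direction of every floor edge has horizontal component $\pm 1$, so that the cross product with a vertical elevator of weight $w(e)$ gives vertex multiplicity exactly $w(e)$. Second, the reason unbounded elevators contribute no extra factor is not that ``the marking accounts for them'' but simply that for $\Delta_{c,m,d}$ all vertical ends have weight~$1$, hence $[1]_y=1$. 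With these clarifications your outline matches the published proof.
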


\subsection{Caporaso-Harris type recursion}
In \cite{BG} also a Caporaso-Harris type recursion is proven for the refined Severi degrees of $\P^2$, $\P(1,1,m)$ and $\Sigma_m$, thus showing that they coincide with the refined Severi degrees as
defined in \cite{GS}. This recursion can be easily programmed in Maple, and has been extensively used in the course of this paper to find conjectural generating functions for the refined Severi degrees.
In this section let $S$ be $\P^2$, $\P(1,1,m)$ and $\Sigma_m$. 
We first recall the notations.

By a {\it sequence} we mean a collection $\alpha=(\alpha_1,\alpha_2,\ldots)$ of nonnegative integers, almost all of which are zero. For two sequences
$\alpha$, $\beta$ we define 
$|\alpha|=\sum_i\alpha_i$, $I\alpha=\sum_i i\alpha_i$, 
$\alpha+\beta=(\alpha_1+\beta_1,\alpha_2+\beta_2,\ldots)$, and $\binom{\alpha}{\beta}=\prod_i \binom{\alpha_i}{\beta_i}$. We write $\alpha\le\beta$ to mean $\alpha_i\le \beta_i$ for all $i$.
We write $e_k$ for the sequence whose $k$-th element is $1$ and all other ones  $0$.
We usually omit trailing zeros. 
For sequences $\alpha$, $\beta$, and $\delta\ge 0$, let $\gamma(L, \beta,\delta)=\dim|L|-HL+|\beta|-\delta$.

The relative refined Severi degrees $N^{(S,L),\delta}(\alpha,\beta)(y)$ is defined in \cite[Def.~7.2]{BG}. Here $N^{(S,L),\delta}(\alpha,\beta)(1)$ is the  relative Severi degree, i.e. the number of $\delta$-nodal curves in $|L|$ not containing $H$, through $\gamma(L, \beta,\delta)$ general points,
and with $\alpha_k$ given points of contact of order $k$ with $H$, and $\beta_k$ arbitrary points of contact of order $k$ with $H$.
By definition the relative refined Severi degrees contain the refined Severi degrees as special case:
$N^{(S,L),\delta}(0,(LH))(y)=N^{(S,L),\delta}(y)$.

\begin{thm} (\cite[Thm.~7.5]{BG}) \label{Caporaso}
Let  $L$ be a line bundle on $S$ and let $\alpha$, $\beta$ be sequences with $I\alpha+I\beta=HL$, and 
let $\delta\ge 0$ be an integer. 
If $\gamma(L, \beta,\delta)>0$, then 
\begin{equation}\label{refrec}
\begin{split}
N^{(S,L),\delta}(\alpha,\beta)(y)&=\sum_{k:\beta_k>0} [k]_y \cdot  N^{(S,L),\delta}(\alpha+e_k,\beta-e_k)(y)\\
&+\sum_{\alpha',\beta',\delta'}
\left(\prod_i [i]_y^{\beta_i'-\beta_i}\right)
\binom{\alpha}{\alpha'}\binom{\beta'}{\beta}  N^{(S,L-H),\delta'}(\alpha',\beta')(y).
\end{split}
\end{equation}
Here the second sum runs through all  $\alpha',\beta',\delta'$ satisfying the condition
\begin{equation}\label{relcong}
\begin{split}
\alpha'&\le \alpha, \ \beta'\ge \beta,\ I\alpha'+I\beta'=H(L-H),\\ \delta'&=\delta+g(L-H)-g(L)+|\beta'-\beta|-1=\delta-H(L-H)+|\beta'-\beta|.
\end{split}
\end{equation}
{\bf Initial conditions:} if $\gamma(L,\beta,\delta)=0$ we have  $N^{(S,L),\delta}(\alpha,\beta)(y)=0$, except for $N^{(\P^2,H),0}((1),(0))(y)=1$, $N^{(\P(1,1,m),H),0}((1),(0))(y)=1$ and  
$N^{(\Sigma_m, kF),0}((k),(0))(y)=1$, for all $k\ge 0$.
\end{thm}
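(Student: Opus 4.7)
The plan is to carry out the classical Caporaso-Harris degeneration argument entirely at the combinatorial level, on the model of $\Delta$-floor diagrams provided by \thmref{Refinedfloor}. First I would extend that theorem to the relative refined Severi degrees: $N^{(S,L),\delta}(\alpha,\beta)(y)$ is the same weighted count $\sum \mult(\D,y)\,\nu(\D)$, but now over floor diagrams in which the left-unbounded ends carry contact data, with exactly $\alpha_k$ ``fixed'' and $\beta_k$ ``free'' ends of weight $k$ for each $k$. The equivalence relation on markings is modified to preserve the fixed-versus-free distinction on the left ends.

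The recursion is then obtained by looking at the leftmost floor of the diagram and distinguishing two exclusive cases, matching the two sums in \eqref{refrec}. In the first case the leftmost floor carries no interior point condition, so the specialization forces a general point onto a free end of weight $k$; promoting this end from free to fixed implements the map $(\alpha,\beta)\mapsto(\alpha+e_k,\beta-e_k)$ and extracts one factor $[k]_y$ from the contribution $[k]_y^2$ of that edge to $\mult(\D,y)$, producing the first sum. In the second case the leftmost floor does carry a point condition, and one peels it off: the truncated diagram is a floor diagram for the pair $(S,L-H)$, the edges that previously entered the removed floor become new free left-ends (yielding a sequence $\beta'\ge\beta$ with joint weight contribution $\prod_i [i]_y^{\beta_i'-\beta_i}$), and $\binom{\alpha}{\alpha'}\binom{\beta'}{\beta}$ counts the choices of which original fixed ends are retained and which of the new ends become ``fixed'' after peeling. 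The arithmetic conditions relating $\alpha',\beta',\delta'$ to $\alpha,\beta,\delta$ follow from the genus-degree relation applied to $L$ versus $L-H$ together with a direct count of the new ends. The initial cases at $\gamma(L,\beta,\delta)=0$ are checked by inspection: only the three listed single-floor configurations carry no free point conditions.

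The main obstacle is the bookkeeping inside the floor-peeling step: one must show that the induced map from markings of the original diagram to markings of the peeled one is surjective with fibre cardinality exactly $\binom{\alpha}{\alpha'}\binom{\beta'}{\beta}$, while checking that $\mult(\D,y)$ factors as $\prod_i [i]_y^{\beta_i'-\beta_i}\cdot \mult(\D',y)$ up to edge contributions already absorbed by $\mult(\D',y)$. This requires a careful analysis of how the marking equivalence relation (which quotients by weighted-graph automorphisms fixing the vertices of $\D$) interacts with the decomposition and of how the refined edge weights distribute across the peeling operation.
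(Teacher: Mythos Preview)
This theorem is not proved in the present paper; it is quoted verbatim from \cite[Thm.~7.5]{BG} and used as input for the computations in later sections. There is therefore no ``paper's own proof'' to compare your proposal against.

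That said, your sketch is broadly along the lines of the argument in \cite{BG}: the relative refined Severi degrees are defined there via marked floor diagrams with tangency data, and the recursion is established by the standard Caporaso--Harris move of specialising the last point condition (either onto $H$, producing the first sum, or onto a new floor, producing the second). Your identification of the two cases and of the combinatorial factors $[k]_y$, $\prod_i[i]_y^{\beta_i'-\beta_i}$, $\binom{\alpha}{\alpha'}\binom{\beta'}{\beta}$ is correct in spirit. One caution: in the floor-diagram model used here the recursion is more naturally read from the \emph{rightmost} floor (the one adjacent to the divisor $H$ corresponding to the top edge of the polygon $\Delta_{c,m,d}$), not the leftmost; your description of ``left-unbounded ends'' should be checked against the conventions of \defref{def:marking}. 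The bookkeeping you flag as the main obstacle is indeed the heart of the argument, and \cite{BG} handles it by tracking markings directly rather than by an explicit fibre-cardinality computation.
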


\subsection{Long edge graphs}

 We review long edge graphs from \cite{BCK}, \cite{L}, \cite{LO}, working in the context of refined invariants.
 They are very close related to Floor diagrams.
 We follow   the presentation in \cite{L}, \cite{LO}. The arguments used are similar to those of \cite{L}, \cite{LO}.
 \begin{defn}
 A {\em long edge graph} $G$ is a graph $(V,E)$ with a weight function $w:E\to \Z_{>0}$ satisfying the following.
 \begin{enumerate}
 \item
 The vertex set is $V=\Z_{\ge 0}$, the edge set $E$ is finite.
 \item $G$ can have multiple edges, but no loops.
 \item $G$ has no short edges, i.e. no edges connecting $i$ and $i+1$ of weight $1$.
 \end{enumerate}
 \end{defn}
 An edge connecting $i$ and $j$ with $i<j$ will be denoted $(i\to j)$ (note that there can be more than one such edge).
 The  {\em length} of an edge $e=(i\to j)$ is $\ell(e):=j-i$.
 
 \begin{defn}\label{MG}
 Given a long edge graph $G=(V,E,w)$, the {\em refined multiplicity} of $G$ is 
 $$M(G)(y):=\prod_{e\in E} ([w(e)]_y)^2.$$ The {\em Severi multiplicity} $m(G)$ and the {\em Welschinger multiplicity} of $G$ are
  $$m(G):=M(G)(1)=\prod_{e\in E} w(e)^2,\qquad r(G):=M(G)(-1)=\begin{cases} 1& \hbox{all $w(e)$ are odd},\\
  0&\hbox{otherwise}.\end{cases}$$
  The {\em cogenus} of $G$ is 
  $\delta(G):=\sum_{e\in E} (\ell(e)w(e)-1).$
 
 We denote $\minv(G)$ (resp.~ $\maxv(G)$)  the smallest (resp.~ largest) vertex $i$ of $G$ adjacent to an edge.
 The {\em length} of $G$ is $l(G):=\maxv(G)-\minv(G)$. 
 
 We denote $G_{(k)}$ the graph obtained by shifting all edges of $G$ to the right by $k$. 
 \end{defn}
 
 \begin{defn}
 Let $G$ be a long edge graph.
 For any $j\in \Z_{\ge 0}$ let $\lambda_j(G):=\sum_e w(e)$, for $e$ running through the edges $(i\to k)$ with $i<j\le k$.
 
 For $\beta=(\beta_0,\ldots,\beta_M)$ a sequence of nonnegative integers, $G$ is called {\em $\beta$-allowable} if $\maxv(G)\le M+1$ and $\beta_{j-1}\ge\lambda_j(G)$ for all $j=1,\ldots,M+1$. 
 $G$ is called {\em strictly $\beta$-allowable} if it is $\beta$-allowable and furthermore
 all edges incident to $0$ or $M+1$ have weight $1$. Also write $\overline\lambda_{j}(G):=\lambda_j(G)-\#\{\hbox{edges } (j-1\to j)\}$.
 $G$ is called {\em $\beta$-semiallowable} if $\maxv(G)\le M+1$ and $\beta_{j-1}\ge \overline \lambda_j(G)$ for all $j$.
 \end{defn}
 
  In this paper we will mostly consider the following sequences.
 \begin{nota}
 Let $c,d,m\in \Z_{\ge 0}$. We put 
 $s(c,m,d):=(e_0,\ldots,e_d)$ with $e_i=c+mi$.
 \end{nota}
 
\begin{defn}
A long edge graph $\Gamma$ is a {\em template} if for any vertex $1\le i\le \ell(\Gamma)-1$ there exists at least one edge
$(j\to k)$ with $j<i<k$. A long edge graph $G$ is called a {\em shifted template} if $G=\Gamma_{(k)}$ for some template $k\in \Z_{\ge 0}$.
\end{defn}

\begin{defn} Let $G$ be $\beta$-allowable for $\beta=(\beta_0,\ldots,\beta_M)$.
Define a new graph $\ext_\beta(G)$ by adding $\beta_{j-1}-\lambda_j(G)$ edges of weight $1$ connecting $j-1$ and $j$ for all $j=1,\ldots,M+1$.

A {\em $\beta$-extended ordering} of $G$ is a total ordering of the vertices and edges of $\ext_\beta(G)$, such that 
\begin{enumerate}
\item  it extends the natural ordering of the vertices $0,1,2,\ldots$, 
\item if an edge $e$ connects vertices $i$ and $j$, then $e$ is between $i$ and $j$.
\end{enumerate}
Two extended orderings $o,o'$ of $G$ are considered equivalent if there is an automorphism of the edges, permuting only edges connecting the same vertices and of the same weight which sends $o$ to $o'$.
\end{defn}
 
 \begin{defn}
 For a long edge graph let $P_\beta(G)$ be the number of $\beta$-extended orderings of $G$ up to equivalence. 
 Here $P_\beta(G)$ is defined to be $0$, if $G$ is not $\beta$-allowable. 
 Furthermore let $P^s_\beta(G):=\begin{cases} P_\beta(G)& G\hbox{ strictly $\beta$-allowable},\\
 0& \hbox{otherwise.}\end{cases}$
 \end{defn}
 
 \begin{defn} \label{Ndelbeta} Given $\beta\in \Z^{M+1}_{\ge 0}$, define 
 $$N^\delta_\beta(y):=\sum_{G}M(G)P^s_\beta(G), \quad n^\delta_\beta:=\sum_{G}m(G)P^s_\beta(G),\quad W^\delta_\beta:=\sum_{G}r(G)P^s_\beta(G),$$ 
 where the summation is over all long edge graphs $G$ of cogenus $\delta$. 
 \end{defn}
 
\begin{nota}
 We denote $\Sigma_m:=\P(\oo\oplus\oo(m))$ the $m$-th rational ruled surface. Let $F$ be the class of the fibre of the ruling and let $E$ be the class of a section with 
 $E^2=-m$. We denote $H:=E+mF$.
 \end{nota}
 
 The connection to the refined Severi and tropical Welschinger numbers is given by 
 \begin{thm}\label{severisequence}
  \begin{enumerate}
 \item For the refined Severi degrees of $\P^2$, $\P(1,1,m)$ and $\Sigma_m$ we have
$
 N^{d,\delta}(y)=N^\delta_{s(0,d,1)}(y)$, 
 $N^{(\P(1,1,m),dH),\delta}(y)=N^\delta_{s(0,m,d)}(y),$
 $N^{(\Sigma_m,cF+dH),\delta}(y)=N^\delta_{s(c,m,d)}(y).
$ 
 \item For the Severi degrees we have $
 n^{d,\delta}=n^\delta_{s(0,d,1)}$, 
 $n^{(\P(1,1,m),dH),\delta}=n^\delta_{s(0,m,d)},$
 $n^{(\Sigma_m,cF+dH),\delta}=n^\delta_{s(c,m,d)}.$
 \item For the  Welschinger numbers  we have 
 $W^{d,\delta}=W^\delta_{s(0,d,1)}$, 
 $W^{(\P(1,1,m),dH),\delta}=W^\delta_{s(0,m,d)},$
 $W^{(\Sigma_m,cF+dH),\delta}=W^\delta_{s(c,m,d)}.$
 \end{enumerate}
\end{thm}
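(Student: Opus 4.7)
The plan is to derive parts (2) and (3) from (1) by specialization, and to prove (1) by combining the Floor-diagram formula of \thmref{Refinedfloor} with a bijection between marked floor diagrams and extended-ordered long edge graphs, adapted from \cite{BCK}, \cite{L}, \cite{LO}.

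The specializations are immediate. By \defref{MG} the multiplicity $M(G)(y)$ specializes to $m(G)$ at $y=1$ and to $r(G)$ at $y=-1$; hence $N^\delta_\beta(y)$ specializes to $n^\delta_\beta$ and $W^\delta_\beta$ at $y = \pm 1$. The refined Severi degree $N^{(S,L),\delta}(y)$ in turn specializes to the Severi degree at $y=1$ and to the tropical Welschinger number at $y=-1$, the latter equal to the Welschinger number $W^{(S,L),\delta}$ under the subtropical convention fixed in the introduction. So once part (1) is proved, parts (2) and (3) follow by setting $y = \pm 1$ in (1).

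For (1), I would apply \thmref{Refinedfloor} to write
\[
N^{\Delta_{c,m,d},\delta}(y) \;=\; \sum_{\D \in \FD(\Delta_{c,m,d},\delta)} \mult(\D;y)\,\nu(\D),
\]
and then identify this with $\sum_G M(G)(y)\,P^s_{s(c,m,d)}(G)$. The mechanism is a bijection $\Phi$ between equivalence classes of marked $\Delta_{c,m,d}$-floor diagrams of cogenus $\delta$ and pairs $(G,[o])$ consisting of a strictly $s(c,m,d)$-allowable long edge graph $G$ of cogenus $\delta$ together with an equivalence class of $s(c,m,d)$-extended orderings. Under $\Phi$, the long edges of $G$ are assembled from the non-short edges of $\D$ (carrying over their weights), the short weight-$1$ edges of $\ext_{s(c,m,d)}(G)$ between consecutive integers encode the Step 1 and Step 2 adjacent-floor edges, and the linear order produced in Step 4 of the marking corresponds to the extended ordering. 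The flow identity relating $\lambda_j(G)$ to cumulative divergences, together with $\sum_j s_j = c$ and $\sum_j \text{div}(j) = 0$, turns the Divergence Condition $\text{div}(j) \le m + s_j$ into the allowability inequality $\lambda_j(G) \le c + m(j-1) = \beta_{j-1}$, and the cogenus identity $\#(\Delta\cap\Z^2)-1-k_{\widetilde{\D}} = \sum_{e\in E(G)}(\ell(e)w(e)-1)$ follows from bookkeeping the four steps of the marking procedure.

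At the unrefined level $y = 1$, a bijection of this form is the combinatorial core of the arguments of \cite{BCK}, \cite{L}, \cite{LO} covering $\P^2$, $\P(1,1,m)$ and the Hirzebruch surfaces $\Sigma_m$. Since the Floor-diagram refined multiplicity (\defref{def:refined_multiplicity_FD}) and the long-edge-graph refined multiplicity (\defref{MG}) are both $\prod_e [w(e)]_y^2$, and $\Phi$ preserves edge weights, the refined identity follows from the Severi identity by replacing $w(e)$ with $[w(e)]_y$ on each edge. The main obstacle I expect is the careful verification of the dictionary in the $\Sigma_m$ case with $c > 0$ — in particular matching the equivalence relations (automorphisms of markings fixing the floor-diagram vertices on one side, permutations of parallel edges of equal weight and of indistinguishable Step 1/Step 2 markers on the other), and handling the boundary indices $j = 1$ and $j = d+1$ where the Step 1 and Step 2 contributions degenerate, forcing the strict-allowability condition on edges incident to $0$ and $d+1$. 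Once this bookkeeping is carried out in the Severi case, the refined statement of \thmref{severisequence} is a formal upgrade.
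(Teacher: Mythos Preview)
Your proposal is correct and follows essentially the same route as the paper: reduce (2) and (3) to (1) by specialising $y=\pm 1$, then prove (1) by combining \thmref{Refinedfloor} with a cogenus- and multiplicity-preserving bijection between (marked) $\Delta_{c,m,d}$-floor diagrams and strictly $s(c,m,d)$-allowable long edge graphs equipped with extended orderings, exactly as in \cite{BCK}, \cite{L}, \cite{LO}. The paper organises the bijection slightly differently---it passes through an intermediate notion of ``$\beta$-graphs'' (graphs with short edges allowed and $\lambda_j=\beta_{j-1}$), identifying the Step~1 vertices to $0$ and the Step~2 vertices to $d+1$---but this is only a repackaging of the same dictionary you describe, and the refined upgrade is, as you say, purely formal since both multiplicities are $\prod_e [w(e)]_y^2$.
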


\begin{pf} The proof is similar to that of \cite[Thm.~2.7]{BCK}, we include it for completeness.
It is enough to prove (1), because by \defref{Ndelbeta} and \defref{MG} we have $n^\delta_\beta=N^\delta_\beta(1)$ and $W^\delta_\beta=N^\delta_\beta(-1)$, and
we know $N^{(S,L),\delta}(1)=n^{(S,L),\delta}$, $N^{(S,L),\delta}(-1)=W^{(S,L),\delta}$ for any pair $(S,L)$ of toric surface and toric line bundle.
Furthermore it is enough to prove (1) in case $S=\Sigma_m$, because by \thmref{Refinedfloor} we have
$N^{(\P(1,1,m),dH),\delta}(y)=N^{(\Sigma(1,1,m),dH),\delta}(y)$.

Let $\Delta=\Delta_{c,m,d}$ for $c,m,d\in \Z_{\ge 0}$.  Let $\beta:=s(c,m,d)$. We will show that $N^\delta_\beta$ is equal to the right hand side of \thmref{Refinedfloor}, thus finishing the proof. 
First we show that there is a bijection between $\Delta$-floor diagrams and strictly $\beta$-allowable long-edge graphs which respects the cogenus, by showing that both are in bijection to another set of graphs, which for the moment we will call $\beta$-graphs. A $\beta$-graph is defined precisely like a long edge graph, except that we also allow for short edges 
$(i\to i+1)$ of weight $1$, and we require $\beta_{j-1}=\lambda_j(G)$ for $j=1,\ldots,d+1$, where as before  $\lambda_j(G)=\sum_{e}w(e)$, with $e$ running through the edges $(i\to k)$ with $i<j\le k$. 
By definition it is clear that the map $G\mapsto \ext_{\beta}(G))$ defines a bijection from the strictly $\beta$-allowable long-edge graphs to the  $\beta$-graphs, and 
the inverse is given by removing all short edges $(i\to i+1)$ of weight $1$ from a $\beta$-graph. 
We define the cogenus of a $\beta$-graph by
$\delta(G)=\sum_e (l(e)w(e)-1)$, with $e$ running over all edges of $G$. It is obvious that $\delta(G)=\delta(\ext_{\beta}(G))$.

If $\D$ is a $\Delta$-floor diagram, we first perform steps (1) and (2) in \defref{def:marking}. Then we identify all vertices we have created in step (1) to a vertex $0$, and we identify all vertices we have created in step (2) to a vertex $d+1$, in addition we add vertices $\Z_{\ge d+2}$ to the graph obtained this way. It is easy to see that in this way we get a $\beta$-graph $G(\D)$. Clearly the map $\D\mapsto G(\D)$ is injective, as all the steps are injective, and by definition is is also clear that it is surjective.
If $\widetilde \D$ is a marking of $\D$, then we see that the total number of vertices of $\widetilde \D$ is equal to $d+\#E$ where $E$ is the set of edges of $G(\D)$.
Defining $M(F):=\prod_e  [w(e)]_y^2$ with $e$ running through the edges of the $\beta$-graph $F$,  Definitions \ref{MG} and \ref{def:refined_multiplicity_FD}
imply $\mult(\D)=M(G(\D))$ for a floor diagram $\D$ and $M(G)=M(\ext_\beta(G))$ for a long edge graph $G$. 
From the  definitions we  also see that 
$$\delta(G(\D))=\sum_{e\in E} w(e)l(e)-\#E=\sum_{i=1}^d \lambda_i(G(\D))-\#E=\#(\Delta\cap \Z^2)-d-\#E-1=\delta(\widetilde \D).$$ 
Note that  the markings of the $\Delta$-floor diagram $\D$ are in bijection with  the number of diagrams obtained by putting one vertex on every 
edge of $G(\D)$ and ordering all the vertices of the new diagram, preserving the order of the vertices of $G(\D)$, and such that the vertex introduced on an edge $(i\to j)$ lies between $i$ and $j$. But this number clearly is the same as the number of linear orders on the union of the vertices and edges of $G(\D)$, again preserving the order of the vertices and and such that the edge
 $(i\to j)$ lies between $i$ and $j$. By definition this is just the number of $\beta$-extended orderings of the long edge graph corresponding to $\D$.
\end{pf}

\begin{rem}
More generally the methods of \cite{BG} will show (using also the notations from \cite{LO} ) the following refined version of \cite[Thm.~2.12]{LO} 
(see \cite[Rem.~5.8]{BG}).

\begin{enumerate}
\item
For any $\delta\ge 0$, any $h$-transversal lattice polygon 
%(with all sides of length at least $\delta$-1 \begin{LG} ???? why should there be an assumption about the length of the sides this seems nonsense, it should be only needed for the polynomiality, check this!\, now I understand. \end{LG}
the refined Severi degree  is 
$$
N^{\Delta,\delta}(y)=\sum_{(\mathbf l,\mathbf r)}N^{\delta-\delta(\mathbf l,\mathbf r)}_{\beta(d^t,\mathbf r-\mathbf l)}(y).
$$
Here the summation is over all reorderings $\mathbf l$ and $\mathbf r$ of the multisets of left and right directions of $\Delta$, 
satisfying $\delta(\mathbf l,\mathbf r)\le \delta$, $\beta(d^t,\mathbf r-\mathbf l)\in \Z^{M+1}_{\ge 0}$.
\item With the same index of summation we have 
$$
n^{\Delta,\delta}=\sum_{(\mathbf l,\mathbf r)}n^{\delta-\delta(\mathbf l,\mathbf r)}_{\beta(d^t,\mathbf r-\mathbf l)},
\quad
W^{\Delta,\delta}=\sum_{(\mathbf l,\mathbf r)}W^{\delta-\delta(\mathbf l,\mathbf r)}_{\beta(d^t,\mathbf r-\mathbf l)},
$$
\end{enumerate}
\end{rem}

Following \cite{L},\cite{LO}, we consider logarithmic versions of $P_\beta(G)$ and $P^s_\beta(G)$, 
\begin{defn}
A {\em partition} of a long edge graph $G=(V,E,w)$ is a tuple $(G_1,\ldots,G_n)$ of nonempty long edge graphs such that the disjoint union of the (weighted) edge sets of 
$G_1, \ldots,G_n$ is the (weighted) edge set of $G$.

For any long edge graph define
\begin{align*}
\Phi_\beta(G)&:=\sum_{n\ge 1} \frac{(-1)^{n+1}}{n}\sum_{G_1,\ldots,G_n}\prod_{j=1}^nP_\beta(G_j),\\
\Phi^s_\beta(G)&:=\sum_{n\ge 1} \frac{(-1)^{n+1}}{n}\sum_{G_1,\ldots,G_n}\prod_{j=1}^nP^s_\beta(G_j),
\end{align*}
where both summations are over the partitions of $G$.
\end{defn}

Let $${\mathcal N}(\beta,y,t):=1+\sum_{\delta>0} N_\beta^\delta (y)t^\delta,\quad {\mathcal Q}(\beta,y,t):=\log({\mathcal N}(\beta,y,t))=\sum_{\delta>0} Q^\delta_\beta(y) t^\delta.$$
Then the same arguments as in \cite{LO} show that 
\begin{equation}\label{QQdel}
Q^\delta_\beta(y)=\sum_{G} M(G)\Phi^s_\beta(G),
\end{equation}
where the summation is again over all long-edge graphs of cogenus $\delta$.

\begin{defn}
Let $G$ be a long edge graph. 
Let  $\epsilon_0(G):=1$,  if all edges adjacent to $\minv(G)$ have weight $1$, and  $\epsilon_0(G):=0$ otherwise. Similarly let
$\epsilon_1(G):=1$,  if all edges adjacent to $\maxv(G)$ have weight $1$, and  $\epsilon_1(G):=0$ otherwise.
\end{defn}

By \cite[Lem.~2.15]{L} we have $\Phi^s_\beta(G)=0$, if $G$ is not a shifted template. On the other hand   \cite[Cor.~3.5]{L} says that for a template $\Gamma$ we have
$$
\Phi^s_\beta(\Gamma_{(k)})=
  \begin{cases}
    \Phi_\beta(\Gamma_{(k)})& 1-\epsilon_0(\Gamma)\le k\le M+\epsilon_1(\Gamma)-\ell(\Gamma)\\ 
    0 &\hbox{otherwise.}
  \end{cases}
$$
Together with \eqref{QQdel}, this gives the following refined version of \cite[Cor.~3.6]{LO}.

\begin{cor}\label{templatesum} Let $\beta=(\beta_0,\ldots,\beta_{M})\in \Z_{\ge 0}^{M+1}$. Then 
$$Q^\delta_\beta(y)=\sum_{\Gamma} M(\Gamma)\sum_{k=1-\epsilon_0(\Gamma)}^{M-\ell(\Gamma)+\epsilon_1(\Gamma)}\Phi_\beta(\Gamma_{(k)}),$$
where the first sum runs over all templates $\Gamma$ of cogenus $\delta$.
\end{cor}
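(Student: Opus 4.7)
The plan is to assemble the statement from three ingredients that have been isolated just above. First, formula \eqref{QQdel} expresses
$$Q^\delta_\beta(y) = \sum_G M(G)\,\Phi^s_\beta(G),$$
with $G$ ranging over all long edge graphs of cogenus $\delta$. Second, \cite[Lem.~2.15]{L} forces $\Phi^s_\beta(G)=0$ unless $G$ is a shifted template, so the sum collapses to one over shifted templates $G=\Gamma_{(k)}$. Third, \cite[Cor.~3.5]{L} identifies $\Phi^s_\beta(\Gamma_{(k)})$ with $\Phi_\beta(\Gamma_{(k)})$ precisely when $1-\epsilon_0(\Gamma)\le k \le M+\epsilon_1(\Gamma)-\ell(\Gamma)$, and gives $0$ outside this range.

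To convert this into the corollary's indexing, I would first observe two easy invariances under shifting. By \defref{MG}, $M(G)=\prod_{e\in E}([w(e)]_y)^2$ depends only on the weighted edge multiset, and $\delta(G)=\sum_e(\ell(e)w(e)-1)$ depends only on edge lengths and weights; both are preserved by translating all edges rightwards, so $M(\Gamma_{(k)})=M(\Gamma)$ and $\delta(\Gamma_{(k)})=\delta(\Gamma)$. Every shifted template of cogenus $\delta$ is therefore uniquely of the form $\Gamma_{(k)}$ for a template $\Gamma$ of cogenus $\delta$ (namely the one with $\minv(\Gamma)=0$) and a unique shift $k\in\Z_{\ge 0}$, so the reindexing $G\leftrightarrow(\Gamma,k)$ is a bijection on the set of graphs that contribute.

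Substituting the three ingredients and pulling the factor $M(\Gamma)$ out of the inner sum over $k$ then yields
$$Q^\delta_\beta(y) = \sum_{\Gamma}M(\Gamma)\sum_{k=1-\epsilon_0(\Gamma)}^{M-\ell(\Gamma)+\epsilon_1(\Gamma)}\Phi_\beta(\Gamma_{(k)}),$$
with the outer sum over templates of cogenus $\delta$, as required. The main (and essentially only) thing to check carefully is that the cutoff range from \cite[Cor.~3.5]{L} really matches the notation used here: the lower bound $k\ge 1-\epsilon_0(\Gamma)$ is the condition that $\minv(\Gamma_{(k)})\ge 0$ when $\Gamma$ has a weight-$1$ edge at its left end (so that the leftmost vertex may be identified with $0$), and the upper bound $k\le M-\ell(\Gamma)+\epsilon_1(\Gamma)$ is the symmetric condition at the right end ensuring strict $\beta$-allowability of $\Gamma_{(k)}$ for a sequence indexed by $0,1,\ldots,M$. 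Once this bookkeeping is verified, no further work is needed.
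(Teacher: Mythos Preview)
Your proposal is correct and follows the same route as the paper: the paper simply states ``Together with \eqref{QQdel}, this gives the following refined version of \cite[Cor.~3.6]{LO},'' i.e.\ it assembles exactly the three ingredients you list. Your explicit check that $M(\Gamma_{(k)})=M(\Gamma)$ and $\delta(\Gamma_{(k)})=\delta(\Gamma)$, and that the map $(\Gamma,k)\mapsto\Gamma_{(k)}$ is a bijection onto shifted templates, fills in details the paper leaves implicit. One minor quibble: your heuristic for the lower bound $k\ge 1-\epsilon_0(\Gamma)$ is not quite right (it is not about $\minv(\Gamma_{(k)})\ge 0$, which holds automatically, but about whether $\Gamma_{(k)}$ can be strictly $\beta$-allowable when a high-weight edge sits at vertex $0$); however this does not affect the proof, since the bounds are quoted directly from \cite[Cor.~3.5]{L} and need not be rederived.
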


\begin{thm}\cite[Thm.~3.8]{LO} \label{linear} Let $G$ be a long edge graph. 
There exists a linear multivariate function $\Phi(G,\beta)$ in $\beta$, such that for any $\beta$ such that $G$ is $\beta$-semiallowable, we have 
$\Phi_\beta(G)=\Phi(G,\beta)$. Furthermore writing $\beta=(\beta_0,\ldots,\beta_M)\in \Z_{\ge 0}^{M+1}$, the linear function
$\Phi(G,\beta)$ is a linear combination of the $\beta_{i}$ with $\minv(G)\le i\le \maxv(G)$.
\end{thm}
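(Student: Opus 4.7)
The plan is to adapt verbatim the proof of \cite[Thm.~3.8]{LO}, which rests on the combinatorial analyses in \cite{BCK} and \cite{L}. The refined setting requires no modification because the weights $w(e)$ enter only through $M(G)$, while $\Phi_\beta(G)$ is a purely combinatorial quantity independent of $y$.

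First I would write out $P_\beta(G)$ combinatorially. For $\beta$-allowable $G$, each $\beta$-extended ordering is determined by an ordering of the original edges of $G$ relative to the vertices $0<1<\cdots<M+1$, together with an insertion of the $\beta_{j-1}-\lambda_j(G)$ indistinguishable weight-$1$ short edges added to gap $j$. Careful bookkeeping with edge-equivalence yields
$$P_\beta(G)=\sum_{\sigma}\prod_{j=1}^{M+1}\binom{\beta_{j-1}-\bar\lambda_j(G)+r_j(\sigma)}{r_j(\sigma)},$$
where $\sigma$ ranges over orderings of the edges of $G$ with the vertices (modulo equivalence) and $r_j(\sigma)$ counts the edges placed in gap $j$. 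The essential point is that the $\beta$-dependence sits entirely in these binomial factors, one per gap, and the expression extends as a polynomial in $\beta$ throughout the $\beta$-semiallowable locus, giving the required interpretation of $\Phi_\beta$ there.

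Next, identify $\Phi_\beta(G)$ with the formal logarithm of $1+\sum_{H}P_\beta(H)x^{E(H)}$ in the algebra of weighted edge-multisets; this is immediate from the defining series $\Phi_\beta=\sum_{n\geq 1}\frac{(-1)^{n+1}}{n}(\cdots)$. Since $\binom{\beta-a+r}{r}$ is a polynomial in $\beta$ of degree $r$, the non-linear powers of $\beta_{j-1}$ appearing in $P_\beta(G)$ correspond to configurations in which several edges of $G$ are clustered in a single gap. The key combinatorial lemma, proved exactly as in \cite{LO}, asserts that upon taking the logarithm these cluster contributions cancel against partition contributions in which the cluster is split into single-edge blocks, so that what survives is first-order in each $\beta_{j-1}$. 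Organizing this as a telescoping identity for the finite difference $\Phi_{\beta+e_k}(G)-\Phi_\beta(G)$ via Pascal's rule on each binomial shows that this difference is a $\beta$-independent constant, and linearity of $\Phi_\beta(G)$ follows.

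The main obstacle will be this cancellation: one has to group partitions of $E(G)$ carefully, track the equivalence relation identifying indistinguishable edges through the logarithmic expansion, and verify that all higher-order $\beta$-monomials really do telescope. Finally, the localization of $\Phi(G,\beta)$ to those $\beta_i$ with $\minv(G)\leq i\leq\maxv(G)$ is automatic: incrementing $\beta_i$ outside this range only inserts short edges in gaps disjoint from the support of $G$, so the added edge forms its own connected component in every partition in which it appears and its contribution cancels in $\log(1+\cdots)$ by the standard disjoint-support vanishing.
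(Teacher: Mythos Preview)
The paper does not give its own proof of this statement; it is simply quoted as \cite[Thm.~3.8]{LO}. Your opening observation is exactly the point the paper relies on: $\Phi_\beta(G)$ is a purely combinatorial quantity that does not involve $y$ at all, so the result from \cite{LO} applies verbatim in the refined setting and no new argument is required.

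The remainder of your sketch is a plausible outline of the strategy in \cite{L,LO}, but a couple of details are off. Your localization argument at the end is not quite the mechanism: the short edges inserted when forming $\ext_\beta(G)$ are not edges of $G$ and hence never appear in partitions $(G_1,\ldots,G_n)$ of $G$, so there is no ``disjoint-support vanishing'' in $\log(1+\cdots)$ to invoke. The actual reason is more direct: if the gap between $i$ and $i+1$ lies entirely outside $[\minv(G),\maxv(G)]$ then no edge of $G$ crosses it, so increasing $\beta_i$ merely adds indistinguishable short edges with nothing to interleave against, and $P_\beta(G)$ (hence each $P_\beta(G_j)$, hence $\Phi_\beta(G)$) is literally unchanged. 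Since the paper only cites the result, these refinements are not needed here.
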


%\begin{nota}
%Given $\beta$ and a template  $\Gamma$, define 
%$$Q_\Gamma(\beta;y):=M(\Gamma)\sum_{k=1-\epsilon_0(\Gamma)}^{M-l(\Gamma)+\epsilon_1(\Gamma)}\Phi(\Gamma_{(k)},\beta).$$
%Then put 
%$Q_\delta(\beta;y)=\sum_{\Gamma} Q_\Gamma(\beta;y)$, with the summation over all templates of cogenus $\delta$.
%\end{nota}

\section{Multiplicativity theorems}
%\begin{LG} I am not sure I want to call this the main results. Maybe one could just call it Multiplicativity results, or I should think of something else.
%\end{LG}
In this section we will show that the generating functions for the refined Severi degrees on weighted projective spaces and rational ruled surfaces are multiplicative.

\begin{thm}\label{mainthm}
\begin{enumerate}
\item 
Let $c\ge \delta$ and $d\ge \delta$, then 
$Q^{(\Sigma_m,cF+dH),\delta}(y)$ is a $\Q[y^{\pm1}]$-linear combination of 
$1$, $c$, $d$, $cd$, $m$, $md$, $md^2$.
\item In particular if   $c\ge \delta$, $d\ge \delta$, then $Q^{(\P^1\times\P^1,cF+dH),\delta}(y)$ is a $\Q[y^{\pm1}]$-linear combination of 
$1$, $c+d$, $cd$.
\item 
Fix $m\ge 0$, $c\ge 0$. If $d\ge \delta$ then $Q^{(\Sigma_m, dH+cF),\delta}(y)$ is a polynomial of degree $2$ in $d$.
\item Fix $m\ge 0$. If  $d\ge \delta$, then 
$Q^{(\P(1,1,m), dH),\delta}(y)$ is a polynomial of degree $2$ in $d$. In particular for $d\ge \delta$, $Q^{d,\delta}(y)$ is a polynomial of degree $2$ in $d$.
\item 
If $d,m\ge \delta$, then $Q^{(\P(1,1,m),dH),\delta}(y)$ is a $\Q[y^{\pm1}]$-linear combination of $1$, $m$, $d$, $dm$, $d^2m$.
\end{enumerate}
\end{thm}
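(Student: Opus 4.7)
The plan is to assemble all five parts from two ingredients already in hand: Corollary~\ref{templatesum}, which writes $Q^\delta_\beta(y)$ as a sum over templates $\Gamma$ of cogenus $\delta$ and shifts $k$ of $M(\Gamma)\Phi_\beta(\Gamma_{(k)})$; and Theorem~\ref{linear}, which shows that once $\Gamma_{(k)}$ is $\beta$-semiallowable, $\Phi_\beta(\Gamma_{(k)})=\Phi(\Gamma_{(k)},\beta)$ is a linear combination of $\beta_k,\ldots,\beta_{k+\ell(\Gamma)}$ with coefficients $a_i(\Gamma)\in\Q$ depending only on the shape of the template. For $\beta=s(c,m,d)$ one has $\beta_i=c+mi$, so $\Phi(\Gamma_{(k)},\beta)=c\,A(\Gamma)+mk\,A(\Gamma)+m\,B(\Gamma)$ with $A(\Gamma):=\sum_i a_i(\Gamma)$ and $B(\Gamma):=\sum_i i\,a_i(\Gamma)$; summing $k$ across a range of length linear in $d$ then produces the polynomials in $c,d,m$ that the theorem predicts.

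For part (1) the first step is to verify that the hypothesis $c\ge\delta$ forces $\beta$-semiallowability for every shift in the relevant range $[1-\epsilon_0(\Gamma),\,d-\ell(\Gamma)+\epsilon_1(\Gamma)]$. This rests on the elementary bound $\overline{\lambda}_j(\Gamma)\le\delta$ for every template $\Gamma$ of cogenus $\delta$ and every $j$: a short $(j-1\to j)$ edge (which must have weight $\ge 2$) contributes $w(e)-1$ both to $\overline{\lambda}_j$ and to $\delta$, while a long crossing edge contributes $w(e)$ to $\overline{\lambda}_j$ and $\ell(e)w(e)-1\ge w(e)$ to $\delta$. Given this, $\beta_{j-1}\ge c\ge\delta\ge\overline{\lambda}_j(\Gamma_{(k)})$, so Theorem~\ref{linear} applies. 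Writing $N_\Gamma=d-\ell(\Gamma)+\epsilon_0+\epsilon_1$ and $S_\Gamma=\sum_{k=1-\epsilon_0}^{d-\ell+\epsilon_1}k$, summation yields $c\,A(\Gamma)N_\Gamma+m\,B(\Gamma)N_\Gamma+m\,A(\Gamma)S_\Gamma$. Since $N_\Gamma$ is linear and $S_\Gamma$ quadratic in $d$, multiplying by $M(\Gamma)\in\Q[y^{\pm1}]$ and summing over the finitely many templates with $\ell(\Gamma)\le\delta$ produces a $\Q[y^{\pm1}]$-linear combination of $\{c,cd,m,md,md^2\}\subset\{1,c,d,cd,m,md,md^2\}$.

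Part~(2) follows by specializing $m=0$ (which contracts the basis to $\{1,c,d,cd\}$) and using the $c\leftrightarrow d$ symmetry of $\P^1\times\P^1=\Sigma_0$ to restrict to the symmetric subspace $\{1,c+d,cd\}$. For part~(3) fix $c,m$ and split the $k$-sum into a ``good'' part (shifts where $\Gamma_{(k)}$ is $\beta$-semiallowable, handled exactly as in (1) and giving a polynomial of degree $\le 2$ in $d$) and a ``bad'' part (a finite initial segment whose contribution depends only on $\beta_0,\ldots,\beta_{k_\Gamma+\ell}$ for some $k_\Gamma$ bounded in terms of $c,m,\delta$, hence is constant in $d$); the total is a degree $\le 2$ polynomial in $d$. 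Part~(4) is then immediate from Theorem~\ref{Refinedfloor}, which identifies the refined Severi degrees of $\P(1,1,m)$ and $\Sigma_m$ for the polygon $\Delta_{0,m,d}$; specializing further to $m=1$ yields the $\P^2$ case.

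The main obstacle is part~(5), where the hypotheses $c=0$ and $d,m\ge\delta$ do not provide enough room for the shift $k=0$ (present only when $\epsilon_0(\Gamma)=1$) to be $\beta$-semiallowable, so Theorem~\ref{linear} does not apply there directly. The plan is to argue instead that this potentially bad contribution vanishes identically by a combinatorial argument on partitions: for any template $\Gamma$ with $\minv(\Gamma)=0$, every partition $(G_1,\ldots,G_n)$ of $\Gamma_{(0)}$ assigns at least one edge incident to vertex $0$ to some component $G_j$, so $\lambda_1(G_j)\ge 1>0=\beta_0$ fails the allowability bound and $P_\beta(G_j)=0$; hence every term in the definition of $\Phi_\beta(\Gamma_{(0)})$ vanishes, giving $\Phi_\beta(\Gamma_{(0)})=\Phi^s_\beta(\Gamma_{(0)})=0$. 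For all remaining shifts $k\ge 1$ the hypothesis $m\ge\delta$ supplies $\beta_k=km\ge m\ge\delta\ge\overline{\lambda}_j(\Gamma_{(k)})$, so the analysis of (1) applies with $c=0$ and produces only monomials in $\{m,md,md^2\}$, comfortably inside the claimed basis $\{1,m,d,dm,d^2m\}$.
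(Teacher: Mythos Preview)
Your proof is correct and follows essentially the same route as the paper: both reduce to Corollary~\ref{templatesum}, invoke Theorem~\ref{linear} once $\beta$-semiallowability is secured, and then sum the resulting linear expressions $\alpha+\beta(c+km)+\gamma m$ over the shift range. Your direct edge-by-edge verification of $\overline\lambda_j(\Gamma)\le\delta$ replaces the paper's citation of \cite[Lem.~4.2]{LO}, and your partition argument in part~(5) showing $\Phi_{s(0,m,d)}(\Gamma_{(0)})=0$ (because any partition must place an edge incident to~$0$ in some $G_j$, forcing $\lambda_1(G_j)\ge 1>0=\beta_0$ and hence $P_\beta(G_j)=0$) is in fact more explicit than the paper's one-line claim that $\Gamma$ is ``not $(0,m,d)$-semiallowable''.

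Two small cosmetic points. First, the linear function $\Phi(G,\beta)$ may carry a constant term (the paper writes it as $\alpha+\beta(c+km)+\gamma m$), so your summed expression should in principle include contributions in $\{1,d\}$; this is harmless since those monomials are already in the asserted span. Second, your aside that the sum over templates runs over ``templates with $\ell(\Gamma)\le\delta$'' is not quite accurate (e.g.\ the single edge $(0\to 2)$ of weight~$1$ has $\ell=2$, $\delta=1$); the correct bound is $\ell(\Gamma)-\epsilon_1(\Gamma)\le\delta$, which is what is actually needed to ensure the shift range is nonempty when $d\ge\delta$.
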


% \begin{LG} I could add here an additional result
% if we want to use it to give a generalization to singular surfaces and their resolutions.
% 
% (6) Fix $c\ge 0$, $m\ge 0$. For  $d\ge \delta$, 
% $Q^{(\Sigma_m,cF+dH),\delta}(y)$ is polynomial of degree $2$ in $d$.
% 
% (I have to see whether I can prove that (and even if it is true).)
% 
% The proof should be a modification of (3).
% \end{LG}

\begin{proof}
(1)
By \corref{templatesum} and \thmref{severisequence}, we have
\begin{equation} 
\label{qsum}Q^{(\Sigma_m,cF+dH),\delta}(y)=Q^{\delta}_{s(c,m,d)}(y)=\sum_{\Gamma} M(\Gamma)\sum_{k=1-\epsilon_0(\Gamma)}^{d-\ell(\Gamma)+\epsilon_1(\Gamma)}\Phi_{s(c,m,d)}(\Gamma_{(k)}),
\end{equation}
with $\Gamma$ running through all templates of cogenus $\delta$. 

Let $\Gamma$ now be a template of cogenus $\delta$, and let $k$ be an integer in $[1-\epsilon_0(\Gamma),d-\ell(\Gamma)+\epsilon_1(\Gamma)]$. Then by definition we get 
$\Phi_{s(c,m,d)}(\Gamma_{(k)})=\Phi_{s(c+km,m,\ell(\Gamma)-1)}(\Gamma).$
On the other hand by \cite[Lem.~4.2]{LO} we have $\overline \lambda_i(\Gamma)\le \delta$ for all $i$.
By our assumption we have $c\ge \delta\ge \overline\lambda_i(\Gamma)$, thus $\Gamma$ is $s(c+km,m,\ell(\Gamma)-1)$-semiallowable.
Therefore $\Phi_{s(c+km,m,\ell(\Gamma)-1)}(\Gamma)$ is a linear function in the $c+lm$, $k\le l\le k+\ell(\Gamma)-1$, thus it is linear function in $c$ and $km$ of the form $\alpha+\beta (c+ km) +\gamma m$, with $\alpha, \beta, \gamma\in \Q$.

Let $M_1:=d-\ell(\Gamma)+\epsilon_1(\Gamma)+\epsilon_0(\Gamma)$, $
M_2:=d-\ell(\Gamma)+\epsilon_1(\Gamma)-\epsilon_0(\Gamma)+1.$
It is easy to see (and was already used in \cite{L}) that for  a template $\Gamma$ of cogenus $\delta$ we have
$\ell(\Gamma)-\epsilon_1(\Gamma)\le\delta$, so, by our assumption $d\ge \delta$, we have $M_1\ge 0$.
Recall that for integers $b\ge a-1$ we have the trivial identity  
$$\sum_{k=a}^b k=\frac{(a+b)(b-a+1)}{2}.$$
Thus we get
\begin{align*}
\sum_{k=1-\epsilon_0(\Gamma)}^{d-\ell(\Gamma)+\epsilon_1(\Gamma)}\Phi_{s(c,m,d)}(\Gamma_{(k)})  & = \sum_{k=1-\epsilon_0(\Gamma)}^{d-\ell(\Gamma)+\epsilon_1(\Gamma)}\big(\alpha+\beta (c+ km) +\gamma m\big) \\
  & = M_1(\alpha+\beta c+\gamma m)+\frac{M_1M_2}{2} \beta m, 
\end{align*}
which is a  $\Q$-linear combination of $1,c,d,cd,m,md,md^2$. Thus the claim follows by \eqref{qsum}.

(2) By (1) $Q^{(\P^1\times\P^1,cF+dH),\delta}(y)$ is a linear combination of $1$, $c$, $d$, $cd$. It is clearly symmetric under exchange of $c$ and $d$, and thus a linear combination of $1$, $c+d$, $cd$.

(3) 
By \corref{templatesum} and \thmref{severisequence}, 
\begin{equation}\label{qsum1}
Q^{(\Sigma_m,cF+dH),\delta}(y)=Q^{\delta}_{s(c,m,d)}(y)=\sum_{\Gamma} M(\Gamma)\sum_{k=1-\epsilon_o(\Gamma)}^{d-\ell(\Gamma)+\epsilon_1(\Gamma)}\Phi_{s(c,m,d)}(\Gamma_{(k)}),
\end{equation}
with $\Gamma$ running through all templates of cogenus $\delta$. 
%According to \corref{templatesum}, the inner sum starts at $k=1-\epsilon_0(\Gamma)$. 
%But $\Gamma$ is  a template  and therefore not $(0,m,d)$-semiallowable. Thus (in case $\epsilon_0(\Gamma)=1$), the contribution for $k=0$  vanishes.

Let $\Gamma$ be a template of cogenus $\delta$, and let $k$ be an integer in $[1-\epsilon_0(\Gamma),d-\ell(\Gamma)+\epsilon_1(\Gamma)]$.
Then by definition we get 
$\Phi_{s(c,m,d)}(\Gamma_{(k)})=\Phi_{s(c+km,m,\ell(\Gamma)-1)}(\Gamma).$ 
For a rational number $a$ we denote by $\lceil a\rceil$ the smallest integer bigger or equal to $a$.
We put $$k_{min}:=\max\left.\left(1,\max\left(\left\lceil \frac{\overline \lambda_i(\Gamma)}{m}\right\rceil-i+1\right| i=1,\ldots, \ell(\Gamma)\right)\right).$$
For $k\ge k_{min}$ we have that $(k+i-1)m+c\ge \overline \lambda_i(\Gamma)$ for all $i$, thus $\Gamma$ is 
$s(c+km,m,\ell(\Gamma)-1)$-semiallowable.  Thus for $k\ge k_{min}$, we have that $\Phi_{s(c+km,m,\ell(\Gamma)-1)}(\Gamma)$ is a linear function in the 
$lm$, $k\le l\le k+\ell(\Gamma)-1$, thus it is a linear function $\alpha+\beta km +\gamma m$, with $\alpha, \beta,\gamma\in \Q$.

By \cite[Lem.~4.2]{LO}, we have $\overline\lambda_i(\Gamma)\le\delta-\ell(\Gamma)+i+\epsilon_1(\Gamma)$.
As $\overline\lambda_i(\Gamma)\ge 0$, this implies 
$$\left\lceil \frac{\overline \lambda_i(\Gamma)}{m}\right\rceil-i+1\le \delta+\epsilon_1(\Gamma)-\ell(\Gamma)+1$$ for all $i$.
By the inequality $\ell(\Gamma)-\epsilon_1(\Gamma)\le\delta$, already used in part (1), this implies
 $k_{min}\le \delta+\epsilon_1(\Gamma)-\ell(\Gamma)+1$. By our assumption $d\ge \delta$, we have $d-\ell(\Gamma)+\epsilon_1(\Gamma) -k_{min}+1\ge 0$.
Therefore the same argument as in (1) shows that the sum
$$\sigma(\Gamma,k_{min}):=\sum_{k=k_{min}}^{d-\ell(\Gamma)+\epsilon_1(\Gamma)}\Phi_{s(c,m,d)}(\Gamma_{(k)})$$ is a $\Q$-linear combination of 
$1$, $d$, $m$, $md$, $md^2$.
If we fix $m$, it is a linear combination of $1$, $d$, $d^2$.
But
$$\sum_{k=1-\epsilon_0(\Gamma)}^{d-\ell(\Gamma)+\epsilon_1(\Gamma)}\Phi_{s(c+km,m,l(\Gamma)-1)}(\Gamma)=
\sigma(\Gamma,k_{min})+\sum_{k=1-\epsilon_0(\Gamma)}^{k_{min}-1}\Phi_{s(c+km,m,l(\Gamma)-1)}(\Gamma).$$
The second sum is for fixed $m$ just a finite number, thus the claim follows.

(4)  As $Q^{(\P(1,1,m),dH),\delta}(y)=Q^{(\Sigma_m,dH),\delta}(y)$, (4) is a special case of (3).

(5)
By \corref{templatesum} and \thmref{severisequence}, 
\begin{equation} \label{qsum2}
Q^{(\P(1,1,m),dH),\delta}(y)=Q^{\delta}_{s(0,m,d)}(y)=\sum_{\Gamma} M(\Gamma)\sum_{k=1}^{d-\ell(\Gamma)+\epsilon_1(\Gamma)}\Phi_{s(0,m,d)}(\Gamma_{(k)}),
\end{equation}
with $\Gamma$ running through all templates of cogenus $\delta$. 
According to \corref{templatesum}, the inner sum starts at $k=1-\epsilon_0(\Gamma)$. But $\Gamma$ is  a template  and therefore not $(0,m,d)$-semiallowable. Thus (in case $\epsilon_0(\Gamma)=1$), the contribution for $k=0$  vanishes.

We have $Q^{(\P(1,1,m),dH),\delta}(y)=Q^{\delta}_{s(0,m,d)}(y)$, which is computed by the case $c=0$ of \eqref{qsum2}. %In case $c=0$ the inner sum on the right hand side of \label{qsum2} starts with $k=1$, because a template $\Gamma$ can never be $(0,m,d)$-semiallowable and thus (in case $\epsilon_0(\Gamma)=1$), the contribution for $k=0$  vanishes.
If $m\ge \delta$, then $k_{min}=1$ for all templates $\Gamma$ of cogenus $\delta$, thus 
$$Q^{(\P(1,1,m),dH),\delta}(y)=Q^{\delta}_{s(0,m,d)}(y)=\sum_{\Gamma} M(\Gamma)\sigma(\Gamma,1),$$
with $\Gamma$ again running through the templates of cogenus $\delta$. By (3) this is a $Q[y^{\pm 1}]$-linear combination of $1$, $d$, $m$, $md$, $md^2$.
\end{proof}

\section{Relation to the conjectural generating functions of the refined invariants}
In \cite{GS} refined invariants $\widetilde N^{(S,L),\delta}(y)$ of pairs $(S,L)$ of a smooth projective surface and a line bundle on $S$ were introduced. 
These are symmetric Laurent polynomials in a variable $y$, whose coefficients can be expressed universally (independent of $S$ and $L$) as polynomials in the
four intersection numbers $L^2$, $LK_S$ $K_S^2$ and $c_2(S)$ on the surface. 
For toric surfaces $S$ and sufficiently ample line bundles $L$  the  refined invariants $\widetilde N^{(S,L),\delta}(y)$ and  refined Severi degrees $N^{(S,L),\delta}(y)$ are conjectured to agree
 (\cite[Conj.~80]{GS}).

\begin{conj}\label{torconj}
Let $(S,L)$ be a pair of a smooth toric surface and a line bundle on $L$.
\begin{enumerate}
\item 
If $L$ is $\delta$-very ample on $S$, then $\widetilde N^{(S,L),\delta}(y)=N^{(S,L),\delta}(y)$.
\item 
$\widetilde N^{d,\delta}(y)=N^{d,\delta}(y)$ for $\delta\le 2d-2$.
\item $\widetilde N^{(\P^1\times\P^1,dH+cF),\delta}(y)=
N^{(\P^1\times\P^1,dH+cF),\delta}(y)$ for $\delta\le \min(2d,2c).$
\item $\widetilde N^{(\Sigma_m,dH+cF),\delta}(y)=
N^{(\Sigma_m,dH+cF),\delta}(y)$ for $\delta \le \min(2d,c).$
\end{enumerate}
\end{conj}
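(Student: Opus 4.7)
The plan is to prove the equality by showing that both sides are given by the same universal polynomials in the Chern numbers $L^2$, $L K_S$, $K_S^2$, $c_2(S)$ in each stated range. By its construction in \cite{GS}, $\widetilde N^{(S,L),\delta}(y)$ is by definition such a universal polynomial, so the task reduces to establishing that $N^{(S,L),\delta}(y)$ is also universal in this sense, and then pinning down the universal polynomial by matching enough values.

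For part (1), the $\delta$-very ample case on smooth toric surfaces, I would first extend \thmref{mainthm} (assuming \conjref{RefMult}) to conclude that $N^{(S,L),\delta}(y)$ has the shape of the right-hand side of \eqref{othergen} for some universal power series $B_i(y,q)$ and $DG_2(y,q)$. Since $\widetilde N^{(S,L),\delta}(y)$ enjoys exactly the same multiplicative structure with its own series $\widetilde B_i$, $\widetilde{DG}_2$, it suffices to match these series. The specialization $y=1$ together with the theorems of \cite{Tz}, \cite{KST} gives $B_i(1,q) = \widetilde B_i(1,q)$; the specialization $y=-1$ gives the analogous identity against tropical Welschinger numbers. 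Combining these with the explicit low-$\delta$ polynomials tabulated via \corref{refpol} should determine each Laurent coefficient in $y$ by interpolation, since the total degree in $y$ of $N_\delta(y)$ is bounded linearly in $\delta$. For parts (2)--(4), the sharper bounds $\delta \le 2d-2$ etc.\ go beyond the range $d,c\ge\delta$ covered by \thmref{mainthm}, so my plan is to run a dual induction: on the $N$ side using the Caporaso--Harris type recursion \thmref{Caporaso}, and on the $\widetilde N$ side using the recursion satisfied by the universal polynomials of \cite{GS}. The inductive step reduces to checking that the shift $L \mapsto L - H$ acts compatibly on both invariants, with base cases provided by the explicit node polynomials for very small $\delta$.

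The main obstacle is the mismatch between the algebro-geometric definition of $\widetilde N^{(S,L),\delta}(y)$ (via virtual classes on Hilbert schemes of points) and the combinatorial definition of $N^{(S,L),\delta}(y)$ (via long-edge graphs and tropical counts). Bridging the two requires either a refinement of the degeneration arguments of \cite{Tz}, \cite{KST} that tracks a $y$-weighting, or a conceptual identification of the two universal polynomials through their shared multiplicative generating-function structure. A secondary, but still substantial, hurdle is tightening the polynomiality range from $d\ge \delta$ down to $\delta \le 2d-2$; in the refined setting one must control how cancellations in the factors $[w(e)]_y$ interact with the template-boundary analysis of \cite{L}, \cite{LO}, a step which even in the unrefined case was technically delicate. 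A reasonable intermediate target would be a computer-assisted verification of (1)--(4) for all $\delta$ up to some explicit bound, exploiting \thmref{Caporaso} to compute the left-hand side and the formulas of \cite{GS} to compute the right-hand side, thereby at least reducing each part of the conjecture to a polynomial-interpolation statement of finite size.
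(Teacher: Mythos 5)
This statement is \conjref{torconj}, and the paper does not prove it: it is stated as a conjecture (quoting \cite[Conj.~80]{GS}), and the only support the paper offers is the computational verification of special cases in \corref{refpol} (e.g.\ part (2) for $\delta\le 17$, part (3) for $\delta\le 12$ on $\P^1\times\P^1$), obtained by combining the Caporaso--Harris recursion of \thmref{Caporaso} with the polynomiality statements of \thmref{mainthm}. So there is no paper proof to compare against; what you have written is a research plan for an open problem, and it should be judged as such.

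As a plan it correctly identifies the central obstruction --- bridging the algebro-geometric definition of $\widetilde N^{(S,L),\delta}(y)$ via Hilbert schemes with the tropical/combinatorial definition of $N^{(S,L),\delta}(y)$ --- but several of your proposed steps do not work as stated. First, the interpolation argument is broken: the specializations $y=1$ and $y=-1$ give only two data points, which cannot determine a symmetric Laurent polynomial in $y$ whose degree grows with $\delta$; there is no third specialization with independent geometric meaning available, so ``determining each Laurent coefficient in $y$ by interpolation'' fails for all $\delta\ge 2$. Second, the universality and multiplicativity of $N^{(S,L),\delta}(y)$ for general smooth toric surfaces is itself only \conjref{RefMult}; the paper proves polynomiality of the logarithm only for $\P^2$, $\P(1,1,m)$ and $\Sigma_m$ and only in ranges like $d\ge\delta$ (\thmref{mainthm}), well short of the bounds $\delta\le 2d-2$ in parts (2)--(4), so you cannot ``assume \conjref{RefMult}'' without circularity. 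Third, the proposed dual induction requires a Caporaso--Harris type recursion for the invariants $\widetilde N^{(S,L),\delta}(y)$ of \cite{GS}; no such recursion is known, and establishing one is essentially equivalent to the conjecture itself. Your suggested fallback --- computer-assisted verification for explicit low $\delta$ --- is precisely what the paper actually does in \corref{refpol}, and is the honest current state of the art.
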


\begin{rem}\label{refinvnodepol}
Note that by definition, if \conjref{torconj} is true, then $N_\delta(d;y)=\widetilde N^{d,\delta}(y)$ for all $d$, $\delta$, and
$N_\delta(\Sigma_m,dH+cF)(y)=\widetilde N^{(\Sigma_m,dH+cF),\delta}(y)$ for all $m,d,c,\delta$. 
This is because both sides are polynomials in $d$ (respectively $d,c$) with coefficients in $\Q[y]$, which coincide
for all sufficiently large $d$ (respectively for all sufficiently large $d,c$).
\end{rem}

In \cite[Conj.~67]{GS} also a generating function for the refined invariants $\widetilde N^{(S,L),\delta}(y)$ is conjectured (and thus by \remref{refinvnodepol} for the 
$N_\delta(d;y)$ and the $N_\delta((\Sigma_m,cF+dH);y)$) \cite[Conj.67]{GS}. We list a number of equivalent formulations. 

\begin{nota}
We start by introducing some notations about quasimodular forms and theta functions, and reviewing some standard facts, which we will use throughout the paper. 
Modular forms depend on a variable $\tau$ in the complex upper half plane, and have a Fourier development in terms of $q:=e^{2\pi i \tau}$. We will write them 
as functions $f(q)$, because we are only interested in the coefficients of their Fourier development. Similarly theta functions will be written as functions 
$g(y,q)$, for $y=e^{2\pi i z}$, with $z\in \C$ and $q=e^{2\pi i \tau}$.
The Eisenstein series 
$$G_{2k}(q)=-\frac{B_{2k}}{4k}+\sum_{n>0}\sum_{d|n} d^{2k-1} q^k$$ are for $2k\ge 4$ modular forms of weight $2k$ on $SL_2(\Z)$, whereas
$G_2(q)$ is only a quasimodular form of weight $2$ on $SL_2(\Z)$. 
The Dirichlet $\eta$-function and the discriminant $\Delta(q)$ are
$$\eta(q):=q^{1/24}\prod_{n>0}(1-q^n),\qquad \Delta(q)=\eta(q)^{24}=q\prod_{n>0}(1-q^n)^{24}.$$
 The discriminant is a cusp form of weight $12$ on $SL_2(\Z)$.
The operator $D:=q\frac{\partial}{\partial q}$ sends (quasi)modular forms of weight $2k$ to quasimodular forms of weight $2k+2$.
We denote two of the standard theta functions by
\begin{align*}
\theta(y)=\theta(y,q)&:=\sum_{n\in\Z} (-1)^n q^{\frac{1}{2}(n+\frac{1}{2})^2}y^{n+\frac{1}{2}}=q^{\frac{1}{8}}(y^{\frac{1}{2}}-y^{-\frac{1}{2}})\prod_{n>0} (1-q^n)(1-q^ny)(1-q^n/y),\\
\theta_2(y,q)&:=\sum_{n\in \Z} (-1)^nq^{n^2/2}y^n,
\end{align*}
and the theta zero value
$\theta_2(q^2):=\theta_2(0,q^2)=\sum_{n\in \Z} (-1)^m q^{n^2}=\frac{\eta(q)^2}{\eta(q^2)}.$
In addition to $D:=q\frac{\partial}{\partial q}$ we also consider $\vphantom{a}'=y\frac{\partial}{\partial y}$. 
Let 
\begin{align*}
\widetilde\Delta(y,q)&:= \frac{\eta(q)^{18}\theta(y)^2}{y-2+y^{-1}} =q \prod_{n=1}^{\infty}(1-q^n)^{20}(1-yq^n)^{2}
(1-y^{-1}q^n)^2,\\
\widetilde{DG}_2(y,q)&:=\sum_{m= 1}^\infty  \sum_{d | m} \frac{m}{d}[d]_y^2q^{m}, \quad D\widetilde{DG}_2(y,q):=\sum_{m= 1}^\infty  \sum_{d | m} \frac{m^2}{d}[d]_y^2 q^{m} .
\end{align*}
\end{nota}

\begin{conj} 
\label{genfun} %\cite[after Conj.~67]{GS}
There exist universal power series 
$B_1(y,q)$, $B_2(y,q)$ in $\Q[y,y^{-1}]\lbr q\rbr $, such that for all pairs $(S,L)$ of a smooth projective surface and a line bundle on $L$, we have
\begin{equation}\label{genfun1}\sum_{\delta\ge 0} \widetilde N^{(S,L),\delta}(y) (\widetilde {DG}_2)^\delta=
\frac{(\widetilde{DG}_2/q)^{\chi(L)}B_1(y,q)^{K_S^2}B_2(y,q)^{LK_S}}{(\widetilde\Delta(y,q)\cdot D\widetilde{DG}_2(y,q)/q^2)^{\chi(\oo_S)/2}}
\end{equation}
\end{conj}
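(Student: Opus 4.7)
The plan is to recast \conjref{genfun} as an explicit multiplicative identity in the four universal surface invariants and then to reduce it to the combinatorial multiplicativity established in Section 3, combined with matching against classical specialisations.

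First I would reorganise the right-hand side of \eqref{genfun1} as a four-fold product. Using $\chi(L)=\chi(\oo_S)+\tfrac12(L^2-LK_S)$ one rewrites it as $A_1(y,q)^{L^2}\,A_2(y,q)^{LK_S}\,B_1(y,q)^{K_S^2}\,A_4(y,q)^{\chi(\oo_S)}$, with $A_1=(\widetilde{DG}_2/q)^{1/2}$, $A_2=(\widetilde{DG}_2/q)^{-1/2}B_2$ and $A_4=(\widetilde{DG}_2/q)(\widetilde\Delta\cdot D\widetilde{DG}_2/q^2)^{-1/2}$. Since $\widetilde{DG}_2(y,q)=q+O(q^2)$ is invertible as a $q$-series, setting the formal variable $t:=\widetilde{DG}_2(y,q)$ translates \conjref{genfun} into the multiplicativity identity
\[
\sum_{\delta\ge 0}\widetilde N^{(S,L),\delta}(y)\,t^\delta \;=\; A_1^{L^2}\,A_2^{LK_S}\,B_1^{K_S^2}\,A_4^{\chi(\oo_S)},
\]
i.e.\ \conjref{RefMult} together with a specific closed-form guess for the four universal series.

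Second, I would import the combinatorial multiplicativity on the range where it is already known. Assuming \conjref{torconj}, \thmref{mainthm}(1) combined with \thmref{severisequence} identifies $Q^{(\Sigma_m,cF+dH),\delta}(y)$ as a $\Q[y^{\pm1}]$-linear combination of $1,c,d,cd,m,md,md^2$, which is exactly what the logarithm of the proposed product predicts after substituting $L^2=2cd+md^2$, $LK_S=-2c-md-2d$, $K_S^2=8$ and $\chi(\oo_S)=1$. Uniqueness of the exponential expansion in the four intersection numbers then fixes the restrictions of $A_1,A_2,B_1,A_4$ to $q$-series that are compatible with the data extracted from $\P^2$, $\P(1,1,m)$ and $\Sigma_m$.

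Third, I would pin down the closed-form ingredients by specialisation. At $y=1$ the formula degenerates to the G\"ottsche multiplicativity proven in \cite{Tz} and \cite{KST}, which identifies the Jacobi-form factors $\widetilde\Delta$, $\widetilde{DG}_2$ and $D\widetilde{DG}_2$ with their classical weight-specialisations; at $y=-1$ one recovers the Welschinger specialisation handled through \cite{Mik}. Combining these two collapses with low-order computations via the refined Caporaso--Harris recursion of \thmref{Caporaso} applied to $\P^2$ and the $\Sigma_m$ determines $A_1$ and $A_4$ uniquely and identifies them with the closed-form expressions predicted by \conjref{genfun}; the remaining universal series $B_1$ and $B_2$ are then constrained order by order in $q$ from the available refined Severi degrees.

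The main obstacle is that \thmref{mainthm} delivers multiplicativity only on $\P^2$, $\P(1,1,m)$ and $\Sigma_m$, where $K_S^2$ and $\chi(\oo_S)$ are constant and $LK_S$ is severely constrained; so long-edge-graph bookkeeping alone cannot pin $B_1$ down universally on all smooth projective surfaces. A genuine proof would need either an extension of the long-edge-graph multiplicativity to all smooth toric surfaces (generalising \cite{LO}) together with toric examples of unbounded $K_S^2$, or a lifting of the algebraic-cobordism argument of \cite{KST} to the refined setting, presumably through a refined Hilbert-scheme G\"ottsche--Yau--Zaslow formula. Both routes lie outside the combinatorial scope of the present paper, which is why \conjref{genfun} is stated here as a conjecture.
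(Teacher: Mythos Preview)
The key observation is that \conjref{genfun} is stated as a \emph{conjecture} in the paper, not a theorem; the paper offers no proof. What the paper does provide is (i) equivalent reformulations in \remref{reform}, and (ii) evidence: \corref{refpol} verifies the specialisations \eqref{P2gen} and \eqref{sigmangen} for small $\delta$ by combining the multiplicativity of \thmref{mainthm} with explicit Caporaso--Harris computations. You identify this correctly in your final paragraph, noting that a genuine proof lies outside the combinatorial scope of the paper.

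Your earlier paragraphs, however, read as though a proof is within reach if one just assembles the pieces, and this is misleading. Step two invokes \conjref{torconj}, itself unproven, so the reduction is conditional from the outset. More seriously, on $\P^2$, $\P(1,1,m)$ and $\Sigma_m$ one always has $\chi(\oo_S)=1$ and $K_S^2\in\{8,9\}$, so the long-edge-graph multiplicativity cannot separate $A_4$ from $B_1$ as independent universal series; this is exactly the obstacle you name at the end, and it is fatal rather than merely technical. Step three overstates what the $y=\pm1$ specialisations accomplish: the $y=1$ case (via \cite{Tz}, \cite{KST}) does give the full four-parameter multiplicativity unrefined, but it cannot recover the $y$-dependence of $A_1$ and $A_4$; the $y=-1$ Welschinger side has no analogue of \cite{Tz} or \cite{KST} available, so it does not supply an independent constraint beyond what the tropical recursion already gives.

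In short: there is no proof in the paper to compare against, and your proposal is not a proof but a diagnosis of why one is unavailable --- a diagnosis that is accurate in its last paragraph but somewhat obscured by the optimistic framing of the first three.
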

We  give two equivalent reformulations. 
$\widetilde {DG}_2$ as a power series in $q$ starts with $q$, let 
$$g(t):=g(y,t)=t + ((-y^2 - 4y - 1)/y)t^2 + ((y^4 + 14y^3 + 30y^2 + 14y + 1)/y^2)t^3+O(t^4)$$
be its compositional inverse. Write $g'(t):=\frac{\partial g}{\partial t}.$
\begin{rem}\label{reform}
Let $R\in \Q[y^{\pm 1}][[q]]$ be a formal power series. For polynomials $M^{(S,L),\delta}(y)\in \Q[y^{\pm 1}]$ the following three formulas are equivalent:
\begin{enumerate}
\item
$\displaystyle{
\sum_{\delta\ge 0} M^{(S,L),\delta}(y) (\widetilde {DG}_2)^\delta=
\frac{(\widetilde{DG}_2/q)^{\chi(L)}B_1(y,q)^{K_S^2}B_2(y,q)^{LK_S}}{(\widetilde\Delta(y,q)\cdot D\widetilde{DG}_2(y,q)/q^2)^{\chi(\oo_S)/2}}R(y,q)}$
\item $\displaystyle{\sum_{\delta\ge 0} M^{(S,L),\delta}(y) t^\delta=\frac{(t/g(t))^{\chi(L)}B_1(y,g(t))^{K_S^2}}
{B_2(y,g(t))^{-LK_S}}\left(\frac{g(t)g'(t)}{\widetilde\Delta(y,g)}\right)^{\chi(\oo_S)/2}R(y,g(t)),}$
\item For all $\delta \ge 0$  \\
$\displaystyle{M^{(S,L),\delta}(y)=\Coeff_{q^{(L^2-LK_S)/2}}\left[\widetilde{DG}_2(y,q)^{\chi(L)-1-\delta}
\frac{B_1(y,q)^{K_S^2}B_2(y,q)^{LK_S}D\widetilde{DG}_2(y,q)}{(\widetilde\Delta(y,q)\cdot D\widetilde{DG}_2(y,q))^{\chi(\oo_S)/2}}R(y,q)\right]}$
\end{enumerate}
\end{rem}
\begin{pf}
(2) is equivalent to (1) by noting that $D\widetilde{DG}_2(y,g(t))=\frac{g(t)}{g'(t)}\frac{\partial \widetilde{DG_2}(y,g(t))}{\partial t}= \frac{g(t)}{g'(t)}.$

Let $A$ be a commutative ring, and let $f\in A[[q]]$, $g\in q+qA[[q]]$. Then we get by the residue formula that 
$$f(q)=\sum_{l=0}^\infty g(q)^l\Coeff_{q^0}\left[\frac{f(q)Dg(q)}{g(q)^{l+1}}\right].$$ 
Applying this with $g(q)=\widetilde{DG}_2$ shows that  (1) is equivalent to (3).
\end{pf}

Part (2) of \remref{reform} shows in particular that according to \conjref{genfun} the $N^{(S,L),\delta}(y)$ have a generating function of the form
\eqref{prodSL}.
\begin{rem}\label{genmultipl}

We will in the future use the formula (3) of \remref{reform}. Note that this also has the following interpretation. 
Write $$A^{(S,L)}(y,q):=
\frac{B_1(y,q)^{K_S^2}B_2(y,q)^{LK_S}D\widetilde{DG}_2(y,q)}{(\widetilde\Delta(y,q)\cdot D\widetilde{DG}_2(y,q))^{\chi(\oo_S)/2}}.$$ 
Then the refined count of  curves in $|L|$ with only nodes as singularities satisfying  $k$ general point conditions is 
$\Coeff_{q^{(L(L-K_S)/2}}[\widetilde{DG}_2(y,q)^k A^{(S,L)}(y,q)]$. Thus it seems natural to expect the following general principle:
To each condition $c$ that we can impose at points  of $S$ to curves $C$ in $|L|$ (e.g. $C$ passing through a point with given multiplicity),
or just to points in $S$, 
(e.g. $S$ having a singular point) 
there corresponds a power  series $L_c\in \Q[y^{\pm 1}][[q]]$, such that, for $L$ sufficiently ample,
the refined count of curves in $|L|$ on $S$ satisfying conditions $c_1,\ldots,c_n$ is
$\Coeff_{q^{(L(L-K_S)/2}}[A^{(S,L)}(y,q)\prod_{i=1}^n L_{c_i} ]$. According to this principle the power series corresponding to passing through a point of $S$ would be $\widetilde{DG}_2$. 
In the second half of this paper we will give a number of instances of this principle. 
\end{rem}

By \remref{refinvnodepol} for $\P^2$ and rational ruled surfaces the conjecture says in particular
\begin{align}\label{P2gen}
&N_\delta(d;y)=\Coeff_{q^{(d^2+3d)/2}}\left[\widetilde{DG}_2(y,q)^{d(d+3)/2-\delta}
\frac{B_1(y,q)^{9}}{B_2(y,q)^{3d}}\left(\frac{D\widetilde{DG}_2(y,q)}{\widetilde\Delta(y,q)}\right)^{1/2}\right]\\
%\Coeff_{t^{\delta}}\left[\frac{(t/g(t))^{\binom{d+2}{2}}B_1(y,g(t))^{9}}
%{B_2(y,g(t))^{3d}}\left(\frac{g(t)g'(t)}{\widetilde\Delta(y,g)}\right)^{1/2}\right],\\
\label{sigmangen}&N_\delta((\Sigma_m,cF+dH);y)=\\ \nonumber&\Coeff_{q^{(d+1)(c+1+md/2)-1}}\left[
\frac{\widetilde{DG}_2(y,q)^{(d+1)(c+1+md/2)-1-\delta}B_1(y,q)^{8}}{B_2(y,q)^{2c+(m+2)d}}\left(\frac{D\widetilde{DG}_2(y,q)}{\widetilde\Delta(y,q)}\right)^{1/2}\right]
%Coeff_{t^{\delta}}\left[\frac{(t/g(t))^{(d+1)(c+1+md/2)}B_1(y,g(t))^{9}}{B_2(y,g(t))^{-2c-(m+2)d}}\left(\frac
%{g(t)g'(t)}{\widetilde\Delta(y,g(t))}\right)^{1/2}\right].
\end{align}
%In \cite{GS} we have instead the development
%$$\sum_{\delta\ge 0}N_\delta((S,L);y)(\widetilde{DG}_2)^\delta=\frac{(\widetilde{DG}_2/q)^{\chi(L)}B_1(y,q)^{K_S^2}B_2(y,q)^{LK_S}}
%{\big(\widetilde\Delta(y,g)\,D\widetilde{DG_2}\big)/q^2)\big)^{1/2}}.$$
%Specifying the invariants for $\P^2$ and $\Sigma_m$, substituting $g(t)$ for $q$ and using  that 
%$$D\widetilde {DG}_2(y,g(t))/g(t)=(\frac{d}{dq}\widetilde{DG}_2)(y,g(t))=\frac{1}{g'(t)},$$
% we get the above formulation. 
With $B_1(y,q)$, $B_2(y,g)$ given below modulo $q^{18}$ we have the following corollary.
\begin{cor}\label{refpol}
\begin{enumerate}
\item The formula \eqref{P2gen} and \conjref{torconj}(2) are true for $\delta\le 17$.
\item In case $m=0$ the formula \eqref{sigmangen}  and  \conjref{torconj}(2) is true for $\delta\le 12$.
\item The  formula \eqref{sigmangen}  and \conjref{torconj}(3) are true  for all $m$ and $\delta\le 8$. 
\end{enumerate}
\end{cor}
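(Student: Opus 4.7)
The plan is to combine the multiplicativity theorem \thmref{mainthm} with a direct computer verification via the Caporaso--Harris recursion \thmref{Caporaso}. By \remref{refinvnodepol}, each part of \conjref{torconj} amounts to the equality of two polynomials in the relevant intersection numbers: the refined node polynomial $N_\delta(\cdot;y)$ extracted from the tropical refined Severi degrees, and the quantity $\widetilde N^{(S,L),\delta}(y)$ predicted by the right-hand side of \eqref{P2gen} or \eqref{sigmangen}. Since by \thmref{mainthm}(4) the logarithm $Q^{d,\delta}(y)$ is a polynomial of degree $2$ in $d$ for $d\ge \delta$, the polynomial $N_\delta(d;y)$ is determined by its values at finitely many $d$; analogously for $\Sigma_m$ via \thmref{mainthm}(1) and (3). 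This reduces each of the three claims to a finite verification.

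The steps I would carry out are as follows. First, run the Caporaso--Harris recursion \eqref{refrec} to tabulate $N^{d,\delta}(y)$ for a range of $d\ge \delta$ (for each $\delta\le 17$), and $N^{(\Sigma_m,cF+dH),\delta}(y)$ for a range of triples $(m,c,d)$ with $c,d\ge\delta$ (specializing $m=0$ for part (2), and varying $m$ for part (3)). Using \thmref{mainthm} to bound the functional form of $Q^\delta_{s(c,m,d)}(y)$ as a $\Q[y^{\pm 1}]$-linear combination of the listed monomials, reconstruct the polynomials $N_\delta(d;y)$ and $N_\delta(\Sigma_m,cF+dH;y)$ by interpolation. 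Second, expand the right-hand sides of \eqref{P2gen} and \eqref{sigmangen} using the explicit $B_1(y,q)$, $B_2(y,q)$ truncated modulo $q^{18}$; this is enough truncation, because the coefficient extracted is $\Coeff_{q^{L(L-K_S)/2}}$, and for the stated ranges of $\delta$ and for the $(d,c,m)$ actually needed in the interpolation step, one has $L(L-K_S)/2<18$. Third, compare the two lists and verify equality; agreement at sufficiently many interpolation nodes forces equality of polynomials, which simultaneously proves the generating-function formula for the given $B_i$ and the corresponding case of \conjref{torconj}.

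The main obstacle is essentially computational rather than conceptual: the Caporaso--Harris recursion for refined Severi degrees grows rapidly in $\delta$ and in the size of the line bundle, and one needs enough data points, with $c$ and $d$ large enough to be in the regime where \thmref{mainthm} applies, to pin down the interpolants. The bounds $17$, $12$, $8$ in the three parts of the corollary reflect precisely the thresholds at which these computations remained feasible, together with the fact that part (3) involves an extra variable $m$ (and hence more monomials from \thmref{mainthm}(1)) than parts (1) and (2). A subtler point to check is that the sample triples $(m,c,d)$ chosen in part (3) all satisfy $c\ge\delta$ and $d\ge\delta$, so that \thmref{mainthm}(1) genuinely applies and the resulting interpolation is justified; this constrains the design of the computation but does not affect its correctness.
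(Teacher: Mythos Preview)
Your overall strategy coincides with the paper's: compute enough refined Severi degrees via the Caporaso--Harris recursion, use \thmref{mainthm} to interpolate the logarithmic polynomials $Q_\delta$, and compare with the right-hand side of the generating-function formula. For part~(3) the paper is a bit more economical than ``varying $m$'': since by \thmref{mainthm}(1) the function $Q_\delta((\Sigma_m,cF+dH);y)$ is \emph{linear} in $m$, the $m$-independent coefficients are already supplied by the $m=0$ computation of part~(2), and a single further value $m=1$ determines the coefficients of $m,md,md^2$.

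However, one justification in your proposal is wrong. You assert that $B_1,B_2$ truncated modulo $q^{18}$ suffice because ``for the $(d,c,m)$ actually needed in the interpolation step, one has $L(L-K_S)/2<18$''. This is false: for $\P^2$ with $\delta=17$ \thmref{mainthm}(4) forces $d\ge 17$, so $L(L-K_S)/2=d(d+3)/2\ge 170$. The correct reason is structural, not numerical. In \eqref{P2gen} the factor $\widetilde{DG}_2(y,q)^{\chi(L)-1-\delta}$ lies in $q^{L(L-K_S)/2-\delta}\,\Q[y^{\pm1}][[q]]$ (since $\widetilde{DG}_2\in q\,\Q[y^{\pm1}][[q]]$ and $\chi(L)-1=L(L-K_S)/2$ for these rational surfaces), so extracting the coefficient of $q^{L(L-K_S)/2}$ only involves the coefficients of $B_1,B_2$ up through $q^{\delta}$. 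Equivalently, by \remref{reform}(2) the coefficient of $t^\delta$ in $N(S,L)(y,t)$ depends on $B_i(y,g(t))$ only modulo $t^{\delta+1}$, hence on $B_i(y,q)$ only modulo $q^{\delta+1}$. That is why truncation at $q^{18}$ is adequate for all $\delta\le 17$, irrespective of how large $d$ (or $c$) has to be taken at the interpolation nodes.
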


%\begin{LG} Also talk about the bounds\end{LG}

\begin{proof} 
(1). Using the Caporaso-Harris recursion, we computed the 
$N^{d,\delta}(y)$ for $d\le 19$, $\delta\le 19$. This also computes the $Q^{d,\delta}$ for $d\le 19$, $\delta\le 19$. 
Part (4) of \thmref{mainthm} gives $Q^{d,\delta}=Q_\delta(d)$ for $d\ge \delta$. As $Q_\delta(d;y)$ is a polynomial of degree
$2$ in $d$, the computation above determines $Q_\delta(d;y)$ and thus the $N_\delta(y;d)$ for $\delta\le 17$, giving the  claim.

(2) and (3). Using again the Caporaso-Harris recursion we computed the 
$N^{(\P^1\times\P^1,cF+dH),\delta}(y)$ for $c,d\le 13,\delta\le 13$. Again this gives the $Q^{(\P^1\times\P^1,cF+dH),\delta}$ for $c,d\le 13$, $\delta\le 13$. 
By part (2) of \thmref{mainthm} we have that $Q^{(\P^1\times\P^1,cF+dH),\delta}=Q_\delta((\P^1\times\P^1,cF+dH);y)$ for $c,d\ge \delta$. As $Q_\delta((\P^1\times\P^1,cF+dH);y)$ is a polynomial of bidegree
$(1,1)$ in $c,d$, the computation above determines $Q_\delta(((\P^1\times\P^1,cF+dH);y)$ and thus the $N_\delta((\Sigma_0,cF+dH);y)$ for $\delta\le 12$.
As $Q_\delta((\Sigma_m,cF+dH);y)$ is a linear combination of $1$, $c$, $cd$, $m$, $md$, $md^2$, in order to prove (2) we only need to determine the coefficients of $m$, $md$, $md^2$.
For this we can restrict to the case $m=1$, 
We computed $N^{(\Sigma_1,cF+dH),\delta}(y)$ for $c,\le 9$, $d\le 10$. This determines the coefficients of $m$, $md$, $md^2$ of  $Q_\delta((\Sigma_m,cF+dH);y)$ for $\delta\le 8$, giving the claim. 
\end{proof}

We list the leading terms of $B_1(y,q)$ and $B_2(y,q)$, with omitted terms determined by symmetry.
 {\small\begin{align*}
&B_1(y,q)=1 - q - (y +3 + 1/y)q^2 
+ (y^2 + 10y + 17 +\ldots)q^3  
 - (18y^2 + 87y + 135 + \ldots)q^4\\&
+ (12y^3 + 210y^2 + 728y + 1061 +\ldots)q^5 - (2y^4 + 259y^3 + 2102y^2 + 5952y + 8236 + \ldots)q^6 \\&+ (162y^4 + 3606y^3  + 19668y^2 + 48317y + 64253+\ldots)q^7 
- (47y^{5} + 3789y^4 + 41999y^3+ 177800y^2\\&+ 392361y + 505678+\ldots )q^8  
+ (5y^{6} + 2416y^{5}  + 60202y^{4} + 445989y^3 + 1576410y^2 + 3197831y 
\\&+ 4018919+\ldots)q^9 -
(896y^{6} + 58504y^{5} + 793194y^{4} + 4483755y^3 + 13818256y^2 +26192369y\\& + 32243357+\ldots)q^{10}
 + (176y^7 + 38236y^6+ 1017512y^5 + 9382867y^4 + 43520558y^3 + 120325637y^2 \\& + 215688799y+ 260959201 + \ldots)q^{11} 
 - (14y^8 + 16393y^7 + 944954y^6 + 14738959y^5 
+103623419y^4 \\&+ 412518547y^3 + 1043940859y^2 + 1785764779y + 2129062780+\ldots)q^{12} 
+ (4384y^8 + 631224y^7\\& + 17534642y^6 + 190488676y^5 + 1092093647y^4 + 3845977628y^3+ 9041155627y^2 + 14862430058y \\&+ 17497499443 +\ldots )q^{13}
 - (658y^9 + 298228y^8 + 15816382y^7 - 273455570y^6 + 2279829046y^5\\& + 11131917064y^4+ 35435770399y^3 + 78257451025y^2 + 124310761787y + 144758147754+\ldots )q^{14} 
 \\&+ (42y^{10} + 96604y^9 + 10758628y^8 + 308060184y^7 + 3800583626y^6 + 25834889754y^5 \\&+ 110712006552y^4 + 323710356925y^3 + 677516096371y^2 + 1044598390812y + 1204824660925+\ldots)q^{15}\\&
 -(20284y^{10} + 5452043y^9 + 272316274y^8 + 5094738491y^7+ 48707795806y^6+ 281165238614y^5 \\& + 1080786159810y^4 + 2938608835049y^3 + 5869829083826y^2 + 8816117002571y + 10082791437552+\ldots)q^{16} \\&
+ (2472y^{11} + 2015609y^{10} + 188032406y^9 + 5506997958y^8+ 75206548205y^7 + 588088410636y^6\\& + 2967196356618y^5 + 10400483736235y^4 + 26552849592007y^3 + 50907878544033y^2 + 74707191955540y \\&+ 84801344804750+\ldots)q^{17} +
O(q^{18}),\\
 &B_2(y,q)=\frac{1}{(1-yq)(1-q/y)}\big(1 + 3q - (3y + 1 + 3/y)q^2 + (y^2 + 8y + 18 +\ldots)q^3\\&- (13y^2+ 53y + 76 + \ldots)q^4 + (7y^3 + 100y^2 + 316y + 455 + \ldots)q^5- (y^4 + 112y^3 + 779y^2\\& +2076y + 2819 +\ldots)q^6 + (67y^4 + 1243y^3 + 6129y^2 + 14386y + 18870 +\ldots)q^7- (19y^{5} \\&+ 1281y^4 + 12417y^3 + 48879y^2 + 104034y + 132579 +\ldots)q^8 + (2y^{6} + 822y^{5} + 17542y^{4}\\& + 117829y^3 + 393703y^2 + 775411y + 965540 +\ldots )q^9 -(310y^{6} + 17206y^{5} + 207074y^{4}\\& + 1085712y^3 + 3197506y^2 + 5913778y + 7223539+\ldots )q^{10}
  +(62y^7 + 11505y^6 + 267658y^5 \\&+ 2249872y^4 + 9825927y^3 + 26163595y^2 + 45935572y + 55208836 + \ldots)q^{11} - (5y^8 + 5076y^7 \\&+ 253785y^6 +3555348y^5 + 23210920y^4 + 87929247y^3 + 215557414y^3 + 362229349y \\&+ 429395117 + \ldots)q^{12} + (1397y^8 + 174456y^7 + 4304488y^6 + 42877083y^5 + 231296838y^4 \\&+ 781220881y^3+ 1787129788y^2 + 2892830316y + 3388742192 + \ldots )q^{13} - (215y^9 + 85117y^8 \\&+ 3983060y^7 + 62465678y^6 + 484877903y^5+ 2249516882y^4 + 6909207376y^3+ 14901830113y^2 \\&+ 23353834274y + 27076007072+\ldots)q^{14} + (14y^{10} + 28472y^9 + 2793096y^8 + 71942817y^7 \\&+ 818536892y^6 + 5240193024y^5 + 21495922606y^4 + 60931593665y^3 + 124910088474y^2 \\&+ 190304808803y + 218642432495 + \ldots)q^{15} - (6158y^{10} + 1462435y^9 + 65354234y^8 \\&+ 1118442331y^7 + 9987960061y^6 + 54777796045y^5 + 202738958803y^4 + 536439701989y^3 \\&+ 1052049129591y^2 + 1563445962327y + 1781883877192+\ldots )q^{16} + (770y^{11} + 558612y^{10} \\&+ 46524657y^9 + 1238412474y^8 + 15681201140y^7 + 115681622517y^6 + 558367283967y^5 \\&+ 1893273288345y^4 + 4718572145488y^3 + 8899835406922y^2 + 12937087920811y \\&+ 14639451592197 +\ldots)q^{17} +O(q^{18})\big).\\
\end{align*}}

As noted above, the refined Severi degrees $N^{(S,L),\delta}(y)$ specialize at $y=1$ to the tropical Welschinger numbers $W^{(S,L),\delta}$. 
We specialize the above conjectures of \cite{GS} to the tropical Welschinger numbers. As the Caporaso-Harris recursion for the tropical Welschinger numbers is computationally much more efficient than that for the refined Severi degrees, the conjectures for the tropical Welschinger numbers can be proven for much higher 
$\delta$.

Let
$\eta(q):=q^{1/24}\prod_{n>0}(1-q^n)$  the Dirichlet eta function, $G_2(q):=-\frac{1}{24} +\sum_{n>0}\sum_{d|n} d q^n$ be the Eisenstein series,  and write $$
\overline G_2(q):=\widetilde{DG}_2(-1,q)=G_2(q)-G_2(q^2)
=\sum_{n>0} \left(\sum_{d|n, \ d \text{ odd}} \frac{n}{d} \right)q^n.$$
%Let 
%$$\overline g=\overline g(t)=t - 2t^2 + 4t^3 - 4t^4 - 22t^5 + O(t^6)$$
%be the compositional inverse of $\overline G_2(q)$, and let again $\overline g'(t)=\frac{d\overline g(t)}{dt}$. 
We note that $\widetilde{DG_2}(-1,q)=\overline G_2(q)$, and $\widetilde\Delta(-1,q)=\eta(q)^{16}\eta(q^2)^4$.
We write $\overline B_1(q):=B_1(-1,q)$, $\overline B_2(q):=B_2(-1,q)$.
 \conjref{torconj} specializes to the following (see also \cite{GS}).

\begin{conj}\label{GSPSigmaW}
\begin{align}\label{WP2gen}
&W_\delta(d)=\Coeff_{q^{(d^3+3d)/2}}\left[\overline G_2(q)^{d(d+3)/2-\delta}
\frac{\overline B_1(q)^{9}(D\overline G_2(q))^{1/2}}{\overline B_2(q)^{3d}\eta(q)^8\eta(q^2)^2}\right],\\
%\Coeff_{t^{\delta}}\left[\frac{(t/\overline g(t))^{\binom{d+2}{2}}B_1(-1,\overline g(t))^{9}}{B_2(-1,\overline g(t))^{3d}}\frac{(\overline g(t)\overline g'(t))^{1/2}}
%{\eta(\overline g(t))^{8}\eta(\overline g(t)^2)^2}\right],\\
&\label{Wsigmangen}W_\delta((\Sigma_m,cF+dH))=\Coeff_{q^{((d+1)(c+1+md/2)-1}}\left[
\frac{\overline G_2(q)^{(d+1)(c+1+md/2)-1-\delta}\overline B_1(q)^{8}(D\overline G_2(q))^{1/2}}{\overline B_2(q)^{2c+(m+2)d}\eta(q)^8\eta(q^2)^2}\right]
\end{align}
\end{conj}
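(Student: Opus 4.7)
The plan is to mimic the strategy of Corollary~\ref{refpol}, exploiting the fact that the Caporaso--Harris recursion of Theorem~\ref{Caporaso} simplifies drastically at $y=-1$: the quantum numbers $[k]_{-1}$ are $0$ or $\pm1$, so the relative Welschinger numbers $W^{(S,L),\delta}(\alpha,\beta):=N^{(S,L),\delta}(\alpha,\beta)(-1)$ are integers rather than Laurent polynomials in $y$, and no polynomial arithmetic is needed during the recursion. This allows one to push the recursion to much larger values of $(d,c,m,\delta)$ than in the refined setting, and thereby verify Conjecture~\ref{GSPSigmaW} up to a significantly higher cogenus.

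First I would specialize Theorem~\ref{mainthm} at $y=-1$. This yields: for $d\ge\delta$, the logarithm $Q^{W,d,\delta}:=Q^{d,\delta}(-1)$ is a polynomial of degree $\le 2$ in $d$; for $c,d\ge\delta$, the quantity $Q^{W,(\Sigma_m,cF+dH),\delta}$ is a $\Q$-linear combination of the seven monomials $1,c,d,cd,m,md,md^2$. Hence only three (respectively seven) data points in the stable range are needed to reconstruct the full polynomial for each $\delta$, and by exponentiation to recover $W^{d,\delta}$ and $W^{(\Sigma_m,cF+dH),\delta}$ for all admissible parameters.

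Second, I would run the recursion \eqref{refrec} at $y=-1$ to produce a table of $W^{d,\delta}$ for $d,\delta$ up to the desired bound, together with a table of $W^{(\Sigma_m,cF+dH),\delta}$ for $m\in\{0,1\}$ (or a second value of $m$ if the coefficient of $md^2$ is not otherwise pinned down) and for $c,d$ in the stable range. Fitting these to the polynomial forms supplied by the specialized Theorem~\ref{mainthm} determines $Q^{W,d,\delta}$ and $Q^{W,(\Sigma_m,cF+dH),\delta}$ completely for all $\delta$ below the chosen bound.

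Third, I would expand the right-hand sides of \eqref{WP2gen} and \eqref{Wsigmangen} as power series in $q$ modulo a sufficiently high power, using $\overline G_2(q)$, $\eta(q)$, $\eta(q^2)$, and the specializations $\overline B_1(q)=B_1(-1,q)$, $\overline B_2(q)=B_2(-1,q)$, extract the relevant coefficient of $q^{\chi(L)-1}$, and compare with the computed Welschinger values. Agreement across the full parameter set then verifies the conjecture up to the chosen $\delta$. The main obstacle is purely computational: one must know $\overline B_1(q)$ and $\overline B_2(q)$ to high enough order, which in practice means either extending the series listed in Section~4 (and specializing) or independently reconstructing $\overline B_1,\overline B_2$ from Welschinger data for surfaces with varying $K_S^2$ and $LK_S$. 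The efficiency of the $y=-1$ recursion makes this extension feasible well past the $\delta\le 17$ reached in Corollary~\ref{refpol}.
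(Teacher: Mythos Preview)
The statement is a \emph{conjecture}, so the paper does not prove it; what the paper does is verify it computationally for bounded $\delta$ in the Corollary immediately following it. Your proposal is exactly this verification strategy and matches the paper's argument essentially step for step: specialize the Caporaso--Harris recursion to $y=-1$ (so the recursion runs over integers and can be pushed much further), compute $W^{d,\delta}$ and $W^{(\Sigma_m,cF+dH),\delta}$ for enough values in the stable range, invoke the $y=-1$ specialization of Theorem~\ref{mainthm} to determine the degree-$2$ polynomials $Q^{d,\delta}(-1)$ and $Q^{(\Sigma_m,cF+dH),\delta}(-1)$, and compare against the $q$-expansion of the right-hand sides using $\overline B_1(q),\overline B_2(q)$ to the required order. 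The paper records the concrete ranges it reached ($d\le 32$, $\delta\le 33$ for $\P^2$; $c,d\le 21$, $\delta\le 22$ for $\P^1\times\P^1$; $c,d,\delta\le 13$ for $\Sigma_1$), yielding the bounds $\delta\le 30$, $\delta\le 20$, $\delta\le 11$ in the Corollary, but otherwise your outline is the same method.
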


With $\overline B_1(q)$, $\overline B_2(q)$ given below modulo $q^{31}$ we have the following corollary.
\begin{cor}
\begin{enumerate}
\item The formula \eqref{WP2gen} is true for $\delta\le 30$. Furthermore for $\delta\le 30$ and $d\ge \delta/3+1$ we have $W^{d,\delta}=W_\delta(d)$.
\item On $\P^1\times \P^1$ the  formula \eqref{Wsigmangen} is true for  $\delta\le 20$. Furthermore for $\delta\le 20$ and $\delta\le \min(20,3c,3d)$, we have
$W^{(\P^1\times\P^1,cF+dH),\delta}=W_\delta(\P_1\times \P_1,cF+dH)$.
\item For $m>0$, the  formula \eqref{Wsigmangen} is true for $\delta\le 11$. Furthermore for $\delta\le \min(11,3d,c)$ we have 
$W^{(\Sigma_m,cF+dH),\delta}=W_\delta(\Sigma_m,cF+dH)$.
\end{enumerate}
\end{cor}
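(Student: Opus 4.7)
The plan is to imitate the proof of \corref{refpol}, but specialized at $y=-1$ where the Caporaso--Harris recursion of \thmref{Caporaso} becomes dramatically cheaper: the quantum numbers $[k]_{-1}$ vanish for even $k$, collapsing most of the inner sums. This permits computing $W^{d,\delta}$, $W^{(\P^1\times\P^1,cF+dH),\delta}$ and $W^{(\Sigma_m,cF+dH),\delta}$ in a much larger range than in the refined setting. First I would run the recursion to tabulate $W^{d,\delta}$ for $d$ up to some $d_{\max}\gtrsim 30$ and $\delta\le 30$, tabulate $W^{(\P^1\times\P^1,cF+dH),\delta}$ for $c,d$ up to roughly $20$ and $\delta\le 20$, and tabulate $W^{(\Sigma_m,cF+dH),\delta}$ for a small value of $m>0$ (to isolate the coefficients in $m$) with $c,d$ up to about $11$ and $\delta\le 11$.

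Second, I would invoke \thmref{mainthm} at $y=-1$. Part (4) says $Q^{d,\delta}(-1)$ is a polynomial of degree at most $2$ in $d$ for $d\ge \delta$; part (2) gives the shape $\alpha+\beta(c+d)+\gamma cd$ in the $\P^1\times\P^1$ case; part (1) gives the shape $\alpha+\beta c+\gamma d+\kappa cd+\mu m+\nu md+\rho md^2$ in the $\Sigma_m$ case. After taking formal logarithms of the tabulated $W$-generating functions, finitely many values of $(c,d,m)$ determine the relevant polynomials $W_\delta(d)$, $W_\delta((\P^1\times\P^1,cF+dH))$ and $W_\delta((\Sigma_m,cF+dH))$ uniquely. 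I would then compare them, coefficient by coefficient, to the right-hand sides of \eqref{WP2gen} and \eqref{Wsigmangen} evaluated with the listed truncations of $\overline B_1(q)$ and $\overline B_2(q)$ modulo $q^{31}$; since the right-hand sides are by construction polynomials of the same allowed shape, any agreement throughout the tabulated range forces equality as polynomials.

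For the sharper polynomiality range $d\ge \delta/3+1$ (and analogously $3c,3d$ in parts (2),(3)), one refines the proof of \thmref{mainthm}. A template $\Gamma$ contributes to $Q^\delta_\beta(-1)$ only when every weight $w(e)$ is odd. In that case each edge $e$ satisfies $\ell(e)w(e)-1\ge 3\ell(e)-1$ when $w(e)\ge 3$, while weight-$1$ edges in a template have $\ell(e)\ge 2$, hence $\ell(e)w(e)-1\ge \ell(e)$. Summing over edges and using that a template covers every intermediate vertex gives the Welschinger-refined bound $\ell(\Gamma)-\epsilon_1(\Gamma)\le \delta/3$. Plugging this into the proofs of \thmref{mainthm}(1)--(4) replaces the hypothesis $d\ge \delta$ (resp.\ $c\ge\delta$) by $d\ge \delta/3+1$ (resp.\ $c\ge\delta/3+1$), yielding the stated ranges. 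The main obstacle is computational rather than conceptual: carrying the Caporaso--Harris recursion on $\Sigma_m$ far enough to pin down the $m$-dependent coefficients for $\delta$ close to the stated upper bounds, and producing $\overline B_1(q)$ and $\overline B_2(q)$ to order $q^{30}$ with enough independent test cases $(c,d,m)$ to over-determine the identities.
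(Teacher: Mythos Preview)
Your overall strategy---run the Caporaso--Harris recursion at $y=-1$, take logarithms, and use \thmref{mainthm} to pin down the degree-$2$ polynomials from finitely many values---is exactly what the paper does (it is literally the proof of \corref{refpol} with the larger data set $d\le 32$, $\delta\le 33$ for $\P^2$, etc.). So the first two paragraphs of your proposal are correct and match the paper's argument.

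The genuine problem is your third paragraph, the theoretical argument for the sharpened threshold $d\ge \delta/3+1$. Your inequality for weight-$1$ edges is simply false: if $w(e)=1$ then $\ell(e)w(e)-1=\ell(e)-1$, which is \emph{not} $\ge \ell(e)$. More to the point, the conclusion $\ell(\Gamma)-\epsilon_1(\Gamma)\le \delta/3$ fails already for the template consisting of a single weight-$1$ edge of length $\ell\ge 2$: here $\delta=\ell-1$, $\epsilon_1(\Gamma)=1$, and $\ell(\Gamma)-\epsilon_1(\Gamma)=\ell-1=\delta$, not $\delta/3$. This template has odd weight and nonzero Welschinger multiplicity, so it does contribute to $Q^\delta_\beta(-1)$. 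Consequently the argument of \thmref{mainthm} cannot be improved in the way you suggest, and there is no a priori polynomiality for $d\ge \delta/3+1$.

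The paper does not attempt any such theoretical refinement. The sharper range is obtained purely by direct verification: once the polynomial $W_\delta(d)$ is determined from the values at $d\ge \delta$ (where \thmref{mainthm}(4) applies), one simply checks, using the same table of computed $W^{d,\delta}$ for $d\le 32$, that $W^{d,\delta}=W_\delta(d)$ also holds in the finitely many cases $\lceil \delta/3\rceil+1\le d<\delta$ with $\delta\le 30$; likewise for parts (2) and (3). You should replace your third paragraph with this finite computational check.
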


\begin{proof} 
(1) Using the Caporaso-Harris recursion, we computed to the 
$W^{d,\delta}$ for $d\le 32$, $\delta\le 33$. This also computes the $Q^{d,\delta}(-1)$ for $d\le 32$, $\delta\le 33$. 
The same argument as in the proof of \corref{refpol} shows (1).
Using again the Caporaso-Harris recursion we computed the 
$W^{(\P^1\times\P^1,cF+dH),\delta}$ for $c,d\le 21,\delta\le 22$, and  computed $W^{(\Sigma_1,cF+dH),\delta}(y)$ for $c,d,\delta\le 13$. The same argument as in the proof of \corref{refpol} gives (2) and (3).
\end{proof}

{\small \begin{align*}
\overline B_1&(q)=1 - q- q^2 - q^3 + 3q^4 + q^5 - 22q^6 + 67q^7 - 42q^8 - 319q^9 + 1207q^{10}- 1409q^{11} \\&- 3916q^{12} + 20871q^{13} - 34984q^{14}- 37195q^{15} + 343984q^{16} - 760804q^{17} 
- 81881q^{18} \\&+ 5390386q^{19} - 15355174q^{20} + 8697631q^{21} + 79048885q^{22} 
- 293748773q^{23} + 329255395q^{24} \\&+ 1041894580q^{25} - 5367429980q^{26} + 8780479642q^{27} + 10991380947q^{28} \\& - 93690763368q^{29} + 203324385877q^{30}+O(q^{31}),\\
\overline B_2&(q)=1 + q + 2q^2 - q^3 + 4q^4 + 2q^5 - 11q^6 + 24q^7 + 4q^8 - 122q^9 + 313q^{10} - 162q^{11} \\&- 1314q^{12}+ 4532q^{13} - 4746q^{14}- 13943q^{15} + 68000q^{16} - 105786q^{17} - 124968q^{18} \\&+ 1025182q^{19} - 2139668q^{20} - 443505q^{21} + 15157596q^{22} - 41007212q^{23} + 19514894q^{24} \\&+ 214218876q^{25} - 755331892q^{26} + 780656576q^{27} + 2776494907q^{28}  \\&- 13420432234q^{29} + 20749875130q^{30} +O(q^{31}).
\end{align*}
}

\section{Correction term for singularities}

%\begin{LG} Explain the philosophy of this, also include the contributions of other singularities, e.g. $A_n$ (saying they are the same as with Liu Osserman (also refined)), and some remarks 
%about more general singularities.\end{LG}

In this section we want to extend the above results and conjectures  to surfaces with singularities.
This section is partially motivated by the paper \cite{LO}, where this question is studied for the non-refined invariants for toric surfaces with rational double points.
We have conjectured above and given  evidence that there exist generating functions for the refined node polynomials on smooth toric surfaces $S$,
of the form $A_1^{L^2}A_2^{LK_S}A_3^{K_S^2}A_4^{\chi(\oo_S)}$ for universal power series $A_i\in \Q[y^{\pm 1}][[q]]$. 
It seems natural to conjecture that this extends to singular surfaces in the following form:
for every analytic type of singularities $c$ there is a universal power series $F_c(y,q)$ and the generating function for a singular surface $S$ is 
$A_1^{L^2}A_2^{LK_S}A_3^{K_S^2}A_4^{\chi(\oo_S)}\prod_c F_c^{n_c}$, where $n_c$ is the number of singularities of $S$ of type $c$.
For the case of toric surfaces given by $h$-transversal lattice polygons with only rational double points this problem has been solved in \cite{LO} for the
(non-refined) Severi degrees.
 
We start out by formulating a  conjecture for general singular toric surfaces, and then give more precise results for specific singularities. For rational double points
we conjecture that somewhat surprisingly the power series $F_c(y,q)$ is independent of $y$. In particular this says that the correction factor for $A_n$-singularities, determined in \cite{LO} for the Severi degrees, is the same for the Severi degrees and the tropical Welschinger invariants. 

%\begin{defn}
%Let $S$ be a normal projective toric surface. Let $L$ be a line bundle on $S$. The refined Severi degree of $(S,L)$ is the sum
%We denote $N^{(S,L)}_\delta(y)$...
%\end{defn}

Now let $S$ be a normal toric surfaces. We want to formulate  a conjecture about the refined Severi degrees $N^{(S,L),\delta}(y)$.
Note that the tropical curves counted in $N^{(S,L),\delta}(y)$ are not required to pass through any of the singular points of $S$. 
%So at least if 
%$L$ is sufficiently ample on $S$ they will not pass though any singular point. 
One can also reformulate the same conjecture in terms of the minimal resolution of $S$, i.e. a resolution $\pi:\widehat S\to S$, which contains no $(-1)$ curves in the fibres of $\pi$.

\begin{conj}\label{singsurfconj}
For every analytic type of singularities $c$ there  are formal power series 
$F_c\in \Q[y^{\pm 1}][[q]]$, $\widehat F_c\in \Q[y^{\pm 1}][[q]]$ such that the following hold.
Let $(S,L)$ be a pair of a projective toric surface and a  toric line bundle on $S$. Let $\widehat S$ be a minimal toric resolution of $S$ and denote by $L$ also the pullback of $L$ to $\widehat S$. Define $N^{(\widehat S,L),\delta}(y):=N^{(S,L),\delta}(y)$. 
If  $L$ is $\delta$-very ample on $S$, then 
\begin{align}
\label{Fc}N^{(S,L),\delta}(y)&=\Coeff_{q^{L(L-K_S)/2}}\left[\frac{\widetilde{DG}_2(y,q)^{\chi(L)-1-\delta}B_1(y,q)^{K_S^2}}{B_2(y,q)^{-LK_S}}\left(\frac{D\widetilde{DG}_2(y,q)}{\widetilde \Delta(y,q)}\right)^{1/2}\prod_c F_c(y,q)^{n_c}\right],\\
\label{Fcbar}N^{(\widehat S,L),\delta}(y)&=\Coeff_{q^{L(L-K_{\widehat S})/2}}\left[\frac{\widetilde{DG}_2(y,q)^{\chi(L)-1-\delta}B_1(y,q)^{K_{\widehat S}^2}}{B_2(y,q)^{-LK_{\widehat S}}}\left(\frac{D\widetilde{DG}_2(y,q)}{\widetilde \Delta(y,q)}\right)^{1/2}\prod_c \widehat F_c(y,q)^{n_c}\right].
\end{align}
Here $c$ runs through the analytic types of singularities of $S$, and $n_c$ is the number of singularities of $S$ of type $c$.
\end{conj}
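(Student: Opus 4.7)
The plan is to lift the long edge graph machinery of Section 3 from smooth toric surfaces to singular ones, isolate a local contribution at each singular point, and then check that this contribution is universal.

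First, I would observe that \eqref{Fc} and \eqref{Fcbar} are formally equivalent. If $\pi\colon \widehat S\to S$ is the minimal resolution, then $K_{\widehat S}^2-K_S^2$ and $\chi(\oo_{\widehat S})-\chi(\oo_S)$ decompose as a sum of local invariants, one at each singularity, so $\widehat F_c$ and $F_c$ differ by an explicit power of $B_1(y,q)$ and of $\bigl(D\widetilde{DG}_2/\widetilde\Delta\bigr)^{1/2}$ depending only on the analytic type $c$. Thus it suffices to prove one formulation, and \eqref{Fcbar} is the natural target since the long edge graph calculation is cleanest when compared against the smooth prediction on $\widehat S$.

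Second, I would extend \thmref{Refinedfloor} and the bijection with strictly $\beta$-allowable long edge graphs used in \thmref{severisequence} to the broader class of $h$-transversal lattice polygons whose toric surface may be singular. The proof of \thmref{Refinedfloor} in \cite{BG} does not use smoothness, so the floor-diagram count is still valid. In the long edge graph translation, the divisibility data of the lattice vectors along $\partial\Delta$ appear as local deviations of the indexing sequence $\beta$ from the ``smooth'' arithmetic progression $s(c,m,d)$ of \examplref{polygons}; each vertex of $\Delta$ whose normal cone is singular contributes a local modification of $\beta$ whose shape depends only on the analytic type of the corresponding torus-fixed singular point of $S$.

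Third, I would re-run the calculation of \thmref{mainthm} for these modified sequences $\beta$, using \corref{templatesum} and \thmref{linear}. Under the $\delta$-very ampleness assumption the relevant degrees are large compared to $\delta$, so the template sum $\sum_k \Phi_\beta(\Gamma_{(k)})$ splits into a bulk region over which $\Phi_\beta(\Gamma_{(k)})$ coincides with the linear function of \thmref{linear} and produces the expected product $B_1^{K_{\widehat S}^2}B_2^{LK_{\widehat S}}(D\widetilde{DG}_2/\widetilde\Delta)^{\chi(\oo_{\widehat S})/2}$, plus boundary regions at the two ends. Each boundary region depends only on the local shape of $\beta$ at one vertex of $\Delta$, hence by the previous step only on the analytic type of the corresponding singularity. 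Taking $\exp$ and matching the product form in \eqref{Fcbar} isolates $\widehat F_c(y,q)$ as the formal exponential of the cumulated local boundary contributions at singularities of type $c$.

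The main obstacle will be the universality claim: proving that the local contribution at a vertex of $\Delta$ truly depends only on the analytic type of the singularity and not on the ambient polygon, the adjacent edge directions, or the line bundle $L$. For cyclic quotient singularities, \thmref{mainthm}(5) already shows that $Q^\delta_{s(0,m,d)}(y)$ is a $\Q[y^{\pm1}]$-linear combination of $1,m,d,md,md^2$, which is precisely the shape needed for a correction proportional to the local contributions of an $A_{m-1}$-type point to $\chi(\oo)$, $K^2$, $LK$. Generalizing to arbitrary toric singularities, especially non-rational cyclic quotients, will require organizing templates near a singular vertex by the Hirzebruch--Jung continued fraction of the singularity and proving a corresponding refinement of \thmref{linear}; this step, together with verifying universality against enough examples (as in \corref{refpol}) to fix the leading coefficients of each $\widehat F_c$, is where most of the work should lie.
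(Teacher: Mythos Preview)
The statement you are addressing is a \emph{conjecture}, and the paper does not prove it. What the paper does establish is exactly your first paragraph: the equivalence of \eqref{Fc} and \eqref{Fcbar}. Concretely, since $LK_S=LK_{\widehat S}$ and $K_{\widehat S}^2=K_S^2-\sum_c n_c e_c$ for a rational number $e_c$ depending only on the singularity type, the two formulations are interchangeable via $\widehat F_c=F_c\,B_1^{e_c}$. Beyond this equivalence, the paper offers only computational evidence in special cases (\propref{propsigman} for $\P(1,1,m)$ with small $\delta$, and the unrefined case of \cite{LO} for $A_n$ singularities). No proof of the full conjecture is claimed or given.

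Your proposal, by contrast, sketches an attempt at a full proof, and the sketch has a genuine gap precisely where you say ``most of the work should lie.'' The long edge graph machinery as developed here and in \cite{L},\cite{LO} applies only to $h$-transversal polygons, not to arbitrary toric surfaces, so the scope of your second step is already narrower than the conjecture. More importantly, the argument of \thmref{mainthm} shows that $Q^\delta_\beta(y)$ is a polynomial of the expected shape in the bulk parameters, but it does \emph{not} show that the correction at a singular vertex depends only on the analytic type of the singularity; indeed, even for the $\frac{1}{m}(1,1)$ singularities the paper resorts to finite Caporaso--Harris computations to pin down $\widehat F_{c_m}$ modulo low powers of $q$, rather than deducing universality from the template sums. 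Your invocation of \thmref{mainthm}(5) does not help here: that result gives polynomiality in $d$ and $m$ for $d,m\ge\delta$, but it does not separate the boundary contribution into a factor $\widehat F_{c_m}$ independent of $d$. So the universality step is not a technicality to be filled in; it is the entire open content of the conjecture, and nothing in your outline or in the paper's methods currently closes it.
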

We can see that the two formulas formulas \eqref{Fc}, \eqref{Fcbar} are equivalent.
Note that $LK_S=LK_{\widehat S}$. On the other hand it is easy to see that $K_{\widehat S}^2=K_S^2-\sum_{c}n_c e_c$ where $e_c$ is a rational number depending only on the singularity type $c$. 
Thus the two formulas are equivalent, via the identification $$\widehat F_c(y,q)=F_c(y,q)B_1(y,q)^{e_c}.$$
It turns out that the power series $\widehat F_c(y,q)$ are usually simpler, so we will restrict our attention to them.
Note that for a rational double point $c$ we have $e_c=0$ and thus $F_c=\widehat F_c$.

We give a slightly more precise version of the conjecture for a weighted projective space $\P(1,1,m)$ and its minimal resolution $\Sigma_m$, and prove some special cases of it. In this case  the exceptional divisor is the section $E$ with self intersection $-m$. The weighted projective space $\P(1,1,m)$  has one singularity of type  $\frac{1}{m}(1,1)$, i.e. the cyclic quotient of $\C^2$ by the $m$-th roots of unity $\mu_{m}$ acting by $\epsilon (x,y)=(\epsilon x,\epsilon y)$.
We write  $c_m$ for this singularity. 
It is elementary to see that 
\begin{align*}&K_{\Sigma_m}=-2H+(m-2)F=-\frac{m+2}{m}H-\frac{m-2}{m}E, \quad K_{\P(1,1,m)}=-\frac{m+2}{m}H, \\ 
&e_{c_m}=\frac{(m-2)^2}{m}, \quad
K_{\Sigma_m}^2=8, \quad dH K_{\Sigma_m} =d(m+2), \quad \chi(\Sigma_m,dH)=(md+2)(d+1)/2.
\end{align*}

\begin{conj}\label{ruledblow}
If $\delta\le 2d-1$, then 
\begin{equation}
\begin{split}\label{rulform}N^{(\Sigma_m,dH),\delta}(y)&=\Coeff_{q^{\frac{m}{2}d^2+(\frac{m}{2}+1)d}}\left[\frac{\widetilde{DG}_2(y,q)^{\frac{m}{2}d^2+(\frac{m}{2}+1)d-\delta}B_1(y,q)^{8}}{B_2(y,q)^{d(m+2)}}\Big(\frac{D\widetilde{DG}_2(y,q)}{\widetilde \Delta(y,q)}\Big)^{1/2} \widehat F_{c_m}(y,q)\right].
\end{split}\end{equation}
Furthermore 
we have for $m\ge 2$
\begin{align*}\widehat F_{c_m}&=1-mq+((m-2)y+(m^2/2+3m/2-5)+(m-2)y^{-1})q^2\\&-((m^2+5m-14)y+(m^3+9m^2+44m-132)/6+(m^2+5m-14)y^{-1})q^3+O(q^4),
\end{align*}
and 
\begin{align*}
\widehat F_{c_2}&=\sum_{n\in \Z} (-1)^n  q^{n^2}=1-2q+2q^4-2q^9+\ldots,\\ \widehat F_{c_3}&=1 - 3q + (y + 4 + y^{-1})q^2 - (10y + 18 + 10y^{-1})q^3 + ((6y^2 + 70y + 115 + 70y^{-1} + 6y^{-2})q^4 \\&
\qquad-
((y^3 + 94y^2 + 473y + 721y+ 473y^{-1} +94y^{-2} +y^{-3})q^5 + O(q^6) 
\\ \widehat F_{c_4}&=1 - 4q + (2y + 9 + 2y^{-1})q^2 - (22y + 42+ 22y^{-1})q^3\\& \qquad+ ((14y^2 + 164y + 273 + 164y^{-1} + 14y^{-2})q^4 +O(q^5).\\ 
\end{align*}
\end{conj}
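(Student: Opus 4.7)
The plan is to combine the polynomiality from \thmref{mainthm}(4) with direct computation via the Caporaso--Harris recursion of \thmref{Caporaso} to verify \eqref{rulform} to a prescribed order in $q$ and to extract the listed expansions of $\widehat F_{c_m}$.

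First I would observe that for fixed $m$ and $\delta$, both sides of \eqref{rulform}, as functions of $d$ in the range $d\ge \delta$, are polynomial of degree at most two in $d$. The left side is so by \thmref{mainthm}(4). For the right side, the only $d$-dependence enters through the exponents $\chi(dH)-1-\delta=\tfrac{m}{2}d^2+\tfrac{m+2}{2}d-\delta$ of $\widetilde{DG}_2$ and $-d(m+2)$ of $B_2$, so after taking logarithms and extracting the $t^\delta$-coefficient the dependence on $d$ is quadratic, by the same multiplicativity argument that gave \eqref{prodSLy}. Matching both sides therefore reduces, for each fixed $m$, to pinning down a single power series $\widehat F_{c_m}(y,q)\in 1+q\Q[y^{\pm 1}][[q]]$.

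Second, I would run the following computational scheme. For each fixed small $m$, use \thmref{Caporaso} to compute the refined Severi degrees $N^{(\P(1,1,m),dH),\delta}(y)$ for $\delta$ up to some bound $N$ and for at least three values of $d\ge \delta$; polynomial interpolation in $d$ then yields the node polynomial $N_\delta((\P(1,1,m),dH);y)$ explicitly. Matching this polynomial against the right-hand side of \eqref{rulform}, in which $B_1$, $B_2$, $\widetilde{DG}_2$, $\widetilde\Delta$ are the universal series of the smooth toric conjecture and known to sufficient order by \corref{refpol}, produces recursively in $\delta$ the coefficients of $q^k$ in $\widehat F_{c_m}(y,q)$ for $k\le N$. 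The expansions displayed in the conjecture are then read off directly, and consistency across different values of $\delta$ in the computed range is the verification of \eqref{rulform} to the corresponding order. To exhibit the stated threshold $\delta\le 2d-1$ one further checks that the interpolated node polynomial continues to agree with the Caporaso--Harris value of $N^{(\P(1,1,m),dH),\delta}(y)$ in the smaller range $(\delta+1)/2\le d<\delta$. The general formula modulo $q^4$ is then obtained by interpolating the resulting coefficients across several values of $m$.

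The main obstacle is that the scheme verifies the conjecture only up to the order in $q$ that the computations reach: the refined Caporaso--Harris recursion grows rapidly in complexity with $\delta$, so $\widehat F_{c_m}$ can only be pinned down modulo a finite power of $q$. A conceptually harder obstacle is recognising and proving in closed form the striking identity $\widehat F_{c_2}(y,q)=\sum_{n\in \Z}(-1)^n q^{n^2}$, which is independent of $y$ and matches the $A_1$-correction found for the unrefined case in \cite{LO}. Proving this identity (and obtaining modular-type closed expressions for $\widehat F_{c_m}$ with $m\ge 3$) would require a combinatorial interpretation via long edge graphs that localises the contribution of the singular point; this is not supplied by the template arguments of Section~3 and appears to be the principal obstacle to upgrading the present verification into a genuine proof.
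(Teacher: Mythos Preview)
This statement is a \emph{conjecture} in the paper; the paper does not prove it. What the paper proves is the weaker \propref{propsigman}, establishing \eqref{rulform} only for $\delta\le\min(\delta_m,d)$ with $\delta_2=8$, $\delta_3=5$, $\delta_4=4$, and $\delta_m=3$ for $m\ge 5$, and determining $\widehat F_{c_m}$ only modulo the corresponding power of $q$. Your proposal is essentially that same strategy---Caporaso--Harris computation for small $m,d$, polynomial interpolation in $d$, then interpolation in $m$---and you correctly diagnose that it does not upgrade to a full proof.

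One inaccuracy to fix: you assert that both sides of \eqref{rulform} are polynomials of degree at most two in $d$, citing \thmref{mainthm}(4). That theorem is about $Q^{(\P(1,1,m),dH),\delta}$, the \emph{logarithmic} coefficient, not $N^{(\Sigma_m,dH),\delta}$; the refined node polynomial itself has degree $2\delta$ in $d$. The argument (yours and the paper's) works at the level of $Q$: one checks that the $Q_\delta$ computed from each side agree for three values of $d\ge\delta$, hence agree as quadratics, and then the $N_\delta$ agree since they are determined by the $Q$'s via exponentiation. For the passage to general $m$ the paper invokes \thmref{mainthm}(5), which says $Q^{(\P(1,1,m),dH),\delta}$ is for $d,m\ge\delta$ a $\Q[y^{\pm1}]$-linear combination of $1,m,d,dm,d^2m$; knowing it as a polynomial in $d$ for two values of $m$ then determines it for all $m\ge\delta$. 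Your ``interpolating across several values of $m$'' needs precisely this input to be justified.
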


\begin{prop} \label{propsigman} Let $\delta_2=8$, $\delta_3=5$, $\delta_4=4$, $\delta_m=3$ for $m\ge 5$.
Then \eqref{rulform}  is correct for $m\ge 2$ and $\delta\le \min(\delta_m,d)$.
\end{prop}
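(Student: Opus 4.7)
The plan is to follow the strategy of \corref{refpol}: take logarithms of both sides of \eqref{rulform}, use the polynomial structure of $Q^\delta$ from \thmref{mainthm} to reduce to a finite check, and carry out this check via the Caporaso--Harris recursion of \thmref{Caporaso}. By \thmref{Refinedfloor} and \thmref{severisequence}(1), $N^{(\Sigma_m,dH),\delta}(y)=N^{(\P(1,1,m),dH),\delta}(y)$, so throughout we work on $\P(1,1,m)$.

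For each fixed $m$, \thmref{mainthm}(4) gives that $Q^{(\P(1,1,m),dH),\delta}(y)$ is a polynomial of degree at most $2$ in $d$ for $d\ge\delta$. Via \remref{reform}, the right-hand side of \eqref{rulform} equals the $t^\delta$-coefficient of the generating function
\[
\left(\frac{t}{g(t)}\right)^{\chi(dH)}B_1(y,g(t))^{8}B_2(y,g(t))^{d(m+2)}\left(\frac{g(t)g'(t)}{\widetilde\Delta(y,g(t))}\right)^{1/2}\widehat F_{c_m}(y,g(t)),
\]
whose logarithm has $t^\delta$-coefficient polynomial of degree at most $2$ in $d$ for fixed $m$ (since $\chi(dH)=(md^2+md+2d+2)/2$ is quadratic in $d$ and $d(m+2)$ is linear). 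Thus, for each pair $(m,\delta)$, \eqref{rulform} reduces to an equality of polynomials of degree at most $2$ in $d$, verified by checking three values $d\ge\delta$. We apply this to $m\in\{2,3,4\}$ and all $\delta\le\delta_m$, computing the left-hand side via the Caporaso--Harris recursion and the right-hand side by direct coefficient extraction from the explicit expansions of $B_1,B_2,\widetilde{DG}_2,\widetilde\Delta,\widehat F_{c_m}$ given above.

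For the uniform range $m\ge 5$, $\delta\le 3$ we use \thmref{mainthm}(5), which says that for $d,m\ge\delta$, $Q^{(\P(1,1,m),dH),\delta}(y)$ is a $\Q[y^{\pm 1}]$-linear combination of $1,m,d,dm,d^2m$. A direct computation with the stated expansion of $\widehat F_{c_m}$ modulo $q^4$ shows that $\log\widehat F_{c_m}(y,q)$ is linear in $m$ to the same order: the higher powers of $m$ appearing in the $q^2$- and $q^3$-coefficients of $\widehat F_{c_m}$ cancel exactly against the $(-m)^k/k$ contributions from the logarithm expansion. Combined with the observations that $\chi(dH)$ and $d(m+2)$ lie in the span of $1,d,md,md^2$, this implies that the log of the right-hand side also has its $t^\delta$-coefficient in $\langle 1,m,d,dm,d^2m\rangle$ for $\delta\le 3$. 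Verification at any five pairs $(m,d)$ with $m,d\ge\delta$---e.g.\ $m\in\{5,6,7,8,9\}$, $d=\delta$---then completes the uniform case.

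The main obstacle is precisely this cancellation in $\log\widehat F_{c_m}$. The correction factors $\widehat F_{c_m}$ are a priori independent power series indexed by $m$, and the polynomial structure supplied by \thmref{mainthm}(5) imposes the nontrivial restriction that their higher $m$-terms combine correctly under the logarithm; this has to be checked directly from the given expansions. Once confirmed, the remainder is a routine bookkeeping computation at modest orders of the Caporaso--Harris recursion.
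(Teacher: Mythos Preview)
Your strategy is the same as the paper's: reduce via \thmref{mainthm} to a finite verification using Caporaso--Harris, first for $m=2,3,4$ individually, then uniformly in $m$ via \thmref{mainthm}(5). Your treatment of $m\in\{2,3,4\}$ is fine, and you are in fact more explicit than the paper in checking that the right-hand side of \eqref{rulform} also has logarithm in the span $\langle 1,m,d,dm,d^2m\rangle$ (via the cancellation in $\log\widehat F_{c_m}$), which the paper's proof uses without comment.

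There is, however, a genuine gap in your verification step for $m\ge 5$. You propose checking five pairs $(m,d)$ with $d=\delta$ fixed, namely $m\in\{5,6,7,8,9\}$. But restricting a function $a+bm+cd+edm+fd^2m$ to a single value $d=\delta$ collapses it to $(a+c\delta)+(b+e\delta+f\delta^2)m$, which lies in the two-dimensional span $\langle 1,m\rangle$. Thus your five evaluations impose only two independent conditions, not five, and do not determine an element of the five-dimensional space $\langle 1,m,d,dm,d^2m\rangle$. Equality at those points does not force equality of the two sides.

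The paper's fix is also the most economical one: use the computations already done at $m=3,4$. For $\delta\le 3$ one has $3,4\ge\delta$, so \thmref{mainthm}(5) applies; knowing the full degree-$2$ polynomial in $d$ at two values of $m$ gives $2\times 3=6$ independent conditions, which overdetermine the five-dimensional space and hence pin down both sides. This avoids any new Caporaso--Harris computations for $m\ge 5$. If you prefer to keep your framing, any choice of five points $(m_i,d_i)$ with $m_i,d_i\ge\delta$ at which the evaluation functionals on $\langle 1,m,d,dm,d^2m\rangle$ are linearly independent (for instance two values of $m$ paired with three values of $d$) would repair the argument.
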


\begin{proof}
Using the Caporaso Harris recursion we computed $N^{(\Sigma_m,dH),\delta}$ for $2\le m\le 4$, $\delta\le \delta_m$ and 
$d\le d_m$ with $d_2=10$, $d_3= 7$, $d_4= 6$. We find that in this range \eqref{rulform} holds for $\delta\le \min(2d-1,\delta_m).$
By part (3) of \thmref{mainthm} we have  that  $Q^{(\Sigma_m,dH,\delta)}$ is a polynomial of degree $2$ in $d$ for $d\ge \delta$. 
By the computation we know this polynomial in the following cases:  
$(m=2,\ \delta\le 8)$, $(m=3,\ \delta\le 5)$, $(m=4,\ \delta\le 4)$. This shows the result for $m=2,3,4$.
Finally by part (5) of \thmref{mainthm}  we have that  $Q^{(\Sigma_m,dH,\delta)}(y)$ is for 
$d,m\ge \delta$ a polynomial in $d$ and $m$ of degree $2$ in $d$ and $1$ in $m$. By the above we know this polynomial as a polynomial in $d$ for $\delta=0,1,2,3$ and $m=3,4$. This determines it and thus also $Q^{(\Sigma_m,dH,\delta)}(y)$ and therefore also $N^{(\Sigma_m,dH,\delta)}(y)$, for  $\delta=0,1,2,3$ and $d,m\ge \delta$. 
The result follows.\end{proof}

The non-refined Severi degrees for toric surfaces with only rational double points given by $h$ transversal lattice polygons  have been studied in \cite{LO}.
The only rational double points which can occur in this case are $A_n$ singularities. For such surfaces they  prove the analogue of \conjref{singsurfconj} for  $y=1$ with precise bounds.
Furthermore they show
$$F_{a_n}(1,q)=\frac{\eta(q)^{n+1}}{\eta(q^{n+1})}=\prod_{k>0} \frac{(1-q^k)^{n+1}}{1-q^{(n+1)k}},$$
where we denote $F_{a_n}(y,q)$ the power series $F_c(y,q)$ for $c$ an $A_n$ singularity.
We conjecture that the same result holds also for the refined Severi degrees with the $F_{a_n}(y,q)$ independent of $y$.

\begin{conj} \label{conjan}
Let $S$ be projective normal  toric surface with only rational double points, more precisely with $n_k$ singularities of type $A_k$ for all $k$ (with $n_k$ only nonzero for finitely many $k$).
If $L$ is $\delta$-very ample on $S$, then
$$N^{(S,L),\delta}(y)=\Coeff_{q^{L(L-K_S)/2}}\left[\widetilde{DG}_2(y,q)^{\chi(L)-1-\delta}\frac{B_1(y,q)^{K_{\widehat S}^2}}{B_2(y,q)^{-LK_{\widehat S}}}\left(\frac{D\widetilde{DG}_2(y,q)}{\widetilde \Delta(y,q)}\right)^{1/2} \prod_{k}\left(\frac{\eta(q)^{k+1}}{\eta(q^{k+1})}\right)^{n_k}.\right].$$
\end{conj}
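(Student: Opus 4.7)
The plan is to mimic, at the refined level, the approach of Liu--Osserman \cite{LO} which established the non-refined version of the conjecture. The first step is to extend the long-edge graph / floor diagram calculus of Section~2 so that it computes $N^{(S,L),\delta}(y)$ when $S$ is an $h$-transversal toric surface with $A_n$ singularities. An $A_n$ singularity of such a surface corresponds to a distinguished pair of consecutive boundary edges of the associated lattice polygon $\Delta$ whose primitive outer normals span a sublattice of index $n+1$; under the dictionary translating floor diagrams to long-edge graphs, this amounts to a specific local prescription on the boundary sequence $\beta$. I would verify that \thmref{Refinedfloor} and the remark following it carry over, so that $N^{(S,L),\delta}(y)=\sum_{G} M(G)P_\beta^s(G)$ holds for $\beta$ encoding the modified boundary data at each singularity.

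Once this refined counting is in place, the template decomposition of \corref{templatesum}, combined with the case analysis used in the proof of \thmref{mainthm}, should yield a decomposition of $Q^\delta_\beta(y)$ into a bulk contribution --- linear in $L^2$, $LK_{\widehat S}$, $K_{\widehat S}^2$, $\chi(\oo_S)$, and hence producing the universal factors $\widetilde{DG}_2$, $B_1$, $B_2$, $\widetilde\Delta$, $D\widetilde{DG}_2$ --- plus local contributions attached to each individual $A_n$ singularity. Exponentiating and passing to the appropriate Fourier coefficient via \remref{reform} would then express $N^{(S,L),\delta}(y)$ in the desired multiplicative form, producing universal power series $F_{a_n}(y,q)\in \Q[y^{\pm 1}][[q]]$ that depend only on the singularity type. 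Universality would follow from the observation that the template contributions near a singular corner depend only on the local boundary combinatorics, not on the global geometry of $\Delta$.

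The main obstacle, and the most striking part of the conjecture, is showing that each $F_{a_n}(y,q)$ is actually independent of $y$. My preferred strategy is structural: determine which long-edge graphs genuinely contribute to the local correction at an $A_n$ corner, and argue that the index $n+1$ sublattice constraint forces the weights of the edges incident to that corner to equal $1$ in each non-vanishing configuration. Since $[1]_y=1$, the refined multiplicity factors attached to these edges collapse to their Severi counterparts, so the local correction series coincides with its specialization at $y=1$, which is precisely $\eta(q)^{n+1}/\eta(q^{n+1})$ by \cite{LO}. Making this weight-one reduction precise --- probably by a direct case analysis paralleling \cite[Lem.~4.2]{LO} --- is where the real work lies. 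Failing a clean structural argument, one can at least reduce the conjecture to a finite check for each fixed $n$ and $\delta$: the polynomial rigidity supplied by \thmref{mainthm} pins down $F_{a_n}(y,q)$ modulo $q^N$ in terms of finitely many refined Severi degrees on an explicit family of singular toric surfaces, which can be computed from a Caporaso--Harris recursion (\thmref{Caporaso}) adapted to the singular case, in the spirit of \propref{propsigman}.
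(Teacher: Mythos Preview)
The statement you are trying to prove is explicitly a \emph{conjecture} in the paper, not a theorem: the paper does not claim to prove it. What the paper offers is only supporting evidence, collected in the remark immediately following \conjref{conjan}: the case $\P(1,1,2)$ (an $A_1$ singularity) reduces to \conjref{ruledblow}, for which \propref{propsigman} gives a verification for small $\delta$; and a Caporaso--Harris computation for $\P(1,2,3)$ (one $A_1$ and one $A_2$ singularity) confirms the conjecture in the range tested. So there is no proof in the paper to compare against --- your proposal is an attempt to settle an open problem the authors leave unresolved.

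As for the proposal itself: the first two paragraphs are a reasonable sketch of how one would try to extend the Liu--Osserman machinery to the refined setting, and indeed the paper already hints (in the remark after \thmref{severisequence}) that the refined version of \cite[Thm.~2.12]{LO} should go through for $h$-transversal polygons. The genuine gap is exactly where you locate it: the $y$-independence of $F_{a_n}(y,q)$. Your ``weight-one reduction'' heuristic is not obviously correct. The local correction at an $A_n$ corner in \cite{LO} arises from templates whose edges near the corner can have arbitrary weights; the constraint coming from the index-$(n+1)$ sublattice governs which $\beta$-sequences are allowable, not directly the edge weights. There is no evident mechanism forcing $w(e)=1$ for the edges that contribute, and the authors' decision to leave this as a conjecture --- despite having the full refined template calculus at hand --- suggests that no such simple structural argument was apparent to them. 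Your fallback (reduce to a finite check for each fixed $n,\delta$) is sound in principle but does not yield a proof of the conjecture as stated, only further evidence of the same type the paper already provides.
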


%\begin{LG} I think I should collect the few things I want to use on modular forms in the beginning. Not scattered through the text like this.\end{LG}
%\begin{LG} As I refer many times to the Caporaso-Harris recursion, I should cite it from \cite{BG} and also state the formula.\end{LG}

\begin{rem}\begin{enumerate}
\item $\P(1,1,2)$ has an $A_1$ singularity, and as we saw $\Sigma_2$ is a resolution of $\P(1,1,2)$. 
It is standard that $\theta_{2}(2\tau)=\frac{\eta(\tau)^2}{\eta(2\tau)}$. Thus for $\P(1,1,2)$  \conjref{conjan} is a special case of \conjref{ruledblow}
and \propref{propsigman} gives evidence for it.
\item We also used a version of the Caporaso Harris recursion for $\P(1,2,3)$. With the line bundle $dH$ with $d$ small for $H$ the hyperplane bundle. $\P(1,2,3)$ has one $A_1$ and one $A_2$ 
singularity, also in this case \conjref{conjan} is confirmed in the realm considered.
\item
Note that the conjecture that the $F_{a_n}(y,q)$ are independent of $y$ says in particular that the correction factor for the $A_n$ singularities is the same
for Severi degrees and tropical Welschinger invariants.
\end{enumerate}
\end{rem}

We want to  generalise this conjecture in another direction. 
Let $S$ be a singular toric surface with singular points $p_1,\ldots,p_r$ and a minimal toric resolution $\widehat S$ with exceptional divisors $E_1,\ldots,E_r$.
Let $L$ be a toric line bundle on $S$. We
have seen that 
$N^{(\widehat S,L),\delta}(y)=N^{( S,L),\delta}(y)$ is a refined count of $\delta$-nodal curves on $S$, which are not required to pass through the singular locus of $S$.
In a similar way we can interpret $N^{(\widehat S,L-k_1E_1-\ldots -k_r E_r),\delta}(y)$ as a refined count of curves in $|L|$ on $S$ which pass through the singular points $p_i$ with multiplicity
$-k_i E_i^2$. This even makes sense if $L$ is only a class of Weil divisors on $S$, the $k_i$ are not necessarily integral but $L-k_1E_1-\ldots -k_r E_r$ is a Cartier divisor on 
$\widehat S$. In this case the curves we count on $S$ are Weil divisors.

Here we will consider this question only in the case that $S$ has only $A_1$ singularities.
Denote $\eta(q)=q^{1/24}\prod_{n>0} (1-q^n)$ the Dirichlet eta function. Let $\theta_{2}(q):=\sum_{n\in \Z} (-1)^nq^{n^2/2}$ be one of the standard theta functions. 
Recall the Jacobi triple product formula
$$\eta(q^2)^3=q^{1/4} \sum_{n\ge 0} (-1)^n (2n+1)q^{n(n+1)}.$$

We define functions $f_l(q)$,  for $l\in \Z_{\ge 0}$ by
\begin{equation} \label{fkk2}
\begin{split}f_{2k}(q)&=
%&=\theta_{01}(2\tau), \ f_1(\tau)=q^{-1/4}\eta(2\tau)^3=\sum_{n\ge 0} (-1)^n (2n+1)q^{2n(n+1)}.$$
\frac{(-1)^{k}}{(2k)!}\sum_{n\in \Z} (-1)^n\left(\prod_{i=0}^{k-1} (n^2-i^2) \right) q^{n^2}=
\frac{(-1)^{k}}{(2k)!}\left(\prod_{i=0}^{k-1} (D-i^2) \right)\theta_{2}(q^2)\\
f_{2k+1}(q)&=\frac{(-1)^{k}}{(2k+1)!}\sum_{n\in \Z} (-1)^n(n+1/2)\left(\prod_{i=0}^{k-1} ((n+1/2)^2-(i+1/2)^2) \right) q^{(n+1/2)^2}\\
&=\frac{(-1)^{k}}{(2k+1)!}
\left(\prod_{i=0}^{k-1} (D-(i+1/2)^2) \right)\eta(q^2)^3.
\end{split}
\end{equation}
Here as before we denote $D=q\frac{d}{dq}$.
In particular we have
$$f_0(q)=\sum_{n\in \Z} (-1)^n q^{n^2},\quad f_{1}(q)=\sum_{n\ge 0} (-1)^n (2n+1)q^{(n+1/2)^2},\quad 
f_2(q)=\sum_{n>0} (-1)^{n-1} n^2	q^{n^2}.$$

We write 
$N^{(S,L),\delta}_{[k_1,\ldots,k_nr]}:=N^{(\widehat S,L-k_1E_1-\ldots -k_r E_r),\delta}(y)$, to stress that we view it as a count of curves on $S$ with 
prescribed multiplicities at the $A_1$-singularities.

\begin{conj}\label{A1con}
Let $S$ be a toric surface with only $A_1$ singularities $p_1,\ldots, p_r$. Fix $k_1,\ldots,k_r\in \frac{1}{2}\Z_{\ge 0}$. Let $\delta\ge 0$.
Let $L$ be a Weil divisor on $S$, such that $L-\sum_i k_iE_i$ is a Cartier divisor on $\widehat S$, which is $\delta$-very ample on any irreducible curve in $\widehat S$ not contained in 
$E_1\cup \ldots\cup E_r$..
Then 
\begin{equation}\label{A1mult}N^{(S,L),\delta}_{[k_1,\ldots,k_r]}(y)=\Coeff_{q^{L(L-K_S)/2}}\left[\frac{\widetilde{DG}_2(y,q)^{\chi(L)-\sum_i k_i^2-1-\delta}B_1(y,q)^{K_{S}^2}}{B_2(y,q)^{LK_{S}}}\left(\frac
{D\widetilde{DG}_2(y,q)}{\widetilde\Delta(y,q)}\right)^{1/2}\prod_{i=1}^r f_{2k_i}(q)\right].
\end{equation}
\end{conj}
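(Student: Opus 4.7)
My plan is to reduce \conjref{A1con} to a refined relative count on the minimal resolution $\pi\colon \widehat S \to S$. Since $L - \sum_i k_i E_i$ is Cartier on $\widehat S$ and $(L - \sum_j k_j E_j)\cdot E_i = 2k_i$, a $\delta$-nodal Weil divisor on $S$ through $p_i$ with multiplicity $2k_i$ pulls back to a $\delta$-nodal curve on $\widehat S$ in $|L - \sum_j k_j E_j|$ meeting each exceptional $(-2)$-curve $E_i$ in a zero-dimensional scheme of length $2k_i$. Thus the left-hand side of \eqref{A1mult} should be computed by the refined relative Severi degrees of \thmref{Caporaso}, with tangency profiles $(\alpha_i, \beta_i)$ along each $E_i$ constrained by $I\alpha_i + I\beta_i = 2k_i$. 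As a consistency check, the case $k_1 = \cdots = k_r = 0$ must recover \conjref{conjan} specialised to $A_1$ singularities: indeed, $f_0(q) = \theta_2(q^2) = \eta(q)^2/\eta(q^2)$ matches the predicted correction $\eta(q)^{n+1}/\eta(q^{n+1})$ at $n=1$.

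Next I would iteratively apply the Caporaso-Harris recursion of \thmref{Caporaso}, peeling each exceptional divisor $E_i$ off the class and summing over admissible tangency profiles with the usual weights $\binom{\alpha}{\alpha'}\binom{\beta'}{\beta}\prod_j [j]_y^{\beta'_j - \beta_j}$. After all the $E_i$ have been stripped, the remaining class no longer meets the exceptional locus and its generating function is governed by the smooth toric formula of \remref{reform}(3). What must be shown is that the net local contribution at each $A_1$ point is precisely $f_{2k_i}(q)$; multiplicativity, in the spirit of \thmref{mainthm} applied to suitable test surfaces, guarantees that this local factor depends only on the singularity type and on $k_i$, not on the ambient surface or line bundle.

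The main obstacle is identifying this universal local factor explicitly with the theta expressions in \eqref{fkk2} and, in particular, showing that it is independent of $y$. I would proceed by induction on $2k_i$, exploiting the defining presentation of $f_{2k}$ and $f_{2k+1}$ as products of first-order differential operators $\prod_j(D - j^2)$ and $\prod_j(D - (j+1/2)^2)$ applied to $\theta_2(q^2)$ or $\eta(q^2)^3$. The step $k_i \to k_i + 1$ should match exactly one application of $D - k_i^2$ produced by peeling one more copy of $E_i$; the long-edge-graph formalism of Section~2.3, augmented with an internal $(-2)$-vertex carrying prescribed contact data, looks well-suited to making this match precise. The hardest part will be verifying that the combinatorial weights on the long-edge-graph side produce exactly the eigenvalue $k_i^2$ (and cancel all $y$-dependence), for which I would anchor the induction with extensive computer verification using the Caporaso-Harris recursion on $\P(1,1,2)$, $\P(1,2,3)$ and on smooth toric surfaces degenerated so as to acquire several $A_1$ singularities.
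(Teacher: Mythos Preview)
The statement you are trying to prove is labelled \emph{Conjecture} in the paper, and the paper does not prove it. What the paper actually establishes is only the specialisation to $\P(1,1,2)\leftarrow\Sigma_2$ with a single $A_1$ point (\conjref{blowk}), and even that only in the restricted range $k\le 5$, $\delta\le 4$ (the Proposition immediately following \conjref{blowk}). The argument there is not structural at all: it is a finite computer check of $N^{(\Sigma_2,dH+cF),\delta}(y)$ via the Caporaso--Harris recursion for small $d,c,\delta$, combined with \thmref{mainthm}(1),(3) to extend polynomially in $d$ and $c$. The functions $f_{2k}(q)$ in \eqref{fkk2} are \emph{guessed} from this data, not derived; the paper explicitly flags the $y$-independence of the correction factor as ``remarkable'' rather than explained.

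Your proposal therefore goes well beyond what the paper claims, and it has real gaps. First, \thmref{Caporaso} as stated is a recursion for $\P^2$, $\P(1,1,m)$, $\Sigma_m$ relative to a single smooth torus-invariant divisor $H$; it does not give you a recursion relative to several disjoint $(-2)$-curves $E_i$ on an arbitrary toric resolution $\widehat S$, so the step ``iteratively peel off each $E_i$'' is not available from the cited result. Second, the heart of your plan is the assertion that one step of such a recursion matches the operator $D-k_i^2$ acting on $\theta_2(q^2)$ or $\eta(q^2)^3$, with all $y$-dependence cancelling; this is exactly the phenomenon the paper singles out as unexplained, and nothing in your outline indicates why the $[j]_y$ weights in the recursion should conspire to a $y$-free answer. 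Anchoring the induction with computer checks on $\P(1,1,2)$ and $\P(1,2,3)$ is precisely the level of evidence the paper already offers --- it is not a proof. If you want to pursue this direction, the honest first step is to formulate and prove a refined Caporaso--Harris recursion relative to a $(-2)$-curve and then attempt the operator identification for small $k$ by hand; absent that, what you have written is a plausible heuristic, not a proof.
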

Thus we claim that the correction factors for points of multiplicity $k$ at $A_1$ singularities of $S$ are given by the quasimodular forms $f_k(q)$.

Equivalently we can look at the same question on the blowup $\widehat S$. 
Write $\widehat L:=L-k_1E_1-\ldots-k_rE_r$ and 
$$\overline f_k(q)=\frac{f_k(q)}{ q^{k^2/4}},\quad k\in \frac{1}{2}\Z_{\ge 0},$$ 
then (with the same assumptions)  \eqref{A1mult} is clearly equivalent to

\begin{equation}\label{A1bl}N^{(\widehat S,\widehat L),\delta}(y)=\Coeff_{q^{\widehat L(\widehat L-K_{\widehat S})/2}}\left[\frac{\widetilde{DG}_2(y,q)^{\chi(\widehat L)-1-\delta}B_1(y,q)^{K_{\widehat S}^2}}{B_2(y,q)^{\widehat LK_{\widehat S}}}\left(\frac
{D\widetilde{DG}_2(y,q)}{\widetilde\Delta(y,q)}\right)^{1/2}\prod_{i=1}^r \overline f_{2k_i}(q)\right].
\end{equation}
In other words, the correction factors for $\widehat L$ not being sufficiently ample on $\widehat S$ are the $\overline f_l(q)$.

\begin{rem}\label{fkkk} Under the assumptions of the conjecture,  if the $k_i$ are sufficiently large with respect to $\delta$, then 
$\widehat L$ will be $\delta$-very ample on  $\widehat S$. This  means by \conjref{genfun} that for large $l$  the correction factor $\overline f_{l}(q)$ should be $1$ modulo some high power of $q$. In fact we find the following.

For  $l\in \Z_{>0} $ we can rewrite
$$\overline f_l(q)=\sum_{m\ge 0} (-1)^m \frac{2m+l}{m+l}\binom{m+l}{l}q^{m(m+l)}.$$
In particular $\overline f_l(q)\equiv 1\mod q^{l+1}.$
%\begin{align*}f_{k}(\tau)&=\sum_{n\ge k} (-1)^{n-k}\frac{n}{k}\binom{n+k-1}{2k-1}q^{2},\\
%f_{k+1/2}(\tau)&=\sum_{n\ge k} (-1)^{n-k}\frac{2n-1}{2k+1}\binom{n+k}{2k}q^{(n+1/2)^2-(k+1/2)^2}.\end{align*}
\end{rem}
\begin{proof}
First we deal with the case $l$ even. Note that 
$$\prod_{i=0}^{k-1}(n^2-i^2)=n \prod_{i=-k-1}^{k-1}(n-i).$$ Thus we get for $k>0$
\begin{align*}\overline f_{2k}(q)=\frac{(-1)^{k}}{(2k)!}\sum_{n\in \Z} (-1)^n\prod_{i=0}^{k-1} (n^2-i^2) q^{n^2-k^2}=
\sum_{n\ge k} (-1)^{n-k}\frac{2n}{2k}\binom{n+k-1}{2k-1}q^{n^2-k^2},\end{align*}
where we also have used that  $\binom{n+k-1}{2k-1}=0$ for $n<k$. Finally put $m=n-k$, so that 
$\frac{2n}{2k}\binom{n+k-1}{2k-1}= \frac{2m+2k}{m+2k}\binom{m+2k}{2k}$ and $n^2-k^2=m(m+2k)$.

The case $l$ odd is similar. Note that 
$$\prod_{i=0}^{k-1}((n+1/2)^2-(i+1/2)^2)= \prod_{i=-k+1}^{k}(n-i).$$
Thus we get
\begin{align*}\overline f_{2k+1}(q)&=\frac{(-1)^{k}}{(2k+1)!}\sum_{n\ge 0} (-1)^n(2n+1)\left(\prod_{i=0}^{k-1} ((n+1/2)^2-(i+1/2)^2) \right) q^{(n+1/2)^2-(k+1/2)^2}
\\&=
\sum_{n\in\Z} (-1)^{n-k}\frac{2n+1}{2k+2}\binom{n+k}{2k}q^{(n+1/2)^2-(k+1/2)^2},\end{align*}
and put again $m:=n-k$.
\end{proof}
\begin{rem}
It is again remarkable that the correction factors $f_{k}(q)$ are independent of the variable $y$. In particular this means again that the correction factor is the same for the Severi degrees and for the tropical Welschinger number.
\end{rem}

We specialise the conjecture to  case that $S$ is the weighted projective space $\P(1,1,2)$ with the resolution $\Sigma_2$ with more precise bounds for the validity.
Note that 
$$\chi(\Sigma_2,dH-kE)=(d+1)^2-k^2,\quad (dH-kE)K_{\Sigma_2}=(dH-kE)(-2H)=-4d,\quad K_{\Sigma_2}^2=8.$$

\begin{conj}\label{blowk}
 Let $d,k\in \frac{1}{2}\Z$ with $d-k\in \Z$. Then for $\delta\le 2(d-k)+1$, we have
\begin{equation}\label{blowkk}
N^{(\Sigma_2,dH-kE),\delta}(y)=\Coeff_{q^{d^2+2d-k^2}}\left[\frac{\widetilde{DG}_2(y,q)^{d^2+2d-k^2-\delta}B_1(y,q)^{8}}{B_2(y,q)^{4d}}\left(\frac
{D\widetilde{DG}_2(y,q)}{\widetilde\Delta(y,q)}\right)^{1/2}\overline f_{2k}(q)\right].
\end{equation}
\end{conj}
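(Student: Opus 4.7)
The plan is to separate the proof into the integer and half-integer cases for $k$, with the former accessible from the machinery already developed in the paper and the latter requiring a conceptual extension.

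For integer $k$ with $d \geq k \geq 0$, the first step is to rewrite $L = dH - kE$ on $\Sigma_2$ as $L = 2kF + (d-k)H$, which places this within the $cF + d'H$ framework of Section 2 with $c = 2k$ and $d' = d-k$. This allows direct use of the long edge graph formalism and the Caporaso--Harris recursion of \thmref{Caporaso}. The intersection numbers are $L^2 = 2d^2 - 2k^2$ (quadratic in $d$), $LK_{\Sigma_2} = -4d$ (linear), while $K_{\Sigma_2}^2 = 8$ and $\chi(\oo_{\Sigma_2}) = 1$ are constants. Consequently, both sides of \eqref{blowkk}, passed through $\log$, are polynomials of degree $2$ in $d$ for fixed $k$ and $\delta$: the left-hand side by \thmref{mainthm}(3) (applied with $m=2$, $c = 2k$, valid whenever $d - k \geq \delta$), and the right-hand side by direct inspection of the exponents involving $L$, together with the fact that the correction factor $\overline f_{2k}(q)$ is a fixed Laurent series once $k$ is fixed.

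The second step is to pin down these two degree-$2$ polynomials in $d$ by matching three values. Using the Caporaso--Harris recursion one computes $N^{(\Sigma_2, 2kF + (d-k)H),\delta}(y)$ for three values of $d$ (for each fixed $k, \delta$ in the relevant range), then expands the right-hand side of \eqref{blowkk} via the explicit expansions of $\widetilde{DG}_2, B_1, B_2, \widetilde\Delta$ and of $\overline f_{2k}(q)$ provided by \remref{fkkk}, and checks equality. For $k=0$ this is \propref{propsigman} and for $2k \geq \delta$ the correction factor is trivial at the extraction order (since $\overline f_{2k}(q) \equiv 1 \bmod q^{2k+1}$ and $\widetilde{DG}_2^{d^2+2d-k^2-\delta}$ only sees the leading part of $\overline f_{2k}$), so the identity reduces to Conjecture \ref{genfun} for the smooth $\Sigma_2$. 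In the intermediate regime $1 \leq k < \delta/2$, the correction factor contributes nontrivially and must be verified by direct numerical check, but this involves only finitely many triples $(d,k,\delta)$ for each fixed $\delta$.

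The third step deals with the small-$d$ boundary: the conjecture's hypothesis $\delta \leq 2(d-k)+1$ is weaker than $d - k \geq \delta$, so for the finitely many $d$ with $(\delta-1)/2 \leq d - k < \delta$ one cannot invoke polynomial extension and must check the identity by direct computation of both sides. Again the Caporaso--Harris recursion handles the left-hand side. Combined with the interpolation step, this settles the integer-$k$ case.

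The main obstacle is the half-integer case $k \in \frac12 + \Z$, where $dH - kE$ is not a toric line bundle on $\Sigma_2$ but rather a half-point-class corresponding to a Weil divisor on $\P(1,1,2)$ passing through the $A_1$ singularity with fractional multiplicity. Neither the floor diagram description of \thmref{Refinedfloor} nor \thmref{Caporaso} as stated applies. The natural plan is to develop a Caporaso--Harris-type recursion tracking contact conditions of curves with the exceptional divisor $E$ in addition to the divisor $H$, so that half-integer multiplicities at $E$ correspond to odd-weight tangency sequences; alternatively, one could attempt a tropical definition of $N^{(\Sigma_2, dH-kE), \delta}(y)$ via floor diagrams on a polygon with a slanted edge of odd lattice length. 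The form of $\overline f_{2k+1}(q)$, built from $\eta(q^2)^3$ and its $D$-derivatives, strongly suggests that the correct combinatorial object encodes half-integer tangency data in a symmetric way, and identifying that object is the key conceptual step whose resolution would complete the proof.
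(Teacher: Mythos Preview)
This statement is a \emph{conjecture} in the paper; the paper does not prove it, and the Proposition that follows establishes only the cases $\delta \leq 4$. Your proposal should be compared against that partial result rather than against a full proof.

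Your strategy for integer $k$ --- rewriting $dH - kE = (d-k)H + 2kF$, invoking polynomiality in $d$ from \thmref{mainthm}(3), and matching values via the Caporaso--Harris recursion --- is precisely the paper's method for that Proposition. But two of your steps toward a \emph{full} proof are genuine gaps. Your reduction of the regime $2k \geq \delta$ to ``Conjecture~\ref{genfun} for the smooth $\Sigma_2$'' invokes an open conjecture: only \eqref{sigmangen} for $\delta \leq 8$ is established in \corref{refpol}. And ``matching three values'' together with ``direct numerical check'' in the intermediate and boundary regimes are computational verifications, not arguments valid for all $\delta$; carrying them out is exactly what the paper does to reach $\delta \leq 4$, and no further.

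Your treatment of the half-integer case contains an error. If $d, k \in \tfrac12\Z$ with $d - k \in \Z$, then $dH - kE = (d-k)H + (2k)F$ has integer coefficients in the basis $H, F$ (with $2k$ an odd integer), so it is an honest toric line bundle on $\Sigma_2$; \thmref{Refinedfloor} and \thmref{Caporaso} apply directly with $c = 2k$ odd. No new recursion, tropical model, or half-integer tangency data is needed. The paper's Proposition in fact treats integer and half-integer $k$ uniformly through the parameter $c = 2k$, and then uses the further polynomiality in $c$ from \thmref{mainthm}(1) --- degree $1$ in $c$ once $c, d \geq \delta$ --- to pass from two computed values of $c$ (namely $c = 4, 5$) to all $k$ at once. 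Your proposal omits this step and instead proposes to handle each $k$ separately, which is both less efficient and, for the half-integer case, led you to an unnecessary detour.
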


\begin{prop}
\begin{enumerate}
\item \conjref{blowk} is true for all $d$, all $k\le 5$ and $\delta\le 4$. 
\item The equation \eqref{blowkk} holds for all $d,k\ge 0$ with $\delta\le d-k$ and $\delta\le 4$.
\end{enumerate}
\end{prop}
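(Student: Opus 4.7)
The plan is to combine the Caporaso--Harris recursion of \thmref{Caporaso} with the multiplicativity theorem \thmref{mainthm}, following the template of \corref{refpol} and \propref{propsigman}. Writing $dH - kE = 2kF + (d-k)H$ on $\Sigma_2$ puts the line bundle in the form $cF + d'H$ with $c = 2k$, $d' = d - k$, so the recursion (\ref{refrec}) computes $N^{(\Sigma_2,dH-kE),\delta}(y)$, and by exponentiating the generating series, $Q^{(\Sigma_2,dH-kE),\delta}(y)$, for any prescribed data $(d,k,\delta)$. The right-hand side of \eqref{blowkk} is a finite $q$-series computation for each $(d,k,\delta)$: since $\widetilde{DG}_2 = q + O(q^2)$, the extraction $\Coeff_{q^{d^2+2d-k^2}}\bigl[\widetilde{DG}_2^{d^2+2d-k^2-\delta}\cdot X\bigr]$ equals $\Coeff_{q^\delta}\bigl[(\widetilde{DG}_2/q)^{d^2+2d-k^2-\delta}\cdot X\bigr]$, which requires only precision of order $\delta$ in each factor, well within the precision of $B_1, B_2$ recorded after \corref{refpol}.

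For part (1), fix $k \in \{0, \tfrac12, 1, \ldots, 5\}$ and $\delta \in \{0, 1, 2, 3, 4\}$. By \thmref{mainthm}(3) applied with $m = 2$ and $c = 2k$, the function $d \mapsto Q^{(\Sigma_2,dH-kE),\delta}(y)$ is a polynomial in $d$ of degree at most two with coefficients in $\Q[y^{\pm 1}]$ for all $d$ with $d - k \ge \delta$. The right-hand side of \eqref{blowkk} is, for the same reasons, a polynomial of the same shape in $d$: with $k$ and $\delta$ fixed, the $d$-dependence lives only in the exponent of $\widetilde{DG}_2/q$ and in the factor $B_2^{-4d}$, and after the coefficient reduction of the first paragraph both contribute polynomially in $d$. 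Three Caporaso--Harris computations at distinct values of $d$ with $d - k \ge \delta$ therefore determine the left-hand polynomial, and comparison with the polynomial expansion of the right-hand side yields the identity throughout the polynomial regime. The finitely many remaining pairs $(d,k)$ with $(\delta-1)/2 \le d - k < \delta$ still permitted by the hypothesis $\delta \le 2(d-k)+1$ of \conjref{blowk} are handled one at a time by direct computation of both sides.

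For part (2), \thmref{mainthm}(1) gives that $Q^{(\Sigma_2,dH-kE),\delta}(y)$ is a $\Q[y^{\pm 1}]$-linear combination of $1, k, d-k, k(d-k), (d-k)^2$ provided both $2k \ge \delta$ and $d - k \ge \delta$; under $\delta \le 4$ this requires $k \ge \delta/2$. By \remref{fkkk}, $\overline f_{2k}(q) \equiv 1 \pmod{q^{2k+1}}$, so whenever $2k \ge \delta$ the factor $\overline f_{2k}$ contributes only its constant term to the coefficient of $q^\delta$ in the reduced extraction, and the right-hand side of \eqref{blowkk} becomes a polynomial in $d$ and $k$ of exactly the same shape as the left-hand side. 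Part (1), specialised to values $k \in \{\lceil \delta/2 \rceil / 1, \ldots, 5\}$ (seven or more half-integer values for each $\delta \le 4$), supplies more than enough evaluations to identify this polynomial uniquely, establishing \eqref{blowkk} for all $k$ with $k \ge \delta/2$ and $d - k \ge \delta$. The remaining cases $k < \delta/2$ with $d - k \ge \delta$ are already covered by part (1), since $\delta \le d - k$ implies the hypothesis $\delta \le 2(d-k)+1$ of \conjref{blowk}.

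The principal obstacle is computational rather than conceptual: the Caporaso--Harris recursion on $\Sigma_2$ must be run for enough pairs $(d, k)$, encoded via the non-negative integer $c = 2k$, to determine all coefficients of the target polynomials in the regimes of \thmref{mainthm}(1) and (3). The only subtlety that deserves verification is that $\overline f_{2k}(q)$ genuinely drops out of the relevant coefficient extraction throughout the stable range $2k \ge \delta$; otherwise its raw $k$-dependence (through both coefficients and exponents, as in the formula of \remref{fkkk}) could not be reconciled with the polynomial form in $k$ predicted by \thmref{mainthm}(1).
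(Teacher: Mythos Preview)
Your proposal is correct and follows essentially the same strategy as the paper: compute a finite set of refined Severi degrees on $\Sigma_2$ via the Caporaso--Harris recursion, then invoke \thmref{mainthm}(3) to extend in the $d$-direction for each fixed $k$, and \thmref{mainthm}(1) to extend in the $k$-direction. Your treatment is in fact slightly more careful than the paper's in one respect: you make explicit, via \remref{fkkk} and the congruence $\overline f_{2k}(q)\equiv 1\pmod{q^{2k+1}}$, why the right-hand side of \eqref{blowkk} is also polynomial of the required shape in the stable range, whereas the paper leaves this implicit.
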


\begin{proof}
We use the Caporaso-Harris recursion to compute $N^{(\Sigma_2,dH+cF),\delta}(y)=N^{(\Sigma_2,(d+c/2)H-c/2E),\delta}(y)$ for $\delta\le 8$, $d\le 6$ and $c\le 5$.
We find in this realm that $N^{(\Sigma_2,(nH-kE),\delta}(y)$ is equal to the right hand side of \conjref{blowk} for $\delta\le 2(n-k)+1$.
By \thmref{mainthm} $Q^{(\Sigma_2,dH+cF),\delta}(y)$ is for fixed $c\ge 0$ and for $d\ge \delta$ a polynomial of degree $2$ in $d$. Thus the above computations
determine this polynomial for $\delta\le 4$, and $c\le 5$. On the other hand in dependence of $c$ and $d$  we have that $Q^{(\Sigma_2,dH+cF),\delta}(y)$ 
is for $c,d\ge \delta$ a polynomial in $c$ and $d$ of degree $2$ in $d$ and $1$ in $c$. By the above we know this polynomial as a polynomial in $d$ for $c=4$ and $c=5$.
Thus it is determined and the claim follows.
\end{proof}

\section{Counting curves with prescribed multiple points}

Let $S$ be a smooth projective surface, let $p_1,\ldots,p_r$ be general points on $S$, and let $\widehat S$ be the blowup of $S$ in the $p_i$ with exceptional divisors
$E_i$. Let $n_1,\ldots, n_r\in \Z_{\ge 1}$.
Let $L$ be a sufficiently ample line bundle on $S$, and denote by the same letter its pullback to $\widehat S$.
Note that $N^{(\widehat S,L-\sum_i n_i E_i),\delta}(1)$
counts the complex curves on $S$ in $|L|$ with points of multiplicity $n_i$ in $p_i$ which have in addition $\delta$ nodes and pass through $\dim(|L-\sum_i n_i E_i)|)-\delta$ general points of $S$.
If $L$ is sufficiently ample, then the multiple points at the $p_i$ impose $\sum_i \binom{n_i+1}{2}$ independent conditions on curves in $|L|$.
Furthermore we see that 
$$\chi(L-\sum_i n_i E_i)=\chi(L)-\sum_i \binom{n_i+1}{2}.$$

Now assume that $S$ is a smooth projective toric surface. 
Let the $p_i\in S$ be fixed points of the torus action, so that $\widehat S$ is again a toric surface and the exceptional divisors $E_i$ are torus-invariant divisors.
Then by the above we can view $N^{(\widehat S,L-\sum_i n_i E_i),\delta}(y)$ as a refined count of curves in $|L|$ on $S$ with points of multiplicity $n_i$ at $p_i$ for all  $i$
and in addition $\delta$ nodes which  
pass through $$\dim(|L|)-\delta-\sum_{i} \binom{n_i+1}{2}$$ general points on $S$.

\begin{nota}
We denote $N^{(S,L),\delta}_{n_1,\ldots,n_r}(y):=N^{(\widehat S,L-\sum_i n_i E_i),\delta}(y)$.
\end{nota}

For an Eisenstein series $G_{2k}(q)$, we denote 
$$\overline G_k(q):=G_k(q)-G_k(q^2)=\sum_{n> 0}\sum_{\genfrac{}{}{0pt}{}{d|n}{\frac{n}{d}\text{ odd}} }d^{2k-1} q^n.$$
We write again $D:=q\frac{\partial}{\partial q}$. Note that $D^lG_{2k}(q)$ and $D^l\overline G_{2k}(q)$ are quasimodular forms of weight $2k+2l$.

\begin{conj}\label{multcon}
For each $i\ge 1$ there exists a universal power series
$H_i\in \Q[y^{\pm 1}][[q]]$, such that, whenever $L$ be sufficiently ample with respect to $\delta$, $r$ and $n_1,\ldots,n_r$, we have
\begin{equation}
\begin{split}
&N^{(S,L),\delta}_{n_1,\ldots,n_r}(y)=\\&\Coeff_{q^{(L^2-LK_S)/2}}\left[\widetilde{DG}_2(y,q)^{\chi(L)-1-\delta-\sum_i \binom{n_i+1}{2}}
\frac{B_1(y,q)^{K_S^2}B_2(y,q)^{LK_S}D\widetilde{DG}_2(y,q)}{(\widetilde\Delta(y,q)\cdot D\widetilde{DG}_2(y,q))^{\chi(\oo_S)/2}}\prod_{i=1}^r H_{n_i}(y,q)\right].
\end{split}
\end{equation}
Furthermore we conjecture for all $m>0$ the following:
\begin{enumerate}
\item $H_m(y,q)$ can be expressed in terms of Jacobi theta functions and quasimodular forms.
\item $H_m(1,q)$ is a (usually non-homogeneous) polynomial in the $D^lG_{2k}(q)$ of weight $\le 4k$.
\item $H_m(-1,q)$ is a (usually non-homogeneous) polynomial in the $D^lG_{2k}(q), D^l\overline G_{2k}(q)$ of weight $\le 2k$.
\end{enumerate}
\end{conj}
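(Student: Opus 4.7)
The plan is to reduce the conjecture to the long--edge--graph multiplicativity machinery of Section~3 applied to the blowup $\widehat S\to S$ at the $p_i$, and then to isolate and identify the local correction at each exceptional divisor. Since the $p_i$ are torus-fixed, $\widehat S$ is again toric, $L-\sum_i n_iE_i$ is a toric divisor class, and by definition $N^{(S,L),\delta}_{n_1,\ldots,n_r}(y)=N^{(\widehat S,L-\sum_i n_iE_i),\delta}(y)$. The line bundle $L-\sum_i n_iE_i$ is typically not $\delta$-very ample on $\widehat S$, so by analogy with Conjectures~\ref{ruledblow} and \ref{A1con} one expects a correction factor concentrated near each $E_i$.

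Concretely, I would proceed as follows. First, encode $N^{(\widehat S,L-\sum_in_iE_i),\delta}(y)$ via strictly $\beta$-allowable long edge graphs for the relevant $h$-transversal polygon associated to $(\widehat S,L-\sum n_iE_i)$, using the extended reformulation sketched in the Remark after Theorem~\ref{severisequence}. Second, take the formal logarithm and apply Corollary~\ref{templatesum}: the sum over templates of a fixed cogenus $\delta$ decomposes, once $L$ is sufficiently ample with respect to $\delta$ and the $n_i$, into a globally linear part (the templates that ``see'' only the $S$-directions of the polygon) and disjoint local contributions at each corner of the polygon that has been cut by the blowup at $p_i$. The global part, summed by the argument of Theorem~\ref{mainthm}, feeds back into the $\widetilde{DG}_2, B_1, B_2, \widetilde\Delta$ factors appearing in Conjecture~\ref{genfun}; the local contribution at $p_i$ depends only on the truncation data of the polygon at $p_i$, i.e. only on $n_i$. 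Exponentiating yields a product of universal series $H_{n_i}(y,q)$, and a bookkeeping check on $\chi(L-\sum n_iE_i)=\chi(L)-\sum\binom{n_i+1}{2}$ accounts for the shift in the exponent of $\widetilde{DG}_2$ in the statement.

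The additional claims (1)--(3) would then be established in the same spirit as the modular identifications of Section~4. Computing enough coefficients of $H_n(y,q)$ via the refined Caporaso--Harris recursion of Theorem~\ref{Caporaso}, applied to $(\widehat S,L-nE)$ for low $n$, gives data to guess a Jacobi-theta-function expression, from which (1) follows by matching $q$-expansions. For (2), the specialisation $y=1$ reduces to the classical multiple-point generating series of \cite{G}, whose quasimodular structure is known; for (3), $y=-1$ collapses $\widetilde{DG}_2$ to $\overline G_2$ and one fits $H_n(-1,q)$ against the finite-dimensional space of polynomials in $D^\ell G_{2k}, D^\ell \overline G_{2k}$ of the conjectured weight, cross-checked against the Welschinger version of the Caporaso--Harris recursion. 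The main obstacle is Step~2: proving rigorously that the local template contribution at $p_i$ depends only on $n_i$ and not on $L$ or on the global geometry of $S$. This requires a uniform bound on template size in terms of $\delta$ (sharper than $\ell(\Gamma)\le \delta$ from \cite[Lem.~4.2]{LO}) guaranteeing that, once $L$ is ample enough, the templates responsible for the local correction at distinct $p_i$'s never overlap, together with an invariance statement showing the contribution is unchanged if one enlarges the ambient polygon away from $p_i$; this is the refined analogue of the independence arguments underlying the proofs in \cite{LO} for rational double points.
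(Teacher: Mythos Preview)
The statement you are addressing is a \emph{conjecture}, and the paper does not prove it. What the paper offers is computational evidence: using the refined Caporaso--Harris recursion (\thmref{Caporaso}) on $\Sigma_1$ (the blowup of $\P^2$ at a point), the authors compute $N^{(\Sigma_1,dH+mF),\delta}(y)$ for $d\le 11$, $m\le 4$, $\delta\le 22$, and then invoke \thmref{mainthm}(3) to promote this to polynomial identities in $d$, obtaining a verification of the specialised \conjref{P2blow} for $m\le 4$ and $\delta\le 9$. The explicit formulas for $H_m(y,q)$, $H_m(1,q)$, $H_m(-1,q)$ listed after the conjecture are likewise extracted by fitting against this data; parts (1)--(3) remain open in general.

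Your proposal is not a proof but an outline of a strategy, and you are honest about this: the crucial step is the locality claim that the template contribution near the corner cut by $E_i$ depends only on $n_i$. This is genuinely not available with the tools of the paper. The long-edge-graph/template machinery in \cite{L}, \cite{LO} and \thmref{mainthm} is developed only for the sequences $s(c,m,d)$, i.e.\ for $\P^2$, $\P(1,1,m)$ and $\Sigma_m$, not for general $h$-transversal polygons with multiple truncated corners; the Remark after \thmref{severisequence} records the general formula but does not prove linearity or locality for it. Moreover, even for a single blowup the paper does not establish that the correction series is independent of the choice of $(S,L)$---that is precisely what is being conjectured. Your sketch of (1)--(3) is also only heuristic: you propose to ``guess'' Jacobi-form expressions and match $q$-expansions, which is exactly the evidence-gathering the paper does, not a proof. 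So the gap is fundamental: you have restated the conjecture as a programme, identified the key missing lemma, but have not supplied it; neither has the paper.
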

For small $m$ we explicitly conjecture the following formulas:
%\begin{align*}
%H_1(y,q)&=\widetilde{DG}_2(y,q),\\
%H_2(y,q)&=\frac{1}{y-2+y^{-1}}\sum_{n>0}\sum_{d|n}
%\left(\left(-\frac{n^3}{d^3}+\frac{n^2}{d}-\frac{n}{d}\right)
%\frac{(y^{d/2}-y^{-d/2})(y^{(d-1)/2}-y^{-(d-1)/2})}{(y^{1/2}-y^{-1/2})(y-y^{-1})}\right.\\
%&\left.+\frac{1}{2}\left(-\frac{n^3}{d^3}+2\frac{n^2}{d^2}+\frac{n^2}{d}-n-\frac{n}{d}\right)
%\frac{y^{d}-y^{-d}}{y-y^{-1}}\right)q^n.
%\end{align*}
%$H_1(y,q)=\widetilde{DG}_2(y,q)$, $ H_2(y,q)=\frac{1}{y-2+y^{-1}}(F_1(y,q)+F_2(y,q))$  with 
%\begin{align*}F_1(y,q)&=\sum_{n>0}\sum_{d|n}
%\left(-\frac{n^3}{d^3}+\frac{n^2}{d}-\frac{n}{d}\right)
%\frac{(y^{d/2}-y^{-d/2})(y^{(d-1)/2}-y^{-(d-1)/2})}{(y^{1/2}-y^{-1/2})(y-y^{-1})}q^n\\
%F_2(y,q)&=\sum_{n>0}\sum_{d|n}\frac{1}{2}\left(-\frac{n^3}{d^3}+2\frac{n^2}{d^2}+\frac{n^2}{d}-n-\frac{n}{d}\right)
%\frac{y^{d}-y^{-d}}{y-y^{-1}}q^n.
%\end{align*}
\begin{enumerate}
\item For $m\le 2$ we conjecture
\begin{align*}H_1(y,q)&=\widetilde{DG}_2(y,q),\quad H_2(y,q)=\frac{F_1(y,q)}{(y^{1/2}-y^{-1/2})^4}+\frac{F_2(y,q)}{(y^{1/2}-y^{-1/2})^2(y-y^{-1})},
\end{align*} with 
\begin{align*}
F_1(y,q)&=\sum_{n>0}\sum_{d|n}\frac{1}{2}
\left(-\frac{n^3}{d^3}+\frac{n^2}{d}-\frac{n}{d}\right)(y^{d/2}-y^{-d/2})^2 q^n\\
F_2(y,q)&=\sum_{n>0}\sum_{d|n}\left(\frac{n^2}{d^2}-\frac{n}{2}\right)
\frac{y^{d}-y^{-d}}{y-y^{-1}}q^n.\end{align*}

%\begin{align*}F_1(y,q)&=\sum_{n>0}\sum_{d|n}\frac{1}{2}
%\left(-d^3+nd-d\right)
%\frac{(y^{n/(2d)}-y^{-n/(2d)})^2}{(y^{1/2}-y^{-1/2})^2}q^n\\
%F_2(y,q)&=\sum_{n>0}\sum_{d|n}\left(d^2-n/2\right)
%\frac{y^{n/d}-y^{-n/d}}{y-y^{-1}}q^n.
%\end{align*}

\item For the specialisation at $y=1$ we conjecture the following (dropping the $q$ from the notation).
\begin{align*}
H_1(1)&=DG_2, \\ H_2(1)&=-\frac{1}{24}DG_2+ \frac{1}{6}D^2G_2 -\frac{1}{8}DG_4 -\frac{1}{24}D^3G_2+ \frac{1}{24}D^2G_4\\
H_3(1)&=\frac{DG_2}{90} -\frac{D^2G_2}{18}+ \frac{DG_4}{24}- \frac{13D^3G_2}{288} -\frac{73D^2G_4}{1440}
+\frac{DG_6}{120}
-\frac{D^4G_2}{144}+ \frac{13D^3G_4}{1440}\\&-\frac{D^2G_6}{480}+ \frac{D^5G_2}{2880} -\frac{D^4G_4}{2016}+\frac{D^3G_6}{6912}+\frac{\Delta}{241920}\\
%H_4(1,q)&=-\frac{9}{1120}DG_2(q)+ \frac{7}{160}D^2G_2(q) -\frac{21}{640}DG_4(q) -\frac{1063}{23040}D^3G_2(q)+ \frac{1207}{23040}D^2G_4(q)\\&
% -\frac{3}{320}DG_6(q)+ \frac{79}{5760}D^4G_2(q) -\frac{43}{2304}D^3G_4(q)+ \frac{149}{26880}D^2G_6(q) -\frac{1}{2688}DG_8(q) \\&-\frac{91}{69120}D^5G_2(q)+\frac{95}{48384}D^4G_4(q) -\frac{461}{645120}D^3G_6(q)+ \frac{101}{1451520}D^2G_8(q)\\& -\frac{11}{5806080}\Delta+ \frac{1}{17280}D^6G_2(q) -\frac{89}{967680}D^5G_4(q)
% + \frac{1}{25920}D^4G_6(q)\\&
%  -\frac{1}{207360}D^3G_8(q)+ \frac{1}{2903040}D\Delta(q) -\frac{1}{967680}D^7G_2(q)+ \frac{1}{580608}D^6G_4(q)\\& -\frac{1}{1244160}D^5G_6(q)+ \frac{1}{8211456}D^4G_4(q) -\frac{1}{84913920}D^2\Delta(q)+\frac{1}{864864}\Delta(q)G_4(q)\\
  H_4(1)&=-\frac{9DG_2}{1120}+ \frac{7D^2G_2}{160} -\frac{21DG_4}{640} -\frac{1063D^3G_2}{23040}+ \frac{1207D^2G_4}{23040} -\frac{3DG_6}{320}+ \frac{79D^4G_2}{5760} \\&
-\frac{43D^3G_4}{2304}\+ \frac{149D^2G_6}{26880}-\frac{DG_8}{2688} -\frac{91D^5G_2}{69120}+\frac{95D^4G_4}{48384} -\frac{461D^3G_6}{645120}+ \frac{101D^2G_8}{1451520}\\& 
-\frac{11\Delta}{5806080}+ \frac{D^6G_2}{17280} -\frac{89D^5G_4}{967680}
 + \frac{D^4G_6}{25920} -\frac{D^3G_8}{207360}+ \frac{D\Delta}{2903040} -\frac{D^7G_2}{967680}\\&
 + \frac{D^6G_4}{580608} -\frac{D^5G_6}{1244160}+ \frac{D^4G_4}{8211456} -\frac{D^2\Delta}{84913920}+\frac{\Delta G_4}{864864}
\end{align*}
\item At $y=-1$ we conjecture
\begin{align*}
H_1(-1)&=\overline G_2(q),\\
H_2(-1)&=\frac{1}{8}\big(\overline G_2-D\overline G_2+\overline G_4-DG_2\big),\\
H_3(-1)&=\frac{1}{24}\overline G_2-\frac{1}{24}DG_2+\frac{7}{96}\overline G_4-\frac{7}{96}D\overline G_2+\frac{1}{2}
\overline G_2^3-\frac{1}{192}D\overline G_4-\frac{5}{64}G_4\overline G_2+\frac{1}{96}D^2G_2\\&-\frac{5}{1024}DG_4,\\
H_4(-1)&=\frac{3\overline G_2}{128}-\frac{5DG_2}{192} -\frac{67D\overline G_2}{1536}+\frac{67\overline G_4}{1536}+\frac{35D^2G_2}{2304} -\frac{247DG_4}{24576}
+\frac{55\overline G_2^3}{144} -\frac{55G_4\overline G_2}{1536}\\&
-\frac{11D\overline G_4}{4608}
+\frac{D^3G_2}{192}+ \frac{25D^2G_4}{6144}-\frac{7DG_6}{8192}+ \frac{11\overline G_2^4}{8}-\frac{13\overline G_2D^2G_2}{192} +\frac{35\overline G_2 DG_4}{512}\\&
 -\frac{21G_6\overline G_2}{1024}+\frac{D^2\overline G_4}{512}.
\end{align*}
\end{enumerate}

\begin{rem}
Part (1)  of \conjref{multcon} is not formulated in a very precise way.  We  want to illustrate the statement for $H_1(y,q)$ and $H_2(y,q)$, which we have conjecturally determined.
Writing $\widetilde{DG}_2(y,q)=\frac{F_0(y,q)}{y-2+y^{-1}}$ we have
\begin{align*}F_0(y,q)&=-\frac{D\theta(y)}{\theta(y)}-3G_2,\\
F_1(y,q)&=\frac{1}{2}\frac{(D\theta(y))^2}{\theta(y)^2}+3\frac{D\theta(y)}{\theta(y)}G_2+\frac{1}{2}\frac{D\theta(y)}{\theta(y)}+\frac{15}{8}G_4-\frac{9}{4}DG_2+\frac{3}{2}G_2,\\
F_2(y,q)&=-\frac{1}{2}\frac{D\theta(y)\theta'(y)}{\theta(y)^2}-\frac{1}{6}\frac{D\theta'(y)}{\theta(y)}-2G_2\frac{\theta'(y)}{\theta(y)}.\\
\end{align*}
\end{rem}
\begin{pf} A similar computation has been done in \cite[Rem~1.4]{GS2}.
By definition we have
\begin{align*}
F_0(y,q)&=\sum_{m>0}\sum_{d>0} m(y^d-2+y^{-d})q^{md}=\sum_{md>0} my^dq^{md}-2G_2(q)+\frac{1}{12}.
\end{align*}
In \cite[page 456, compare (iii) and (vii)]{Z} it is proved that  
\begin{equation}\label{zagfun}
\frac{\theta'(0)\theta(wy)}{\theta(w)\theta(y)}=\frac{wy-1}{(w-1)(y-1)}-\sum_{nd>0}
\sgn(d)w^ny^dq^{nd}.
\end{equation}
Write $w=e^x$ and take the coefficient of $x$ on both sides of \eqref{zagfun}.
By the identity  \cite[eq.~(7)]{Z} we have
$$\frac{x\theta'(0)}{\theta(w)}=\exp\left(2\sum_{k\ge 2} G_k(q) \frac{z_1^k}{k!}\right).$$
This gives
$$\Coeff_{x}\left[\frac{\theta'(0)\theta(wy)}{\theta(w)\theta(y)}\right]=\Coeff_{x^2}\left[\frac{\theta(wy)}{\theta(y)}\right]+G_2(\tau)
=\frac{1}{2}\frac{\theta''(y)}{\theta(y)}+G_2(\tau)=\frac{D\theta(y)}{\theta(y)}+G_2(\tau),$$
where the last step is by the heat equation $\frac{1}{2}\theta''(y)=D\theta(y)$. 
On the other hand we compute
$$\Coeff_{z_1}\left[\frac{wy-1}{(w-1)(y-1)}-\sum_{nd>0}
\sgn(d)w^ny^dq^{nd}\right]=\frac{1}{12}-\sum_{nd>0} n y^{d}q^{nd}.$$
This proves the formula for $F_0$.

 We have 
\begin{align*}
F_2(y,q)&=\sum_{md>0} \sgn(d) (m^2-md/2)y^d)q^{md}.
\end{align*}
In \cite[Rem.~1.4]{GS2} it is shown (the statement there contains a misprint) that
$$\sum_{md>0} \sgn(d) m^2y^dq^{md}=-\frac{1}{\theta(y)}\left(\frac{2}{3}D\theta'(y)+2G_2(q)\theta'(y)\right).$$
We see by \eqref{zagfun} that 
\begin{align*} \sum_{md>0} \sgn(d) (-md/2)y^d)q^{md}=\frac{1}{2}D\left(\frac{\theta'(0)\theta(wy)}{\theta(w)\theta(y)}\Big|_{w=1}\right)=\frac{1}{2}D\left(\frac{\theta'(y)}{\theta(y)}\right).
\end{align*}
This shows the formula for $F_2$.

A similar but slightly more tedious computation shows the formula for $F_1$.
\end{pf}

The conjectural formulas of \conjref{multcon} were found by doing computations for $\P^2$ and its blowup $\Sigma_1$ with exceptional divisor $E$.
We use the Caporaso Harris recursion formula to compute 
$N^{(\Sigma_1,dH+mF),\delta}(y)=N^{(\Sigma_1,(d+m)H-mcE,\delta}$ for $d\le 11$, $m\le 4$ and $\delta\le 22$, in this realm the following conjecture is true.

\begin{conj}\label{P2blow}
There are power series $H_m(y,q)\in \Q[y^{\pm1}][[q]]$, such that the following holds.
For $d>0$, and $0\le m\le 4$ and $\delta\le 2d+1+m(m+1)/2$ we have
\begin{equation*}
\begin{split}
&N^{(\P^2,dH),\delta}_{m}(y)=\\&\Coeff_{q^{(d(d+3)/2}}\left[\widetilde{DG}_2(y,q)^{d(d+3)/2-m(m+1)/2-\delta}
\frac{B_1(y,q)^{9}(D\widetilde{DG}_2(y,q))^{1/2}}{B_2(y,q)^{-3d}\widetilde\Delta(y,q)^{1/2}}H_{m}(y,q)\right].
\end{split}
\end{equation*}
Furthermore $H_1(y,q)$, $H_2(y,q)$ coincide with the functions with the same name from \conjref{multcon},
and $H_i(1,q)$, $H_i(-1,q)$ coincide for $i=1,2,3,4$ with the $H_i(1)$, $H_i(-1)$ from \conjref{multcon}.
\end{conj}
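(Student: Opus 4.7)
The plan is to reduce the conjecture to a finite verification by exploiting the identification of $\widehat{\P^2}$ at a point with $\Sigma_1$ together with the polynomiality of \thmref{mainthm}(3). If $E$ is the exceptional divisor of $\Sigma_1\to\P^2$, then pulling back $dH$ and subtracting $mE$ produces the class $(d-m)H+mF$ on $\Sigma_1$, so that
\[
N^{(\P^2,dH),\delta}_m(y)=N^{(\Sigma_1,(d-m)H+mF),\delta}(y),
\]
and these numbers are accessible via the Caporaso-Harris recursion of \thmref{Caporaso}. The first step is therefore to tabulate $N^{(\Sigma_1,dH+mF),\delta}(y)$ for $d\le 11$, $m\le 4$ and $\delta\le 22$, as the authors report.

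The second step is to extract $H_m(y,q)$ from the data. Writing $H_m(y,q)=\sum_{k\ge 0}h_{m,k}(y)q^k$, the conjectural formula at fixed $d$ and growing $\delta$ is upper-triangular in the $h_{m,k}$: because $\widetilde{DG}_2(y,q)$ starts at $q$, the coefficient at $q^{d(d+3)/2}$ on the right-hand side involves only those $h_{m,k}$ with $0\le k\le m(m+1)/2+\delta$, and $h_{m,m(m+1)/2+\delta}(y)$ appears with coefficient $1$. Growing $\delta$ at a single value of $d$ therefore solves recursively for the high-index coefficients, while the low-index coefficients $h_{m,k}$ with $k<m(m+1)/2$ are pinned down by the additional $(d,\delta)$ equations in the tabulated range; consistency of this over-determined linear system in the $h_{m,k}(y)$ is exactly the existence claim of the conjecture.

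Third, verify that the extracted $H_m(y,q)$ does make the conjectural identity hold throughout the region $\delta\le 2d+1+m(m+1)/2$. For each fixed $m$, \thmref{mainthm}(3) says that $d\mapsto Q^{(\Sigma_1,dH+mF),\delta}(y)$ is a polynomial of degree at most $2$ on the range $d\ge\delta$; the formal logarithm of the conjectural right-hand side has the same shape, since unpacking the coefficient extraction via the residue identity of \remref{reform} rewrites the conjecture as the product of the $\P^2$-multiplicative generating function $A_1^{d^2}A_2^{-3d}A_3^{9}A_4$ of \eqref{prodP2} with a correction factor depending on $m$ but not on $d$, whose logarithm is manifestly quadratic in $d$. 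Matching the two quadratics against the tabulated data at three values $d\ge\delta$ therefore forces agreement for all $d\ge\delta$; the remaining small-$d$ corner of the claimed region lies inside the tabulated range and is checked directly.

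Finally, the identification of $H_m(\pm 1,q)$ with the explicit formulas of \conjref{multcon} reduces to a coefficient-by-coefficient comparison of the extracted series with the $q$-expansions of the listed (quasi)modular expressions, out to the computed order. The main obstacle is computational rather than conceptual: the Caporaso-Harris recursion on $\Sigma_1$ branches heavily in the multi-indices $(\alpha,\beta)$, and reaching $\delta=22$ with $y$ kept symbolic demands aggressive memoization and a compact representation of Laurent polynomials in $y$. The one substantive input beyond this bookkeeping is the polynomiality statement \thmref{mainthm}(3), which is what makes finite tabulation sufficient to certify the identity on the entire claimed polynomial region.
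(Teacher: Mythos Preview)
The approach you outline is essentially the one the paper uses, but you overstate what it establishes. The statement is a \emph{conjecture}; the paper does not prove it in the full stated range. What the paper actually records is (i) computational verification for $d\le 11$, $m\le 4$, $\delta\le 22$ via the Caporaso--Harris recursion on $\Sigma_1$, and (ii) a Proposition that the conjecture holds for all $d$ when $\delta\le 9$, proved exactly by your polynomiality-plus-three-values argument from \thmref{mainthm}(3).

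The gap in your write-up is the claim that finite tabulation together with \thmref{mainthm}(3) ``reduces the conjecture to a finite verification'' and certifies the identity ``throughout the region $\delta\le 2d+1+m(m+1)/2$.'' Polynomiality of $d\mapsto Q^{(\Sigma_1,dH+mF),\delta}(y)$ is only asserted on $d\ge\delta$, and matching a degree-$2$ polynomial against the tabulated data requires three values $d\ge\delta$ with $d\le 11$; this forces $\delta\le 9$. For $9<\delta\le 11$ you have at most two such points, and for $\delta>22$ no tabulated data at all, so the method cannot reach the full range for arbitrarily large $\delta$. Likewise, your extraction of $H_m(y,q)$ is only to finite $q$-order (at most $q^{m(m+1)/2+22}$), so the comparison in your final step with the explicit (quasi)modular expressions of \conjref{multcon} is a check to that order, not an identification of power series. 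In short: your steps coincide with the paper's, but the correct conclusion is the paper's Proposition ($m\le 4$, $\delta\le 9$), together with the finite-range verification, rather than the full conjecture.
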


\begin{prop} 
\conjref{P2blow} is true from $m\le 4$ and $\delta\le 9$.
\end{prop}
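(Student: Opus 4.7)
The plan is to apply the same template as in the proof of \propref{propsigman}: combine a Caporaso--Harris computation of refined Severi degrees on $\Sigma_1$ with the polynomiality in the $\P^2$-degree supplied by \thmref{mainthm}.

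First I would run the refined Caporaso--Harris recursion of \thmref{Caporaso} on $\Sigma_1$ to tabulate the refined Severi degrees $N^{(\Sigma_1,eH+mF),\delta}(y)$ for all $e\le 11$, $m\le 4$ and $\delta\le 22$. Writing the $\P^2$-degree as $d=e+m$ and using the identification $N^{(\P^2,dH),\delta}_m(y)=N^{(\Sigma_1,(d-m)H+mF),\delta}(y)$, which follows from $dH_{\P^2}-mE=(d-m)H+mF$ on $\Sigma_1$, this supplies the left-hand side of \conjref{P2blow} for all $d\in\{m,\dots,m+11\}$ and $\delta\le 22$. Taking formal logarithms then yields the matching $Q^{(\Sigma_1,(d-m)H+mF),\delta}(y)$ on the same grid.

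For fixed $m\le 4$ and fixed $\delta$, \thmref{mainthm}(3), applied with ruled-surface index $1$ and parameter $c=m$, tells us that $Q^{(\Sigma_1,eH+mF),\delta}(y)$ is a polynomial in $e$, and hence in $d=e+m$, of degree at most $2$ for $e\ge\delta$. On the other side, \remref{reform} converts the right-hand side of \conjref{P2blow} into a generating function in $\delta$ of multiplicative type $(t/g)^{\chi(L)-m(m+1)/2}\,B_1(y,g)^{9}\,B_2(y,g)^{-3d}\,(gg'/\widetilde\Delta(y,g))^{1/2}\,H_m(y,g)$, whose $\delta$-th log coefficient is polynomial of degree at most $2$ in $d$ at fixed $m$ (the quadratic piece from the exponent of $t/g$, the linear one from the exponent of $B_2$). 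Matching the two polynomials in $d$ at three values in the tabulated range (e.g.\ $d\in\{9,10,11\}$ for $m=0$, with the obvious shift for larger $m$) forces equality as polynomials, and hence establishes the conjectured formula for every $d\ge\delta$.

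Finally, the residual range $d<\delta\le 2d+1+m(m+1)/2$, with $m\le 4$ and $\delta\le 9$, consists of finitely many triples and is checked directly against the tabulated values with the prescribed $H_m(y,q)$ (fully specified by \conjref{multcon} for $m\le 2$, and through its $y=\pm 1$ specialisations for $m=3,4$). The main obstacle I expect is purely computational: the refined Caporaso--Harris recursion carries a Laurent polynomial in $y$ at each node, and the extra parameter $m$ inflates the recursion tree substantially compared to the pure $\P^2$ computation used for \corref{refpol}.
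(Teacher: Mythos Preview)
Your approach is essentially the paper's own: the paper's proof is the two-line remark that by \thmref{mainthm}(3) the quantity $Q^{(\Sigma_1,dH+mF),\delta}(y)$ is, for $d\ge\delta$, a polynomial of degree~$2$ in $d$, which is then pinned down by the tabulated values at $d=9,10,11$ coming from the Caporaso--Harris computation stated just before \conjref{P2blow}. You have unpacked this more carefully --- making explicit that the right-hand side also has degree-$2$ logarithmic coefficients in $d$ via \remref{reform}, and separately disposing of the finitely many residual cases with $d<\delta$ by direct comparison --- but the skeleton is identical. One small point: the three interpolation values should be chosen in the $\Sigma_1$-parameter $e$ (so $e\in\{9,10,11\}$ for every $m$), since the polynomiality bound from \thmref{mainthm}(3) is $e\ge\delta$; your ``obvious shift'' to $d=e+m$ is fine but is a shift of the \emph{label}, not of the interpolation nodes.
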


\begin{pf} The argument is the same as in several proofs before. By \thmref{mainthm} we get that $Q^{(\Sigma_1,dH+mF),\delta}$ is for $\delta\le d$ a polynomial of degree 
$2$ in $d$, which we know for $9\le d\le 11$. The result follows.
\end{pf}

Let $S$ be a toric surface and  $\widehat S$ be the blowup of $S$ in torus fixed point. Given $\delta$, if $m$ is sufficiently large and $L$ is sufficiently ample on $S$, then 
$L-mE$ will be sufficiently ample on $\widehat S$, so that \conjref{genfun} will apply to the pair $(\widehat S,L-mE)$:
\begin{align*}
&N^{(S,L),\delta)}_m(y)=N^{(\widehat S,L-mE),\delta}(y)=\\&\Coeff_{q^{(L^2-LK_S)/2-\binom{m+1}{2}}}
\left[\widetilde{DG}_2(y,q)^{\chi(L)-1-\delta-\binom{m+1}{2}}
\frac{B_1(y,q)^{K_S^2-1}B_2(y,q)^{LK_S+m}D\widetilde{DG}_2(y,q)}{(\widetilde\Delta(y,q)\cdot D\widetilde{DG}_2(y,q))^{\chi(\oo_S)/2}}\right].
\end{align*}
Combined with \conjref{multcon} this leads to the following conjecture.

\begin{conj} We have 
$$\frac{H_m(y,q)}{q^{\binom{m+1}{2}}}\equiv \frac{B_2(y,q)^m}{B_1(y,q)} \mod q^{m+1}.$$
\end{conj}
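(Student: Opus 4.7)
My plan is to derive the congruence by comparing the two conjectural expressions for the same invariant $N^{(S,L),\delta}_m(y) = N^{(\widehat S, L-mE),\delta}(y)$: one via Conjecture \ref{multcon} applied to $(S,L)$ with a single multiplicity-$m$ condition at a torus-fixed point, and one via Conjecture \ref{genfun} applied directly to the blowup $(\widehat S, L-mE)$. The displayed identity preceding the statement already records the formula from Conjecture \ref{genfun} on $(\widehat S, L-mE)$; the remaining step is to match this against Conjecture \ref{multcon}.

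Fix a smooth projective toric $S$ with a torus-fixed point $p$, let $\pi\colon \widehat S \to S$ be the blowup at $p$ with exceptional divisor $E$, and choose $L$ a toric line bundle that is positive enough that both conjectures simultaneously apply. Concretely, Conjecture \ref{multcon} requires $L$ sufficiently ample relative to $\delta$ and $m$, while Conjecture \ref{genfun} on $(\widehat S, L-mE)$ requires $L-mE$ to be $\delta$-very ample on $\widehat S$, so that Conjecture \ref{torconj}(1) identifies its prediction with the tropical count. Using the standard blowup identities $K_{\widehat S}^2 = K_S^2 - 1$, $(L-mE)K_{\widehat S} = LK_S + m$, $\chi(L-mE) = \chi(L) - \binom{m+1}{2}$, and $\chi(\mathcal{O}_{\widehat S}) = \chi(\mathcal{O}_S)$, both formulas can be written with the same exponent $\chi(L)-1-\delta-\binom{m+1}{2}$ on $\widetilde{DG}_2$.

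The cleanest way to extract the consequence is to recast both sides in the generating-function form of Remark \ref{reform}(2), with $g(t)$ the compositional inverse of $\widetilde{DG}_2(y,q)$. The common factors $(t/g)^{\chi(L)-\binom{m+1}{2}}$, $B_2^{LK_S}$, $B_1^{K_S^2-1}$, and $(gg'/\widetilde\Delta(y,g))^{\chi(\mathcal{O}_S)/2}$ then cancel, and the two generating functions agree precisely when
\[
B_1(y, g(t))\, H_m(y, g(t)) \;=\; g(t)^{\binom{m+1}{2}}\, B_2(y, g(t))^m
\]
modulo $t^{\delta_{\max}+1}$, where $\delta_{\max}$ is the largest cogenus for which both conjectures apply. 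Substituting $q = g(t)$ and using that $t = q + O(q^2)$, this translates into
\[
B_1(y, q)\, H_m(y, q) \equiv q^{\binom{m+1}{2}}\, B_2(y, q)^m \pmod{q^{\delta_{\max}+1}},
\]
and dividing by $B_1(y,q)\cdot q^{\binom{m+1}{2}}$ gives the desired congruence mod $q^{\delta_{\max}+1-\binom{m+1}{2}}$.

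The main obstacle is controlling $\delta_{\max}$: the claim modulo $q^{m+1}$ corresponds exactly to $\delta_{\max} \geq m + \binom{m+1}{2}$. This is the threshold at which both ampleness hypotheses can be simultaneously met for a suitable $L$, namely taking $L$ positive enough on $S$ that $L - mE$ is $(m+\binom{m+1}{2})$-very ample on $\widehat S$. Pushing the congruence to higher orders in $q$ would require stronger versions of both underlying conjectures than are currently available; moreover, direct inspection for $m=1$ (where $H_1 = \widetilde{DG}_2$ while $q B_2/B_1$ already differs at order $q^3$ using the explicit $B_1, B_2$ listed in the paper) shows that the bound modulo $q^{m+1}$ is tight and cannot be upgraded to a full identity.
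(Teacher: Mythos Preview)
Your overall approach---comparing the formula of Conjecture~\ref{multcon} for $N^{(S,L),\delta}_m$ with that of Conjecture~\ref{genfun} applied to $(\widehat S,L-mE)$---is exactly the heuristic the paper uses; indeed the paper gives no proof at all, only the single sentence ``Combined with \conjref{multcon} this leads to the following conjecture.'' So as a derivation of the congruence \emph{conditional on the two earlier conjectures}, you are following the intended route.

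There is, however, a genuine error in your range analysis. You assert that one can take $L$ ample enough that $L-mE$ is $\bigl(m+\binom{m+1}{2}\bigr)$-very ample on $\widehat S$. This is impossible: since $L$ is pulled back from $S$ we have $(L-mE)\cdot E=-mE^2=m$, so the restriction of $L-mE$ to the exceptional curve $E\cong\P^1$ has degree exactly $m$, and hence $L-mE$ is never $(m+1)$-very ample, no matter how positive $L$ is on $S$. Thus the hypothesis of Conjecture~\ref{torconj}(1) only allows $\delta_{\max}=m$, not $\delta_{\max}=m+\binom{m+1}{2}$.

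Fortunately your argument can be repaired, because you also lost a factor of $\binom{m+1}{2}$ in the congruence bookkeeping. After cancelling the common unit factor $C$, what remains is
\[
\frac{B_1(y,g)\,H_m(y,g)}{g^{\binom{m+1}{2}}}\;\equiv\;B_2(y,g)^m \pmod{t^{\delta_{\max}+1}},
\]
and here the left side is already a genuine power series in $t$ (Conjecture~\ref{multcon} forces $H_m=O\!\bigl(q^{\binom{m+1}{2}}\bigr)$, since the $t^0$ coefficient of the generating function is $N^{(S,L),0}_m$). Substituting $q=g(t)$ directly turns this into $H_m(y,q)/q^{\binom{m+1}{2}}\equiv B_2(y,q)^m/B_1(y,q)\pmod{q^{\delta_{\max}+1}}$, with no loss of $\binom{m+1}{2}$ in the modulus. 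So $\delta_{\max}=m$ already yields the stated bound $q^{m+1}$, and this is precisely the bound dictated by the degree of $(L-mE)|_E$. Your detour of multiplying through by $g^{\binom{m+1}{2}}$ and then dividing by $q^{\binom{m+1}{2}}$ throws away exactly the precision that you then try to recover with an unattainable $\delta_{\max}$.
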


Thus, if  eventually one would find a way to explicitly determine the functions $H_m(y,q)$ for all $m$, this could give the unknown power series 
$B_1(y,q)$, $B_2(y,q)$ and thus complete the conjectural formulas of \cite{G},\cite{GS}.

It is natural to assume that the specialisation of \conjref{multcon} and also of the previous conjectures \conjref{singsurfconj}, \conjref{A1con} to $y=1$ hold for the usual Severi degrees $n^{(S,L),\delta}$ for projective algebraic surfaces, not just for toric surfaces.
Thus we get in particular the following generalisation of the original conjecture of \cite{G}.

Let $S$ be a projective algebraic surface with $A_1$-singularties $q_1,\ldots,q_s$. Let $p_1,\ldots p_r$ be distinct smooth points on $S$.
Let $m_1,\ldots,m_r\in \Z_{> 0}$, $n_1,\ldots,n_s\in \Z_{\ge  0}.$ Let $\widehat S$ be the blowup of $S$ in $q_1,\ldots,q_s,p_1,\ldots p_r$ and denote
$E_i$, $F_j$ the exceptional divisors over $q_i$, $p_j$ respectively. Let $L$ be a $\Q$-Cartier  Weil divisor on $S$, such that 
$\widehat L:=L-\sum_{i=1}^s m_i E_i-\sum_{i=1}^r n_i F_i$ is a Cartier divisor on $\widehat S$, which is $\delta$-very ample on all irreducible curves
in $\widehat S$ not contained in $E_1\cup\ldots\cup E_s\cup F_1\cup\ldots\cup F_r$.  
Denote $n^{(S,L),\delta}_{(m_1,\ldots,m_r),(n_1,\ldots,n_s)}:=n^{(\widehat S,\widehat L),\delta}$, which we could informally interpret as  the number of curves in $|L|$
which have multiplicity $m_i$ in $p_i$ and $n_j$ in $q_j$ for all $i,j$ and pass in addition through 
$$\dim|L|-\sum_{i=1}^r \binom{m_i+1}{2}-\sum_{j=1}^s \frac{n_j^2}{4}$$ general points on $S$, and have $\delta$ nodes as other singularities.

\begin{conj} \label{genconj} 
\begin{equation}
\begin{split}
n^{(S,L),\delta}_{(m_1,\ldots,m_r),(n_1,\ldots,n_s)}=\Coeff_{q^{(L^2-LK_S)/2}}
&\left[DG_2(q)^{\chi(L)-\sum_i \binom{m_i+1}{2}-\sum_{j} \frac{n_j^2}{4}-1}
\frac{B_1(q)^{K_S^2}B_2(q)^{LK_S}{D^2G}_2(q)}{(\Delta(q)\cdot {D^2G}_2(q))^{\chi(\oo_S)/2}}\right.\\
&\left.\left(\prod_{i=1}^r H_{n_i}(1,q)\right)\left(\prod_{i=1}^s f_{m_i}(q)\right)\right].
\end{split}
\end{equation}

\end{conj}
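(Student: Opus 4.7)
The plan is to assemble \conjref{genconj} out of three ingredients: the Göttsche--Yau--Zaslow formula on smooth surfaces (proved by Tzeng and by Kool--Shende--Thomas), the correction factor $H_m(1,q)$ of \conjref{multcon} attached to a smooth blowup with prescribed multiplicity $m$, and the correction factor $f_k(q)$ of \conjref{A1con} attached to an $A_1$-singularity with prescribed multiplicity $k$. Thus the proof is really a piece of bookkeeping that packages these three inputs into a single formula.

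First I would use the definitional identity $n^{(S,L),\delta}_{(m_1,\ldots,m_r),(n_1,\ldots,n_s)} = n^{(\widehat S,\widehat L),\delta}$ to transfer the entire problem to the smooth surface $\widehat S$, where the count is an ordinary Severi degree. I would then record the elementary invariants
\begin{align*}
K_{\widehat S}^2 &= K_S^2-r, & \widehat L\cdot K_{\widehat S} &= L\cdot K_S+\textstyle\sum_i n_i, & \chi(\oo_{\widehat S}) &= \chi(\oo_S),\\
\tfrac{1}{2}\widehat L(\widehat L-K_{\widehat S}) &= \tfrac{1}{2}(L^2-LK_S) - \textstyle\sum_j m_j^2 - \textstyle\sum_i \binom{n_i+1}{2},
\end{align*}
using that an $A_1$-resolution is crepant ($K_{\widehat S}\cdot E_j=0$, $E_j^2=-2$) while a blowup at a smooth point contributes a $(-1)$-curve with $K_{\widehat S}\cdot F_i=-1$.

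Granting sufficient ampleness of $\widehat L$ on $\widehat S$, the proven Göttsche--Yau--Zaslow formula applies to $n^{(\widehat S,\widehat L),\delta}$. Substituting the invariants above into that formula and reorganising, the factor $B_1(q)^{-r}$ coming from $K_{\widehat S}^2-K_S^2=-r$ and the factor $B_2(q)^{\sum n_i}$ coming from $\widehat LK_{\widehat S}-LK_S=\sum n_i$ regroup naturally around each exceptional divisor, while the shift of the extraction coefficient by $\sum m_j^2+\sum\binom{n_i+1}{2}$ produces a corresponding excess power of $DG_2$. The content of \conjref{genconj} is precisely that these local regroupings collapse into $\prod_i H_{n_i}(1,q)$ at the smooth points and $\prod_j f_{m_j}(q)$ at the $A_1$-points. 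At this point the consistency at the level of universal invariants is automatic once \conjref{multcon} and \conjref{A1con} are known; I would verify it by matching the $q$-expansions of both sides on $(\P^2,dH)$ and $(\Sigma_2,dH-kE)$, where those series have already been pinned down in \conjref{P2blow} and \propref{propsigman}.

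The main obstacle is precisely that $\widehat L$ is typically \emph{not} sufficiently ample on $\widehat S$: its intersection with each exceptional curve is small ($2m_j$ or $n_i$), so Tzeng's theorem does not apply directly and one cannot simply read off the answer from the smooth case. To bypass this, I would argue in two stages: (i) use an algebraic cobordism/degeneration argument in the spirit of Levine--Pandharipande and Tzeng to prove that $n^{(\widehat S,\widehat L),\delta}$ admits a decomposition into a product of a universal ``bulk'' factor (determined by the global intersection numbers $\widehat L^2,\widehat LK_{\widehat S},K_{\widehat S}^2,\chi(\oo_{\widehat S})$) and a ``local'' factor at each exceptional configuration depending only on its analytic type and the coefficient with which it appears in $\widehat L$; (ii) identify these local factors as $H_m(1,q)$ and $f_k(q)$ by matching the computations already done on $\Sigma_1$, $\Sigma_2$, and $\P(1,1,m)$ via the long-edge graph technology of \thmref{mainthm}. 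The genuine difficulty, and the reason \conjref{genconj} remains a conjecture, is stage (i): separating the contribution of each exceptional divisor from the rest of the surface when $\widehat L$ fails to be ample along it. Once that separation is known, the remainder of the proof is the tabulation of universal coefficients sketched above.
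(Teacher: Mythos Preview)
The paper does not prove this statement: it is explicitly a conjecture, presented as the common specialisation at $y=1$ of \conjref{multcon} and \conjref{A1con}, extrapolated from toric to arbitrary projective surfaces. No argument beyond this heuristic is offered, and there is nothing to compare your attempt against.

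Your proposal is likewise not a proof, and you correctly say so in the final paragraph. Two remarks on the write-up itself. First, the ingredients you assemble---\conjref{multcon} and \conjref{A1con}---are themselves open conjectures in the paper, verified only for small parameters; so even granting your separation step (i), the argument would remain conditional on those. Second, the deduction in your third paragraph does not quite say what you want: if $\widehat L$ really were sufficiently ample on $\widehat S$, Tzeng's theorem would give the G\"ottsche--Yau--Zaslow formula on $\widehat S$ with \emph{no} correction factors, and agreement with \conjref{genconj} would then hold only because $\overline f_l\equiv 1$ and $H_m/q^{\binom{m+1}{2}}\equiv B_2^m/B_1$ modulo the relevant power of $q$ (cf.\ \remref{fkkk} and the last conjecture of \S6). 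The factors $H_{n_i}(1,q)$ and $f_{m_j}(q)$ are by design the corrections that appear precisely when ampleness along the exceptional loci fails, so the entire content of \conjref{genconj} lives in your stage (i), not in the bookkeeping that precedes it. Your computations of $K_{\widehat S}^2$, $\widehat L K_{\widehat S}$ and $\tfrac12\widehat L(\widehat L-K_{\widehat S})$ are correct; be aware that the paper's own indexing of the $m_i$ and $n_j$ in the paragraph before \conjref{genconj} is internally inconsistent, so do not try to match it literally.
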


%\begin{rem} In \conjref{genconj} one might think that as $H_1(1,q)=DG_2(q)$, we could just rep
%\end{rem}

\end{document}